\theoremstyle{plain}
\newtheorem{teo}{Theorem}[section]
\newtheorem{cor}[teo]{Corollary}
\newtheorem{cosa}[teo]{Facts}
\newtheorem{prop}[teo]{Proposition}
\newtheorem{lema}[teo]{Lemma}
\theoremstyle{definition}
\theoremstyle{remark}
\newtheorem{nota}[teo]{Remark}
\numberwithin{equation}{section}
\renewcommand\theenumi{\@roman\c@enumi}\makeatother
\title{Normal holonomy of orbits and Veronese submanifolds}
\author{Carlos Olmos}
\author{Richar Fernando Ria\~ no-Ria\~no}
\address{Facultad de Matem\'atica, Astronom\'ia y F\'isica,
  Universidad Nacional de C\'ordoba, Ciudad Universitaria, 5000
  C\'ordoba, Argentina}
\email{\href{mailto:olmos@famaf.unc.edu.ar}{olmos@famaf.unc.edu.ar}
\ \ \ \
\href{mailto:riano@famaf.unc.edu.ar}{riano@famaf.unc.edu.ar}}
\thanks {{\bf Supported by:} FaMAF-Universidad Nacional de C\' ordoba and CIEM-Conicet.
\newline
\indent
{\bf MSC (2010):} Primary 53C40; Secondary 53C42, 53C39.
\newline
\indent
{\bf Key words:} {\it normal holonomy, orbits of s-representations,
Veronese submanifolds}}
\date {\today}
\begin{document}

\maketitle

\begin {abstract}
 It was conjectured, twenty years ago,
the following result that
would generalize the so-called rank rigidity theorem for
homogeneous Euclidean submanifolds: let $M^n$, $n\geq2$, be
a full and irreducible  homogeneous
submanifold of the sphere $S^{N-1}\subset \mathbb {R}^N$
 and such that the normal
holonomy group is not transitive (on the unit sphere of the
normal space to the sphere). Then $M^n$ must be an orbit of an
irreducible
$s$-representation (i.e. the isotropy representation of a semisimple
Riemannian symmetric space).

If $n=2$, then the normal holonomy is always transitive,
unless $M$ is a homogeneous isoparametric hypersurface
of the sphere (and so the conjecture is true in this case).
We prove the conjecture when $n=3$.
In this case $M^3$ must be either isoparametric or
a Veronese submanifold.
The proof combines geometric arguments with
(delicate) topological arguments that uses information from
two different fibrations  with the same total space
(the holonomy tube and the caustic fibrations).

We also prove the conjecture   for $n\geq 3$ when the normal holonomy
acts irreducibly and the codimension is the maximal possible
$ \frac 1 2 n(n+1)$. This gives a characterization of Veronese submanifolds
in terms of normal holonomy.
We also extend this last result by replacing the homogeneity assumption
by the assumption of minimality (in the sphere).

Another result of the paper, used for the case $n=3$, is that the number of
irreducible factors of the local normal holonomy group, for any
Euclidean submanifold $M^n$, is less or equal than  $[\frac n 2]$ (which
is the rank of  the orthogonal group $\text {SO}(n)$).
This bound is sharp
and improves the known bound $\frac 1 2 n(n-1)$.

\end {abstract}

\section {Introduction}

The holonomy of the normal connection
turns out to be a useful tool in Euclidean
submanifold geometry \cite {BCO}. The most important
applications
of this tool
were the alternative  proof of Thorbergsson theorem
\cite {Th}, given in
\cite {O2},
 and   the rank rigidity theorems for
 submanifolds \cite {O3, CO, DO}
(see Section 2.1). Moreover, the extension of Thorbergsson's result
to infinite dimensional geometry,  given by \cite {HL},
makes also use
of normal holonomy.

It is interesting to remark that
normal holonomy is related, in a very subtle way, to
Riemannian holonomy. Namely, by using submanifold geometry,
with normal holonomy ingredients, one can give short and geometric proofs
of both Berger holonomy theorem \cite {B} and Simons
holonomy (systems) theorem \cite {S} (see \cite {O4, O5}). Moreover,
by applying this
methods, it was proved in \cite {OR} the so-called skew-torsion holonomy
theorem with applications to naturally reductive spaces.

The starting point  for this theory was the
normal holonomy theorem
\cite {O1} which asserts that the (restricted) normal
holonomy group representation,
of
 a submanifold of a space form, is, up to a trivial factor, an $s$-representation.
(equivalently,  the normal holonomy is a Riemannian non-exceptional
holonomy). This implies that the so-called principal holonomy tubes have
flat normal bundle (holonomy tubes are the image, under the normal exponential map,
of the holonomy subbundles of the normal bundle). Such tubes, despite to the classical
spherical tubes, behaves nicely with respect to products of submanifolds.

But the normal holonomy,  which is invariant under conformal transformations of the
ambient space,
 gives much weaker information in
submanifold geometry than the Riemannian
holonomy in Riemannian geometry.
For instance, the reducibility of the normal holonomy representation
does not imply that the manifold splits. So, interesting applications of
the normal holonomy can be expected only within a
restrictive class of submanifolds. For instance:

 ({\it 1}) submanifolds with constant principal curvatures,

 ({\it 2}) complex submanifolds of the complex projective space

({\it 3}) homogeneous submanifolds.

\,

For the first two classes of submanifolds there are ``Berger-type" theorems.

For (1) one has  the following reformulation of the Thorbergsson
theorem \cite {Th}:
\it a full and irreducible  submanifold with constant principal curvatures,
such that the normal holonomy, as a submanifold of the sphere, is non-transitive
must be  either a inhomogeneous isoparametric hypersurface or an orbit of an
$s$-representation. \rm

For (2) we have the following result \cite {CDO}: \it a complete
full and irreducible
complex submanifold $M$ of the complex projective space with non-transitive
normal holonomy is  the complex orbit (in the projectivized tangent space) of the
isotropy representation of a Hermitian symmetric space or, equivalently,
$M$ is extrinsically symmetric \rm . This result is not true without the completeness
assumption.

For the class (3)  we have the rank rigidity theorem for submanifolds \cite {O3, DO}:
\it if the normal holonomy  of a full and irreducible Euclidean
homogeneous submanifold $M^n =K.v$, $n\geq 2$
 has a fixed non-null
vector, then $M$ is contained in a
sphere. If the dimension of the fixed set of the normal
holonomy has dimension at least
$2$, then $M$ is an orbit of an $s$-representation
(perhaps by enlarging the group $K$). \rm

But this last result would be only a particular case of a Berger-type result
that it was conjectured twenty years ago in \cite {O3}: \it if the normal holonomy
of a full and irreducible homogeneous submanifold $M^n$
 of the sphere,  $n\geq 2$,   is
non-transitive then $M$ is an orbit of an $s$-representation. \rm

For $n=2$ the normal holonomy must be always transitive or trivial
(see \cite {BCO}, Section 4.5 (c)).

The goal of this article is twofold. On the one hand, to give some progress on
this conjecture. On the other hand, to characterize the classical (Riemannian)
Veronese submanifolds in terms of normal holonomy.

If a submanifold $M^n$ of the sphere has irreducible
and non-transitive normal holonomy, then the
first normal space, as a Euclidean submanifold,
coincides  with the normal space
(see Remark \ref {correction}). This imposes the restriction
 that the codimension is at most
$\frac 1 2n(n+1)$. We will prove the above mentioned conjecture in the
case that the normal holonomy acts irreducibly  and
the (Euclidean) codimension is the maximal one
 $\frac 1 2n(n+1)$. The proof uses most of  the techniques  of the theory.
Moreover, the most difficult case is in dimension $n=3$ for which we have
to use also
 delicate topological arguments involving two different fibrations on
a partial holonomy tube: {\it the holonomy tube fibration} and {\it
the caustic fibration}.

\,

We extend these resuls by replacing the homogeneity by the property that
the submanifold is minimal in a sphere. But the proof of this result is simpler
than  the homogeneous case and a  general
proof works also for $n=3$.

\,

We also prove the sharp bound $\frac n 2$ on the number of irreducible
factors of the normal holonomy, which implies, from the above mentioned result,
the conjecture for $n=3$ (see Proposition \ref {cota}).

Let us explain our main results which are related to the so-called
Veronese submanifolds.

The isotropy representation of the
symmetric space $\text {Sl}(n+1)/
\text {SO}(n+1)$ is naturally  identified with the action of
$\text {SO}(n+1)$, by conjugation,  on the traceless
 symmetric matrices. A Veronese (Riemannian)
submanifold $M^n$, which has
parallel second fundamental form,  is
the orbit of a matrix with exactly two eigenvalues, one of which
has multiplicity $1$. Being $M$ a submanifold with constant principal curvatures,
the first normal space $\nu ^1 (M)$ coincides with the normal space $\nu (M)$.
Moreover, $\nu ^1(M)$ has maximal dimension. Namely, the codimension of $M$ is
$\frac {1}{2}n(n+1)$.

The restricted normal holonomy of $M$, as a submanifold
of the sphere, is the image, under the slice representation, of the
(connected) isotropy. Then the normal holonomy representation of $M$ is
 irreducible and  it is  equivalent to the isotropy representation of
 $\text {Sl}(n)/\text {SO}(n)$. So, the normal holonomy of $M$ is non-transitive
if and only if $n\geq 3$.
We have the following geometric
characterization of Veronese submanifolds in terms of
normal holonomy, which proves a special case of the conjecture on normal
holonomy of orbits, when the  normal holonomy, of a submanifold of the sphere,
 acts irreducibly, not transitively
 and
the codimension is maximal.

\,

{\bf Theorem A.} \it Let $M^n  \subset S ^ {n -1 + \frac 1 2 n(n+1)}$,
$n\geq 3$,
be a
homogeneous submanifold of the sphere.
Then $M$ is a (full) Veronese submanifold if and only if
 the restricted
normal holonomy group of $M$
acts irreducibly and not transitively.

\rm

\

For dimension $3$  the conjecture on normal holonomy is true.
Namely,

\,

{\bf Theorem B.} \it
Let $M^3  \subset S ^ {N-1}$
be a full  irreducible
homogeneous $3$-dimensional submanifold of the sphere.
 Assume  that the restricted
normal holonomy group
of $M$ is non-transitive. Then $M$ is an orbit of an $s$-representation.
Moreover,
 $M$ is either
 a principal orbit of the isotropy representation of
\, $\text {Sl}(3)/\text {SO}(3)$\,   or a Veronese  submanifold.
\rm

\

The irreducibility and fullness condition on $M$ is always with respect to
the Euclidean ambient space.

We can replace, in Theorem A,  the homogeneity condition
by the assumption of minimality in the sphere.

\,

{\bf Theorem C.} \it
Let $M^n$, $n\geq 3$, be a complete
(immersed)
submanifold of the sphere $S^{n-1 + \frac {1}{2}n(n+1)}$.
Then $M^n$ is, up to a cover, a (full) Veronese
submanifold if and only if
$M$ is a minimal submanifold
and
 the restricted
normal holonomy group
 acts irreducibly and not transitively.
\rm

\,

The assumptions of homogeneity or minimality, in our main results,
cannot be dropped, since a conformal (arbitrary) diffeomorphism of the
sphere transforms $M$ into a submanifold with the same
normal holonomy but in general  not any more minimal.
Last theorem admits a local version.

\,

We will explain the main ideas in the proof of Theorem A, when
$n\geq 4$.

Let $\tilde A$ be the traceless shape operator of $M = H.v$, i.e.
$\tilde A_\xi = A_\xi - \frac 1 n \langle H , \xi \rangle Id$,
where $H$ is the mean curvature vector. Let us consider the
 map $\tilde A$, from the normal space
$\bar {\nu} _q (M)$ to sphere into the traceless symmetric endomorphisms
$Sim _0 (T_qM)$. Then $\tilde A$ maps normal spaces to the $\Phi (q)$-orbits into
normal spaces to the $\text {SO}(n)$-orbits, by conjugation, in $Sim _0 (T_qM)$.
By using the results in Section 2, which are related to Simons theorem, we obtain
that $\tilde A$ is a homothecy which maps the normal holonomy group $\Phi (q)$
into $\text {SO}(n)$. This implies that the eigenvalues of $\tilde A_{\xi}$
do not change
if $\xi$ is parallel transported along a loop.
From the homogeneity, since the group $H$ is always inside
the $\nabla ^\perp$-transvections, we obtain that the eigenvalues of
$\tilde A _{\xi (t)}$ are constant, if $\xi (t)$ is a parallel normal field along a
curve. Now we pass to an appropriate,  singular, holonomy tube, $M_\xi$, where
$A_\xi$ has exactly two eigenvalues one of them of multiplicity $2$.
 Let
$\hat {\xi}$ be the parallel normal field of $M_\xi$ such that $M$ coincides with
the parallel focal manifold $(M_\xi)_{-\hat {\xi}}$ to $M_\xi$.
One obtains that the  three eigenvalue functions,
$\hat {\lambda} _1, \hat {\lambda} _2$ and $\hat {\lambda} _3 = -1$,
 of the shape operator $\hat A _{\hat {\xi}}$ of $M_\xi$ have constant multiplicities.
The two horizontal eigendistributions of
$\hat A _{\hat {\xi}}$, let us say
 $E_1$ and $E_2$, have multiplicities $2$ and $(n-2)$ respectively.
The vertical
distribution is the eigendistribution associates to the constant eigenvalue $-1$.
From the above mentioned properties of $\tilde A$ and the tube formulas one obtains
that $\hat{\lambda} _1$ and $\hat{\lambda} _2$ are functionally related (so if one eigenvalue
is constant along a curve the other is also constant).
From the Dupin condition, since $\text {dim} (E_1)\geq 2$,
 $\hat{\lambda} _1$, and so $\hat{\lambda} _2$, as previously
remarked, are constant along the integral manifolds of $E_1$.
If $n\geq4$, the the same is
true for the distribution $E_2$. So, the eigenvalues of
$\hat A _{\hat {\xi}}$
are constant along horizontal curves. But any two points in a holonomy tube
can be joined by a horizontal curve. Then  $\hat A _{\hat {\xi}}$ has constant
eigenvalues and so $\hat {\xi}$ is an isoparametric non-umbilical parallel
normal field.
Then,  by the
isoparametric rank rigidity theorem, the holonomy tube $M_\xi$, and
therefore $M$, is an orbit of an $s$-representation.
From this we prove, without using classification results, that $M$
must be a a Veronese submanifold.

If  $n=3$, the proof is much harder, since the Dupin condition
does not apply for $E_2$, and requires
topological arguments, not valid for $n >3$, as pointed out before.

\

\section {Preliminaries and basic facts}

In this section, as well as in the appendix,
for the reader convenience, we  recall the basic notions
and results that are needed in this article. We also include in this part
some new results that  are auxiliary for our purposes. Some of them
have a small interest  in its own right, or the proofs are different from
the standard ones.

The general reference for this section is \cite {PT, Te, BCO}.

\subsection {Orbits of $s$-representations and  Veronese submanifolds.}

\

A submanifold  $M\subset \mathbb {R}^N$ has \it constant principal
curvatures \rm if the shape operator $A_{\xi (t)}$ has constant eigenvalues,
for any $\nabla ^\perp$-parallel normal vector field $\xi (t)$ along any
arbitrary  (piece-wise differentiable) curve $c(t)$ in $M$.
If, in addition, the normal bundle of $\nu (M)$ is flat, then $M$ is called
isoparametric.

A submanifold $M$ with constant principal curvatures (extrinsically) splits as
$M = \mathbb {R}^k \times M'$, where $M'$ is  compact and
contained in a sphere.

The (extrinsic) homogeneous isoparametric submanifolds are exactly the principal
orbits of polar representations \cite {PT}. The other orbits have
constant principal curvatures (and, in particular, this family of orbits contains
the submanifolds with parallel second fundamental form). But it is
 not true that  all homogeneous submanifolds
with
constant principal curvatures are orbit of polar representations (there exists
a homogeneous
focal parallel manifold to an inhomogeneous isoparametric hypersurface of the sphere
\cite {FKM}). It turns out, from Dadok's classification \cite {Da},
that polar representations are orbit-like equivalent to the
so-called $s$-representations, i.e. the isotropy representations of semisimple
simply connected Riemannian symmetric spaces. So, a full and homogeneous
(not contained in a proper affine
subspace) Euclidean submanifold $M$
is isoparametric
if and only if it is a principal orbit of an $s$-representation.
It is interesting to remark that there is a
classification free proof \cite {EH},
for
cohomogeneity different from 2, of the fact that any  polar representation is
orbit-like to an $s$-representation.

One has the following remarkable result

\begin {teo} \rm (Thorbergsson, \cite {Th, O3}). \it
 A compact full irreducible  isoparametric Euclidean submanifold of
codimension at least $3$
is homogeneous {\rm (and so the orbit of an irreducible $s$-representation)}.
\end {teo}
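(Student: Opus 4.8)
The plan is to deduce homogeneity from the normal holonomy theorem by exhibiting a group of extrinsic isometries acting transitively on $M$; this is the normal-holonomy strategy of \cite{O3}, as opposed to Thorbergsson's original argument via Tits buildings. Since $M$ is isoparametric its normal bundle is globally flat, so by parallel transport one obtains globally defined $\nabla^\perp$-parallel curvature normals $n_0,\dots,n_g\in\nu(M)$ together with the associated integrable, autoparallel curvature distributions $E_0,\dots,E_g$, whose integral manifolds are round spheres. Fullness and irreducibility ensure that the $n_i$ span the normal space, and the hypothesis that the codimension be at least $3$ is what will guarantee that enough of these curvature spheres have dimension $\geq 2$.

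First I would, for each index $i$ with $n_i\neq 0$, pass to the focal manifold $M_i$ obtained as the image of the endpoint map $q\mapsto q+n_i(q)$, and recall that $M$ is recovered as a principal holonomy tube over $M_i$: namely $M$ is the image, under the normal exponential map, of a holonomy subbundle of $\nu(M_i)$, the tube fibres being precisely the orbits of the normal holonomy group $\Phi_i$ of $M_i$ at the relevant radius. Unlike $M$ itself, the focal manifold $M_i$ has non-flat normal bundle, so $\Phi_i$ is nontrivial.

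Next I would apply the normal holonomy theorem to each $M_i$: the restricted normal holonomy representation of $\Phi_i$ is, up to a trivial summand, an $s$-representation, while the fibre of the tube $M\to M_i$ is by construction a single $\Phi_i$-orbit, a round curvature sphere of $M$. The decisive step is to produce, from the $\Phi_i$-action together with $\nabla^\perp$-parallel transport in $\nu(M_i)$, extrinsic isometries of $\bbr^N$ that preserve $M$ and act transitively along each such fibre. Letting $i$ range over all admissible focal directions and taking the group $G$ generated by these isometries, one obtains a subgroup of the isometries of $\bbr^N$ preserving $M$.

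The hard part will be to prove that $G$ acts transitively on $M$, which is exactly the content of the theorem and is where codimension $\geq 3$ is indispensable: one must show that successive motions along the curvature spheres (each of dimension $\geq 2$, so that the corresponding $\Phi_i$ is rich enough) connect any two points of $M$. This is precisely the place where the inhomogeneous isoparametric hypersurfaces (codimension $1$) and the inhomogeneous codimension-$2$ examples fail to be excluded, so it is the genuine obstacle; the normal holonomy theorem supplies the transitivity along individual fibres, but assembling these local motions into global transitivity is the crux. Once $G$ is shown to be transitive, $M$ is extrinsically homogeneous, and then, being a homogeneous isoparametric submanifold, it is a principal orbit of an irreducible $s$-representation by the characterization recalled above.
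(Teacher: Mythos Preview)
The paper does not prove this theorem; it is stated as a known result with references to Thorbergsson's original building-theoretic proof and to the normal-holonomy approach. So there is no in-paper argument to compare against, and what follows are comments on your sketch as an outline of the latter.

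Your architecture is correct and matches the strategy of \cite{O2}: pass to each focal manifold $M_i$, observe that $M$ sits over $M_i$ as a holonomy tube whose fibres are the curvature spheres $S_i$, and invoke the normal holonomy theorem on $M_i$ to see that each fibre is (an open piece of) an orbit of an $s$-representation. You also correctly isolate the codimension hypothesis as the place where the inhomogeneous examples are excluded.

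There is, however, a genuine gap at what you call the ``decisive step''. The normal holonomy group $\Phi_i$ acts on the normal space $\nu_p(M_i)$, not on the ambient $\mathbb{R}^N$, and there is no a priori reason why an element of $\Phi_i$ should extend to an isometry of $\mathbb{R}^N$ preserving $M$; indeed, such an extension is essentially equivalent to what you are trying to prove. In \cite{O2} this is \emph{not} done by directly lifting $\Phi_i$ to ambient isometries. Instead, one uses that each fibre $S_i$, being an orbit of an $s$-representation, carries a canonical homogeneous structure (a connection with parallel torsion and curvature), and one shows that these fibrewise structures assemble into a global Ambrose--Singer homogeneous structure on $M$. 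Extrinsic homogeneity then follows. So the hard part is not, as you write, the transitivity of a group $G$ already in hand---once one has isometries sliding along every $S_i$, irreducibility of the Coxeter diagram gives transitivity almost for free---but rather the construction of any such isometries at all, and this goes through the homogeneous-structure detour rather than a direct lift.

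A smaller point: your explanation of the role of codimension $\geq 3$ is slightly off. It is not that ``enough'' curvature spheres have dimension $\geq 2$; the actual mechanism is that rank $\geq 3$ of the Coxeter group forces \emph{all} multiplicities $m_i \geq 2$, and this uniformity is what makes the fibrewise homogeneous structures compatible enough to glue.
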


The {\it rank}  at $p$, of a Euclidean submanifold $M$, $\text {rank}_p(M)$,
is
the maximal number of linearly independent parallel normal fields, locally defined
around $p$. The rank of $M$, $\text {rank}(M)$, is the minimum, over $p\in M$, of
$\text {rank}_p(M)$. If $M$ is homogeneous then $\text {rank}_p(M) =
\text {rank}(M)$, independent of $p\in M$.
The submanifold $M$ is said to be of {\it higher rank} if
its rank is at least $2$.

One has the following important result.

\begin {teo}\label {rank rigidity}
\rm (Rank Rigidity for Submanifolds, \cite {O3,O6, DO, BCO})
\it Let $M^n$, $n\geq 2$, be a Euclidean homogeneous
submanifold which is full and irreducible. Then,

\rm (a) \it If $\text {rank} (M)\geq 1$, if and only if
 $M$ is contained in a sphere.

\rm (b) \it If $\text {rank} (M)\geq 2$, then $M$ is an
orbit of an $s$-representation.

\end {teo}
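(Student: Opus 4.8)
The plan is to separate the two parts, using throughout the homogeneity in the strong form recorded in the paper: the presentation group of $M=K\cdot v$ may be taken inside the $\nabla^\perp$-transvection group, so that parallel transport of normal vectors along any curve is realized by an element of $K$; in particular every $\nabla^\perp$-parallel normal field $\xi$ has a shape operator $A_{\xi(t)}$ with eigenvalues independent of $t$. For the easy direction of (a), if $M\subset S^{N-1}(r)$ is centered at $c$ then $\zeta:=c-p$ is normal, umbilical with $A_\zeta=\mathrm{Id}$, and $\nabla^\perp$-parallel, so $\text{rank}(M)\geq1$. For the converse I would use the elementary characterization that $M$ lies in a sphere exactly when it carries a parallel normal field $\zeta$ with $A_\zeta=\mathrm{Id}$: such a $\zeta$ gives $\bar\nabla_X(p+\zeta)=X-A_\zeta X=0$, so $p+\zeta\equiv c$ is constant and $|p-c|$ is constant.

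Thus, given an arbitrary nonzero parallel normal field $\xi$, the task is to manufacture an umbilical one. The flat subbundle $\nu^0$ spanned by the parallel normal fields carries a commuting family of shape operators $\{A_\eta\}_{\eta\in\nu^0}$ (flatness together with the Codazzi and Ricci equations) whose eigenvalues are constant by the transvection property; these are simultaneously diagonalizable and yield a common, constant-rank eigendistribution decomposition of $TM$. A dimension count against fullness and irreducibility then singles out a direction $\eta\in\nu^0$ whose shape operator is a nonzero multiple of the identity, producing the required umbilical field and hence the center $c$.

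For (b) I assume $\text{rank}(M)\geq2$. By (a) we may place $M$ in a sphere, with the radial field one parallel normal field; choose a second parallel field $\xi$ pointwise independent of it. By the normal holonomy theorem the parallel fields are fixed vectors of a normal holonomy representation that is, up to a trivial factor, an $s$-representation, so $\dim\nu^0\geq2$ and $M$ has constant principal curvatures in the flat directions $\nu^0$. I would then pass to a suitable (partial) holonomy tube $M_\xi$ along the non-flat complement of $\nu^0$: by the normal holonomy theorem this tube has flat normal bundle, it is again homogeneous, and it inherits constant principal curvatures in all normal directions from the constancy of the eigenvalue data of $M$, so that $M_\xi$ is isoparametric.

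Once $M_\xi$ is exhibited as a compact, full, irreducible isoparametric submanifold of codimension at least $3$, Thorbergsson's theorem (Theorem 2.1) together with the Palais--Terng and Dadok descriptions forces it to be a principal orbit of an irreducible $s$-representation; transporting this back along the tube fibration then identifies $M$ itself as an orbit of an $s$-representation, after possibly enlarging $K$. The step I expect to be the main obstacle is exactly this passage from higher rank to an isoparametric tube: one must exclude the homogeneous focal manifolds of inhomogeneous isoparametric hypersurfaces (the FKM examples), which have constant principal curvatures but are not orbits of $s$-representations. Ruling these out is where the hypothesis $\text{rank}(M)\geq2$ (rather than merely $\geq1$) and the realization of $K$ inside the $\nabla^\perp$-transvection group must be used decisively, to guarantee simultaneously the flatness of the tube's normal bundle and its homogeneity.
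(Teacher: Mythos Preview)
The paper does not prove Theorem~\ref{rank rigidity}; it is quoted from \cite{O3,O6,DO,BCO} as background. So there is no ``paper's own proof'' to compare against, and your proposal must stand on its own. It does not.

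\medskip

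\textbf{Part (a).} The sentence ``a dimension count against fullness and irreducibility then singles out a direction $\eta\in\nu^0$ whose shape operator is a nonzero multiple of the identity'' is not an argument. When $\text{rank}(M)=1$ the flat subbundle $\nu^0$ is a line, so there is no freedom to ``single out'' anything: the unique parallel direction $\xi$ is what you are given, and a priori $A_\xi$ need not be umbilical. What actually forces umbilicity is that a \emph{non}-umbilical parallel isoparametric field on a full irreducible submanifold already implies constant principal curvatures (this is the content of Theorem~\ref{global isoparametric}), whence $M$ splits as $\mathbb{R}^k\times M'$ with $M'$ in a sphere; irreducibility then gives $k=0$. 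Your outline never invokes this mechanism, and the ``dimension count'' placeholder does not replace it.

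\medskip

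\textbf{Part (b).} The decisive gap is the assertion that the holonomy tube $M_\xi$ ``is again homogeneous.'' The group $K$ acts on $M_\xi$ by extrinsic isometries, but transitivity on $M_\xi$ would require the isotropy $K_v$ to act transitively on the normal holonomy orbit $\Phi(v)\cdot\xi$, and that is precisely what characterizes orbits of $s$-representations (Theorem~\ref{slice holonomy}); you are assuming the conclusion. The paper itself, in the proof of Theorem~\ref{3-Veronese}, explicitly splits into the cases $H\cdot(p+\xi)=M_\xi$ and $H\cdot(p+\xi)\subsetneq M_\xi$, and the entire difficulty lies in the second case. A second problem: even granting that $M_\xi$ is isoparametric, its Euclidean codimension equals $\dim\nu^0$, which under your hypothesis is only $\geq 2$. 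Thorbergsson's theorem requires codimension $\geq 3$, so the rank-$2$ case (isoparametric hypersurfaces of spheres) is not covered by your argument, and this is exactly where inhomogeneous isoparametric examples live. The actual proofs in \cite{O3,DO} proceed quite differently and do not route through Thorbergsson in the way you suggest.
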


A parallel normal field $\xi$ of $M$ is called {\it isoparametric} if
the shape operator $A_\xi$ has constant eigenvalues.
If the shape operator $A_\xi$, of a parallel isoparametric  normal field,
is umbilical, i.e.  a multiple $\lambda$ of the identity,
then $M$ is contained in a sphere,
if $\lambda \neq 0$, or $M$ is not full, if $\lambda = 0$.

One has the following result (see, \cite {BCO},
Theorem  5.5.2 and Corollary 5.5.3).

\begin {teo}\label {local isoparametric}
\rm (isoparametric local rank rigidity, \cite {CO}).
 \it Let $M^n$ be a full (local) and locally irreducible
submanifold
of $S^{N-1} \subset \mathbb {R}^N$ which admits a non-umbilical
parallel isoparametric normal field. Then $M$ is an inhomogeneous
isoparametric hypersurface or $M$ is (an open subset of)
an orbit of an $s$-representation.
\end {teo}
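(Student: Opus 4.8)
The plan is to feed the given isoparametric field $\xi$ together with the radial normal field of the sphere into a focalization argument, reduce $M$ (by induction on dimension) to a holonomy tube over a simpler focal manifold, and then recover homogeneity so that the homogeneous rank rigidity theorem applies. First I would record the basic normal-algebra. Writing $\eta$ for the position (radial) normal field of $M\subset S^{N-1}$, which is $\nabla^\perp$-parallel and umbilical with $A_\eta=c\,\Id$, $c\neq 0$, and noting that $A_{a\xi+b\eta}=aA_\xi+bc\,\Id$ for constants $a,b$, the flat rank-two subbundle $\mathcal L=\mathrm{span}\{\xi,\eta\}$ of $\nu(M)$ consists entirely of isoparametric parallel directions; in particular $\mathrm{rank}(M)\ge 2$. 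Since $\xi$ is non-umbilical, $A_\xi$ has $g\ge 2$ distinct constant eigenvalues, and its eigendistributions $E_1,\dots,E_g$ are smooth of constant rank $m_i$. Using the Codazzi equation together with the parallelism of $\xi$ and the constancy of its eigenvalues, each $E_i$ is integrable with totally umbilical, extrinsically spherical leaves, so that the focal map along $\xi$ is well behaved.

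The geometric engine is then the focalization. Choosing an eigenvalue $\lambda_i\neq 0$ (which exists as $g\ge 2$) and collapsing the round leaves of $E_i$ along $\xi$ produces a focal submanifold $W$ of dimension $n-m_i$, and $M$ is recovered as a sphere bundle over $W$. Passing to holonomy subbundles exhibits $M$ as a holonomy tube over $W$, to which the normal holonomy theorem \cite{O1} applies, controlling the normal bundle of the tube and its restricted normal holonomy. The tube formulae transport the rank-two isoparametric structure of $M$ down to $W$, so that $W$ again carries (a non-umbilical) isoparametric parallel structure of strictly smaller dimension and the hypotheses are inherited; this sets up an induction on $\dim M$.

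At the bottom of the induction the focal data becomes rigid and one shows, treating the low-dimensional base cases directly, that $W$ is (locally) an orbit of an $s$-representation. Since a holonomy tube over a homogeneous submanifold is again homogeneous (see \cite{BCO}), it follows that $M$ is locally homogeneous, hence an open subset of a full, irreducible, homogeneous submanifold of rank $\ge 2$. The homogeneous rank rigidity theorem, Theorem \ref{rank rigidity}(b), then forces $M$ to be an open subset of an orbit of an $s$-representation. The one exceptional configuration is when the reduction lands on an isoparametric hypersurface of the sphere (codimension two in $\mathbb{R}^N$), where Thorbergsson's theorem \cite{Th} no longer forces homogeneity; local irreducibility and fullness exclude the remaining degenerate possibilities and leave precisely the inhomogeneous isoparametric hypersurface as the alternative.

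The step I expect to be the main obstacle is the focalization–reduction and its induction. The focal map is only locally defined and may be singular, so one must verify that $W$ is a genuine immersed submanifold, that $M$ is honestly a holonomy tube over it, and, most delicately, that enough of the isoparametric structure descends to $W$ for the induction to close. Equally subtle is the assembly of the base cases and the precise isolation of the hypersurface situation that yields the inhomogeneous alternative rather than homogeneity, together with the passage from the purely local homogeneity to a setting in which Theorem \ref{rank rigidity} (and, where needed, the compactness hypotheses of Thorbergsson's theorem) can actually be invoked.
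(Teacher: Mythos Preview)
The paper does not prove this theorem; it is quoted from \cite{CO} (see also \cite{BCO}, Theorem 5.5.2) as a background result, so there is no in-paper proof to compare against. What follows addresses your proposal on its own merits.

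Your scheme has a structural circularity and a genuine gap at the inductive step. The circular part: you aim to conclude by invoking Theorem~\ref{rank rigidity}(b), but that theorem needs $M$ to be \emph{homogeneous} to begin with, which is exactly what the local theorem is meant to supply without assuming. Your workaround is to push homogeneity down to the focal manifold $W$ by induction and then lift it back via ``a holonomy tube over a homogeneous submanifold is again homogeneous''. That lifting step is false for general homogeneous $W$: the group $H$ with $W=H.w$ acts on the tube but need not act transitively (this very failure is the content of Case~(2) in the proof of Theorem~\ref{3-Veronese}). It is only when $W$ is already an orbit of an $s$-representation that Theorem~\ref{slice holonomy} gives $\Phi(w)=(K_w)_0$ on $\nu_w(W)$ and hence transitivity of $K$ on each holonomy tube; but then you would already be done and the final appeal to Theorem~\ref{rank rigidity} is superfluous.

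The real obstacle, which you yourself flag, is that the induction does not close: you must show that $W$ again admits a \emph{non-umbilical} parallel isoparametric normal field and remains full and locally irreducible. None of this is automatic. After focalizing along an eigenvalue of $A_\xi$, the normal bundle of $W$ acquires the collapsed $E_i$-directions, and the shape operators of $W$ in those directions are governed by the \emph{full} second fundamental form of $M$, not just by $A_\xi$; constancy of the eigenvalues of $A_\xi$ alone does not force $W$ to carry an isoparametric section. Nor is it clear that $M$ is recovered as a \emph{holonomy} tube of $W$ rather than merely a (partial) tube: this requires identifying the fibre $S^{m_i}$ with a normal-holonomy orbit of $W$, which again needs control you have not established. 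The actual argument in \cite{CO} does not proceed by this dimension induction; it works directly with the curvature normals and the adapted normal curvature tensor to propagate constancy of all principal curvatures, and only then reads off the dichotomy via Thorbergsson in codimension $\ge 3$.
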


One has also a global version of the above result (see
\cite {DO} Theorem 1.2 and \cite {BCO}, Section 5.5 (b)).

\begin {teo}\label {global isoparametric} \rm
(isoparametric rank rigidity, \cite {DO}).
\it
  Let $M^n$  be a connected, simply connected and complete
Riemannian manifold and let
$f : M \to \mathbb {R^N}$ be an irreducible isometric immersion.
If there exists
a non-umbilical isoparametric parallel normal section
then $f : M \to \mathbb {R^N}$ has constant
principal curvatures {\rm (an so, if $f(M)$ is not an isoparametric
hypersurface of a sphere,
then it is an orbit of an $s$-representation)}.

\end {teo}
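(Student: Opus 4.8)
The plan is to deduce the global statement from its local counterpart, Theorem~\ref{local isoparametric}, the genuinely global inputs being the reduction to a sphere, the passage from global to local irreducibility, and the monodromy-free globalization of the local data; these are exactly the points where completeness and simple connectedness are used. First I would show that $f(M)$ is contained in a metric sphere $S^{N-1}$. The parallel normal section $\xi$ gives $\operatorname{rank}(M)\geq 1$, and since $\xi$ is a parallel field, the Codazzi equation makes $A_\xi$ a Codazzi tensor with constant eigenvalues $\mu_1,\dots,\mu_g$; its eigendistributions are therefore smooth, integrable and have spherical leaves. Because $\xi$ is non-umbilical there are at least two distinct eigenvalues, so this is a nontrivial parallel isoparametric structure. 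Integrating the distributions with the help of completeness and examining the associated parallel and focal manifolds $f+s\xi$, irreducibility rules out the cylinder and cone configurations, and the only surviving possibility is that $f(M)$ lies in a sphere. This is the non-homogeneous analogue of Theorem~\ref{rank rigidity}(a), with homogeneity replaced by completeness.

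Next I would verify the hypotheses of Theorem~\ref{local isoparametric} on small pieces. Local fullness holds on a dense open set by global fullness. For local irreducibility, suppose an open set $U$ split extrinsically as a Moore product, i.e. carried complementary autoparallel distributions on which the second fundamental form decouples; being parallel, these distributions extend by parallel transport, and simple connectedness removes any holonomy obstruction to extending them over all of $M$, while completeness lets the resulting global distributions integrate to a product structure, contradicting the irreducibility of $f$. Hence every sufficiently small piece is full and locally irreducible and carries the restriction of $\xi$. Theorem~\ref{local isoparametric} then shows that each such piece is either an open subset of an inhomogeneous isoparametric hypersurface of the sphere or an open subset of an orbit of an $s$-representation; in either case the piece has constant principal curvatures.

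It then remains to globalize. Constancy of the eigenvalues of $A_{\eta(t)}$ for an arbitrary $\nabla^\perp$-parallel field $\eta(t)$ along an arbitrary curve $c$ is a local-to-global property: about each parameter value there is, by the previous step, a neighborhood on which $\eta$ is a parallel field of a local piece with constant principal curvatures, so the eigenvalues are constant there; being locally constant along the connected curve, they are constant, which is precisely the assertion that $f$ has constant principal curvatures. For the parenthetical conclusion, once $f(M)$ is a complete, simply connected, full, irreducible submanifold of the sphere with constant principal curvatures, the structure theory (Thorbergsson's theorem when the codimension is at least three, together with the focal/parallel-manifold description of submanifolds with constant principal curvatures) identifies $f(M)$, when it is not an isoparametric hypersurface, with a genuine orbit of an $s$-representation; completeness and simple connectedness are what promote the local orbit to the full one.

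I expect the main obstacle to be the first step, the reduction to the sphere. Theorem~\ref{rank rigidity}(a) is available only under homogeneity, so here it must be replaced by a completeness-based analysis of the parallel and focal manifolds attached to $\xi$, and the delicate point is to exclude, for an irreducible complete $M$, every non-spherical configuration compatible with a non-umbilical parallel isoparametric normal line. The monodromy-free gluing of the second step is conceptually routine once simple connectedness is in hand, but it is the other place where the hypotheses are genuinely needed.
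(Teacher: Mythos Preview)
The paper does not prove this theorem at all: it is simply quoted as a known result from \cite{DO} (Di Scala--Olmos), with the attribution ``(isoparametric rank rigidity, \cite{DO})'' and no accompanying proof in the text. So there is no ``paper's own proof'' to compare your proposal against.

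That said, your outline is in the right spirit for how the result is actually established in \cite{DO}: one first reduces to a sphere, then passes through the local version (Theorem~\ref{local isoparametric}, from \cite{CO}), and finally globalizes using completeness and simple connectedness. One caution: your first step, ruling out non-spherical configurations from the existence of a single non-umbilical isoparametric parallel normal section on a complete irreducible immersion, is exactly the substantial analytic content of \cite{DO}, and it is not as short as your sketch suggests. The argument there goes through a careful study of curvature normals of constant length and the associated focal structure, not just a case analysis of ``cylinder and cone configurations.'' If you want a self-contained proof you should consult \cite{DO} (or \cite{BCO}, Section~5.5(b)) directly rather than try to reconstruct this step from the local theorem alone.
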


\

Let $K$ acts (by linear isometries) on $\mathbb {R}^{N}$ as an
$s$-representation.
Let
$(G,K)$ be  the associated  simple (simply connected) symmetric pair with
 Cartan decomposition $\mathfrak {g} = \mathfrak  {k} \oplus
\mathfrak {p}$,
where $ \mathfrak {p} \simeq \mathbb {R}^N$. Let
$M= K.v$ be an  orbit.

One has that the normal space to $M$ at $v$ is given by \cite {BCO}
$$\nu _v (M) = \mathcal {C} (v): =
\{ x \in \mathfrak {p} : [x,v] = 0\} \ \ \ \ \ \ \ \text {(*)} $$
where $[\, ,\, ]$ is the bracket of $\mathfrak {g}$.

An $s$-representation is always the product of irreducible ones.
Then the orbit $M = K.v$ is a full submanifold if and only if
all the  components of $v$, in any $K$-irreducible subspace of
$\mathbb {R}^N$,
are not zero.

Let  $M$ be a full orbit of an $s$-representation and let $p\in M$. Then the map
$\xi \mapsto A_{\xi}$, from $\nu _p (M)$ into the symmetric endomorphisms of
$T_pM$, is injective. In other words, the first normal space of $M$
at $p$ coincides with the normal space (see \cite {BCO}).

One has the following result from \cite {HO}; see also
\cite {BCO}, Theorem 4.1.7.

\begin {teo} \label {slice holonomy} {\rm (\cite {HO})}
Let $K$ acts on $\mathbb {R}^{N}$
as an  $s$-representation and let $M= K.v$ be a full orbit.
Then the   normal holonomy group $\Phi (v)$
of $M$ at $v$ coincides
with the image of the representation of the isotropy $K_v$ on
 $\nu _v (M)$ {\rm (the so-called} {\it slice representation}{\rm )}.
\end {teo}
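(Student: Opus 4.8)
The plan is to compare both groups as subgroups of the orthogonal group of $\nu_v(M)=\cc(v)$ by showing that they have the same Lie algebra, generated by the same curvature operators. First I would record the algebraic structure of the normal space. Since $[\gp,\gp]\subseteq\gk$ and $[\gk,\gp]\subseteq\gp$, the Jacobi identity gives at once that $\cc(v)=\{x\in\gp:[x,v]=0\}$ is a Lie triple system: if $x,y,z\in\cc(v)$ then $[x,y]\in\gk$ commutes with $v$, so $[x,y]\in\gk_v:=\{\zeta\in\gk:[\zeta,v]=0\}$, and hence $[[x,y],z]\in\cc(v)$. Consequently $\gg'=[\cc(v),\cc(v)]\oplus\cc(v)$ is a subalgebra carrying a symmetric decomposition, and the effective slice representation of the isotropy on $\nu_v(M)$ is, infinitesimally, $\mathfrak s:=\ad(\gk_v)|_{\cc(v)}$. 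Since the slice representation of an $s$-representation is again an $s$-representation, $\mathfrak s$ coincides with the transvection algebra $\ad([\cc(v),\cc(v)])|_{\cc(v)}$ of $\gg'$, and is in particular spanned by the operators $\ad([x,y])|_{\cc(v)}$ with $x,y\in\cc(v)$.

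Next I would bound the normal holonomy from above, i.e. prove $\Phi(v)^{0}\subseteq (K_v|_{\nu_v})^{0}$. Two ingredients enter. On the one hand $K_v$ normalizes $\Phi(v)$, because conjugating a normal-parallel transport along a loop $\gamma$ by $k\in K_v$ produces the normal-parallel transport along the loop $k\gamma$, which is again based at $v$. On the other hand, using the standard formula for the shape operator of an orbit, $A_\xi[\zeta,v]=-\bigl([\zeta,\xi]\bigr)_{T_vM}$ for $\zeta\in\gk$ and $\xi\in\cc(v)$, together with the Ricci equation $\langle R^\perp(X,Y)\xi,\eta\rangle=\langle[A_\xi,A_\eta]X,Y\rangle$, one computes that each curvature endomorphism $R^\perp(X,Y)$ of $\nu_v(M)$ lies in $\ad(\gk_v)|_{\cc(v)}=\mathfrak s$. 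By Ambrose--Singer the Lie algebra of $\Phi(v)$ is spanned by the operators $P_\gamma^{-1}R^\perp(X,Y)P_\gamma$; since the normalization by $K_v$ keeps these conjugates inside $\mathfrak s$, we obtain $\operatorname{Lie}(\Phi(v))\subseteq\mathfrak s$.

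The reverse inclusion is the crux. Here I would show that the normal curvature operators already exhaust $\mathfrak s$. Concretely, evaluating the computation of the previous step one identifies, up to sign, $R^\perp(X,Y)|_{\cc(v)}$ with $\ad(w)|_{\cc(v)}$ for an explicit $w\in\gk_v$ depending bilinearly on $X,Y\in T_vM=[\gk,v]$; the task is to check that as $X,Y$ vary these $w$'s generate $[\cc(v),\cc(v)]$, equivalently that the normal curvature of $M$ restricted to $\nu_v(M)=\cc(v)$ coincides with the intrinsic curvature of the symmetric subspace $\gg'$. Since for a symmetric space the isotropy algebra is generated by the (parallel) curvature, this yields $\mathfrak s\subseteq\operatorname{Lie}(\Phi(v))$ and hence $\Phi(v)^{0}=(K_v|_{\nu_v})^{0}$.

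The main obstacle is precisely this surjectivity: carrying out the bracket bookkeeping that matches the normal curvature with the symmetric-space curvature, ruling out that the curvature operators land in a proper subalgebra of $\mathfrak s$, and (along the way) confirming that the effective slice algebra is exactly the transvection algebra of $\gg'$. A secondary point is to pass from the identity components to the full groups appearing in the statement; for this I would use that $M$ is connected and that $K_v$ is compact, so that the component group of the slice representation is already detected by normal holonomy (equivalently, one matches the restricted normal holonomy group with the connected slice representation and then accounts for $K_v/K_v^{0}$ separately). Modulo these two points the asserted equality $\Phi(v)=K_v|_{\nu_v(M)}$ follows.
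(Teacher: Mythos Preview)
The paper does not contain a proof of this theorem: it is stated with a reference to \cite{HO} (and to \cite{BCO}, Theorem 4.1.7) and then used as a black box. So there is nothing in the present paper to compare your argument against.

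On its own merits, your sketch follows the natural strategy and is close in spirit to the original proof in \cite{HO}, but two of the steps you flag as ``obstacles'' are genuine gaps rather than routine bookkeeping. First, in the upper bound $\operatorname{Lie}(\Phi(v))\subseteq\mathfrak s$: knowing that $R^\perp_v(X,Y)\in\mathfrak s$ at the single point $v$, together with the fact that $K_v$ normalises $\Phi(v)$, does not by itself control the Ambrose--Singer generators $P_\gamma^{-1}R^\perp_{\gamma(1)}P_\gamma$ for curves $\gamma$ leaving $v$. What one actually uses is the transvection formula for homogeneous submanifolds (see the proof of Proposition~\ref{constant multiplicities} in this paper, formula~(A)): parallel transport along any curve factors as $(\mathrm{d}l_k)|_{\nu}$ composed with an element of $\Phi(v)$, and this is what forces the conjugated curvature back into $\mathfrak s$. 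Your wording suggests you see this, but the mechanism you name (normalisation by $K_v$) is not the right one.

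Second, the passage from identity components to the full groups is not a formality. Your proposed fix, that ``the component group of the slice representation is already detected by normal holonomy'', is exactly the content of the theorem at the $\pi_0$ level and cannot be assumed. The argument in \cite{HO} handles this by exhibiting, for each $k\in K_v$, an explicit loop in $M$ whose normal parallel transport equals $k|_{\nu_v}$ (using that $\nu_v(M)=\cc(v)$ is totally geodesic in the symmetric space, so that one-parameter groups in $K_v$ arise as holonomies of geodesic loops). Without something of this kind your argument only yields the equality of restricted groups $\Phi(v)^0=(K_v|_{\nu_v})^0$.
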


For a Euclidean vector space
$(\mathbb {V}, \langle \, , \, \rangle )$, let $Sim  (\mathbb {V})$
denote the vector space of  (real) symmetric endomorphisms of
$\mathbb {V}$. The inner product on $Sim  (\mathbb {V})$ is the usual
one,  $\langle A ,B \rangle = \text {trace} (A.B)$.

We denote by $Sim _0 (\mathbb {V})$ the vector space of traceless
symmetric endomorphisms.

\begin {cor} \label {mainco} Let $K$ acts (by linear isometries)
on $\mathbb {R}^{N}$
as an  $s$-representation and let $M= K.v$,
where $\vert v \vert =1$.
Assume that the normal
holonomy group $\Phi (v)$ acts irreducibly on
$\bar {\nu }_v (M) : =
\{v\}^\perp \cap \nu _v (M)$. Then $M$ is a minimal submanifold
of the sphere $S^{N-1} \subset \mathbb {R}^{N}$. Moreover, the map
$\xi \mapsto A_{\xi}$ is a homothecy, from $\bar {\nu}_v (M)$ onto
its image in $Sim _0 (T_vM)$.
\end {cor}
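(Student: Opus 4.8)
The plan is to reduce the whole statement to the representation theory of the isotropy group acting on the normal space, using Theorem \ref{slice holonomy} to identify the normal holonomy with the slice representation. Write $\mathfrak g=\mathfrak k\oplus\mathfrak p$ for the Cartan decomposition, $\mathfrak p\simeq\mathbb R^N$, and recall from (*) that $\nu_v(M)=\mathcal{C}(v)$ splits orthogonally as $\nu_v(M)=\mathbb R v\oplus\bar{\nu}_v(M)$, since $v\in\mathcal{C}(v)$. By Theorem \ref{slice holonomy}, $\Phi(v)$ is the image of the slice representation of $K_v$; hence it acts on $\nu_v(M)$ preserving this splitting (it fixes $v$) and, by hypothesis, irreducibly on $\bar{\nu}_v(M)$.

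First I would record the equivariance of the shape operator. Since $M=K.v$ is $K$-homogeneous and $K_v$ fixes $v$, for every $k\in K_v$ one has $A_{\Ad(k)\xi}=\rho(k)\,A_\xi\,\rho(k)^{-1}$, where $\rho\colon K_v\to\OO(T_vM)$ is the slice (isotropy) representation on the tangent space. Thus the linear map $L\colon\xi\mapsto A_\xi$ intertwines the action of $\Phi(v)$ on $\bar{\nu}_v(M)$ with the conjugation action $B\mapsto\rho(k)\,B\,\rho(k)^{-1}$ on the symmetric endomorphisms $Sim(T_vM)$.

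For minimality, consider the linear functional $\xi\mapsto\trace A_\xi=n\langle H,\xi\rangle$ on $\bar{\nu}_v(M)$, where $H$ is the mean curvature vector and $n=\dim M$. By the equivariance above this functional is $\Phi(v)$-invariant, so the vector $H_0\in\bar{\nu}_v(M)$ representing it is fixed by $\Phi(v)$; since $\Phi(v)$ acts irreducibly and nontrivially on $\bar{\nu}_v(M)$, necessarily $H_0=0$. Because for $M\subset S^{N-1}$ the component of $H$ along the position vector $v$ is automatically $-v$, the vanishing of the $\bar{\nu}_v(M)$-component $H_0$ says precisely that $M$ is minimal in the sphere; equivalently $\trace A_\xi=0$ for all $\xi\in\bar{\nu}_v(M)$, so $L$ maps $\bar{\nu}_v(M)$ into $Sim_0(T_vM)$.

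Finally, for the homothety statement I would invoke Schur's lemma in self-adjoint form. The map $L$ is injective because $M$ is a full orbit, so its first normal space coincides with the normal space. Both $\bar{\nu}_v(M)$ and $Sim_0(T_vM)$ carry $\Phi(v)$-invariant inner products ($\langle A,B\rangle=\trace(AB)$ on the target), with respect to which the actions are orthogonal; hence the adjoint $L^{*}$ is also equivariant and $L^{*}L$ is a self-adjoint $\Phi(v)$-equivariant endomorphism of $\bar{\nu}_v(M)$. Its eigenspaces are $\Phi(v)$-invariant, so irreducibility forces $L^{*}L=c\,\Id$ with $c>0$; consequently $\langle A_\xi,A_\eta\rangle=c\,\langle\xi,\eta\rangle$ and $L$ is a homothety onto its image. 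The only delicate point is this last step: one must not assume the module is of real type, but working with the self-adjoint operator $L^{*}L$ (rather than with $L$ directly) produces a genuine real orthogonal eigenspace decomposition into submodules, which sidesteps any issue with the commuting field being $\mathbb C$ or $\mathbb H$.
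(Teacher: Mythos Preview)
Your proof is correct and follows essentially the same approach as the paper: identify $\Phi(v)$ with the slice representation via Theorem~\ref{slice holonomy}, use irreducibility to force the $\bar\nu_v(M)$-component of $H$ to vanish, and then apply a Schur-type argument. Your formulation via the self-adjoint equivariant operator $L^{*}L$ is equivalent to the paper's formulation via the pulled-back inner product $(\xi,\eta)=\langle A_\xi,A_\eta\rangle$, since $(\xi,\eta)=\langle L^{*}L\xi,\eta\rangle$; the paper simply observes that two $\Phi(v)$-invariant inner products on an irreducible module are proportional, which is the same conclusion.
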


\begin {proof}
The mean curvature vector $H(v)$ must be  fixed by the isotropy, represented
on the normal space.  Then, from Theorem \ref {slice holonomy},
$H(v)$ must be fixed by
$\Phi (v)$. Then, from the assumptions, $H(v)$
must be proportional to $v$ (which is
fixed by the normal holonomy group). Then $M$
is a minimal submanifold of the sphere.

Let us consider the following inner product $(\, ,\, )$ of $\bar {\nu }_v(M)$:
$(\xi , \eta) = \langle A_{\xi} , A_{\eta} \rangle$
Then, $(\,,\, )$ is $\Phi (v)$-invariant. In fact,
if $\phi \in \Phi (v)$,
there exists, from Theorem \ref {slice holonomy}, $g\in K_v$ such that
$g_{\vert \bar {\nu}_v (M)} = \phi$. Then
$$(\phi (\xi), \phi (\eta)) =
(g.\xi , g.\eta) = \langle A_{g.\xi} , A_{g.\eta} \rangle
= \langle gA_{\xi}g^{-1} , gA_{\eta}g^{-1} \rangle
= \langle A_{\xi} , A_{\eta} \rangle = (\xi , \eta)$$

Since $\Phi (p)$ acts irreducibly, then $(\, ,\, )$ is proportional to
$\langle \, ,\, \rangle$. Then $\xi \mapsto A_{\xi} $ is a homothecy.

\end {proof}

Recall that the normal holonomy (group) representation,
of a submanifold of a space form, on the normal space,
is, up to the fixed set, an $s$-representation \cite {O1, BCO}.

\

The proof of the above mentioned result depends on the construction
of the so-called {\it adapted normal curvature tensor}
$\mathcal {R}^\perp$ (see \cite {O1} and \cite {BCO}, Section 4.3 c).
In fact, if $M$ is an arbitrary submanifold
of a space of constant curvature then $\mathcal {R}^\perp$ is an algebraic
curvature tensor on the normal space $\nu (M)$.
Namely, if $p \in M$ and $R^\perp $ is the normal curvature tensor
at $p$, regarded as a linear map for $\Lambda ^2 (T_pM) \to
\Lambda ^2 (\nu _p (M))$, the adapted
normal curvature tensor is defined by
$$ \mathcal {R}^\perp  = R^\perp  \circ (R^\perp )^t $$
where $(\ )^t$ is the transpose endomorphism.
This implies that $\mathcal {R}^\perp$ has the same image as
$R^\perp$.

From the Ricci identity one
has the nice formula, if
$\xi _1 ,  \xi _ 2 , \xi _ 3 , \xi _ 4 \in \nu _p (M)$,
$$ \langle \mathcal {R}^\perp _{\xi _1 ,\xi _2}\xi _3 , \xi _4 \rangle
= -\text {trace} ([A_{\xi _1}, A_{\xi _2}]\circ [A_{\xi _3}, A_{\xi _4}])
\ \ \ \ \ \ \ \ $$
 $$
= \langle [A_{\xi _1}, A_{\xi _2}], [A_{\xi _3}, A_{\xi _4}]\rangle
= -\langle [[A_{\xi _1}, A_{\xi _2}], A_{\xi _3}], A_{\xi _4}\rangle
\ \   \ \ \ \text {(**)}  $$
where $A$ is the shape operator of $M$.

Since  $\mathcal {R}^\perp (\Lambda ^2 (\nu _p (M))) =
\ {R}^\perp (\Lambda ^2 (T _p M))$,
one has that  $\mathcal {R}^\perp _{\xi _1, \xi _2}$
belongs to the normal holonomy
algebra at $p$ (since curvature tensors, take values in the holonomy
algebra).

\

Since the isotropy representation of a semisimple symmetric space coincides
with that of
the dual symmetric space, we may always assume that the symmetric space is
compact.
Let then $(G,K)$ be a compact simply connected symmetric pair and let
$\mathfrak {g} = \mathfrak {k} \oplus \mathfrak p$ be the
Cartan decomposition
associated to such a pair. The isotropy representation of $K$  is
naturally identified with the $\text {Ad}$-representation of $K$ on
$\mathfrak {p}$. The Euclidean metric on $\mathfrak {p}$ is $-B$, where
 $B$ is the Killing form of $\mathfrak {g}$. We denote by a dot
the $\text {Ad}$-action of $K$ on $\mathfrak {p}$. Let $0\neq v\in \mathfrak {p}$
 and let us consider the  orbit $M= K.v \simeq K/K_v$ which is a Euclidean
submanifold with
constant principal curvatures (and rank at least $2$ if and only if
it is not most singular).

Let us consider the restriction $\langle \, , \, \rangle$
of $-B$ to $\mathfrak {k}$. This is an $\text {Ad}$-K invariant
positive definite inner product on $\mathfrak {k}$.
Let us consider the (normally) reductive decomposition
$$\mathfrak {k} = \mathfrak {k}_v \oplus \mathfrak {m}$$
where $\mathfrak {k}_v$ is the Lie algebra of the isotropy
group $K_v$ and $\mathfrak {m}$ is the orthogonal complement,
with respect to $\langle \, , \, \rangle$, of $ \mathfrak {k} $.
The restriction of $\langle \, , \, \rangle$ to
$\mathfrak {m} \simeq T_{[e]}K/K_v \simeq T_{v}M$ induced
a so-called normal homogeneous metric on $M$, which is in particular naturally
reductive, that we also denote
by $\langle \, , \, \rangle$. Such a Riemannian  metric on $M$ will be called
 the canonical normal homogeneous metric.
In general this metric is different from
the induced metric as a Euclidean submanifold. Namely,

\begin {prop} Let $K$ acts on $\mathbb {R}^N$ as an
irreducible  $s$-representation
and let $M = K.v$, $v\neq 0$. If the (canonical)
normal homogeneous metric on $M$ coincides
with the induced metric, then $M$ has parallel second fundamental form
{\rm (or equivalently, $M$ is extrinsically symmetric \cite {Fe})}.

\end {prop}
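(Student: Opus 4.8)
The plan is to translate the equality of metrics into a purely Lie-algebraic eigenvalue condition on $\ad(v)$, and then to produce the extrinsic symmetry of $M$ by hand. First I would rewrite the two metrics on $T_vM$ under the identification $\mathfrak{m}\to T_vM$, $X\mapsto X^{*}:=[X,v]$. The canonical normal homogeneous metric is, by definition, $\langle X,Y\rangle=-B(X,Y)$ for $X,Y\in\mathfrak{m}$, whereas the induced metric is $-B([X,v],[Y,v])$. Since $\ad(v)$ is skew-symmetric with respect to $B$ and $[X,v]=-\ad(v)X$, the $\ad$-invariance of $B$ gives $-B([X,v],[Y,v])=\langle X,-\ad(v)^2Y\rangle$. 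As $-\ad(v)^2$ is symmetric and positive semidefinite for $\langle\,,\,\rangle$, and positive definite on $\mathfrak{m}$ (its kernel in $\mathfrak{k}$ is exactly $\mathfrak{k}_v$), the hypothesis that the two metrics agree becomes equivalent to
$$\ad(v)^2=-\Id\qquad\text{on }\mathfrak{m}.$$

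Next I would propagate this identity to all of $\mathfrak{g}$. Applying $\ad(v)$ to it and using that $\ad(v)\colon\mathfrak{m}\to T_vM$ is a linear isomorphism (with $\ad(v)X=-X^{*}$), one obtains $\ad(v)^2=-\Id$ on $T_vM$ as well. On the other hand $\ad(v)^2=0$ on $\mathfrak{k}_v$ and, by the description $\nu_v(M)=\mathcal{C}(v)$ of the normal space, also on $\nu_v(M)$. Since $\mathfrak{g}=\mathfrak{k}_v\oplus\mathfrak{m}\oplus T_vM\oplus\nu_v(M)$, this shows that $\ad(v)^2$ has only the eigenvalues $0$ and $-1$, equivalently that $\ad(v)$ has eigenvalues $0,\pm i$. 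This three-step grading is precisely the structure underlying symmetric R-spaces.

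Finally I would exhibit the extrinsic symmetry. Set $\sigma:=\Ad(\exp(\pi v))=\exp(\pi\,\ad(v))$. On $\mathfrak{m}\oplus T_vM$, where $\ad(v)^2=-\Id$, one has $\exp(\pi\,\ad(v))=\cos(\pi)\Id+\sin(\pi)\ad(v)=-\Id$, while on $\mathfrak{k}_v\oplus\nu_v(M)$, where $\ad(v)=0$, it is the identity. In particular $\sigma$ preserves both $\mathfrak{k}$ and $\mathfrak{p}$, so $\exp(\pi v)$ normalizes $K$, and $\sigma(v)=v$ since $\ad(v)v=0$. Hence $\sigma$ restricts to an involutive linear isometry of $\mathfrak{p}\simeq\mathbb{R}^N$ that fixes $v$, stabilizes $M=K.v$, and acts as $-\Id$ on $T_vM$ and as $+\Id$ on $\nu_v(M)$; that is, $\sigma$ is the extrinsic symmetry of $M$ at $v$. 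By homogeneity, transporting $\sigma$ with $K$ gives an extrinsic symmetry at every point, so $M$ is extrinsically symmetric and therefore has parallel second fundamental form (the covariant derivative of the second fundamental form is reversed by each such reflection, hence vanishes); compare \cite{Fe}.

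The step I expect to be the main obstacle is the passage from the metric equality to the clean statement $\ad(v)^2=-\Id$ on $\mathfrak{m}$ and its propagation across the four-fold splitting $\mathfrak{g}=\mathfrak{k}_v\oplus\mathfrak{m}\oplus T_vM\oplus\nu_v(M)$, where one must exploit that $\ad(v)$ interchanges $\mathfrak{m}$ and $T_vM$. The subsequent verification that $\sigma=\Ad(\exp(\pi v))$ genuinely normalizes the $K$-action, and not merely acts correctly on $\mathfrak{p}$, is where the three-eigenvalue structure is essential.
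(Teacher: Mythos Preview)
Your argument is correct and genuinely different from the paper's. The paper proceeds via the canonical connection $\nabla^c$ of the naturally reductive decomposition: since $\alpha$ is $\bar\nabla^c$-parallel, one has $(\bar\nabla_x\alpha)(y,z)=\alpha(D_xy,z)+\alpha(y,D_xz)$ with $D=\nabla-\nabla^c$; the hypothesis makes $\nabla$ the Levi--Civita connection of the normal homogeneous metric, so $D$ is skew ($D_xx=0$), whence $(\bar\nabla_x\alpha)(x,x)=0$, and Codazzi symmetry finishes. This is a short intrinsic computation that yields $\bar\nabla\alpha=0$ directly.

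Your route instead extracts the Lie-algebraic content of the hypothesis, showing it is equivalent to $\ad(v)^2=-\Id$ on $\mathfrak m$, propagates this via the isomorphism $\ad(v)\colon\mathfrak m\to T_vM$ to obtain the three-eigenvalue structure $\{0,\pm i\}$ on $\mathfrak g$, and then exhibits the extrinsic symmetry explicitly as $\sigma=\Ad(\exp(\pi v))$. This is longer but more informative: it identifies the hypothesis with the symmetric $R$-space condition and produces the reflection rather than merely inferring $\bar\nabla\alpha=0$. One small point worth making explicit is that you are using the compact model of the symmetric pair (so that $\ad(v)$ is skew for $-B$ and $\exp(\pi v)\in G$); the paper states at the outset that one may assume this, but your computation of $\exp(\pi\,\ad(v))$ on the $\pm i$-eigenspaces depends on it.
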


\begin {proof}

We keep the notation previous to this proposition.
Let $\nabla ^c$ be the canonical connection on $M$
associated to the
reductive decomposition
$\mathfrak {k} = \mathfrak {k}_v \oplus \mathfrak {m}$.
Then the second fundamental form $\alpha$ of $M$ is parallel
with respect to the connection $\bar  {\nabla}^c =
\nabla ^c \oplus \nabla ^\perp$, i.e.
 $\bar  {\nabla}^c \alpha = 0$ \cite {OSa, BCO}.
Let $\bar {\nabla } = \nabla \oplus
\nabla ^\perp$, where $\nabla$ is the Levi-Civita
connection on $M$ associated to the induced metric
which coincides, by assumption,
with the normal homogeneous metric.
 Then
$$(\bar {\nabla} _x \alpha ) (y,z) = \alpha (D_x y, z)
+ \alpha (y , D_xz)$$
where $D = \nabla - \nabla ^c $
We have that $D_xy = -D_yx$. This is a general fact, for
naturally reductive spaces, since
 the canonical geodesics coincide with the Riemannian
geodesics (see, for instance, \cite {OR}).

Then
$$(\bar {\nabla} _x \alpha )(x,x) = 2\alpha (D_x x, x)
= 0$$

But, from the Codazzi identity,
 $(\bar {\nabla }_x\alpha )(y,z)$ is symmetric in all of its
three variables.
Then $\bar {\nabla }\alpha  = 0$ and so $M$ has parallel second
fundamental form.

\end {proof}

\begin {cor} \label {parallel} Let $K$ acts on $\mathbb {R}^N$
as an $s$-representation
and let $M = K.v$, $v\neq 0$. Assume that  $K_v$ acts irreducibly on $T_vM$.
Then $M$ has parallel second fundamental form {\rm
(or, equivalently, $M$ is extrinsically symmetric \cite {Fe})}.
\end {cor}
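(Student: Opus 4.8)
The plan is to deduce the statement from the previous Proposition by showing that, under the irreducibility hypothesis on the isotropy, the induced metric on $M = K.v$ agrees, up to a positive constant, with the canonical normal homogeneous metric. I keep the notation $\gk = \gk_v \oplus \gm$, with $\gm \simeq T_vM$, from the construction preceding the Proposition. Both metrics in question restrict to inner products on $\gm$ that are invariant under $\Ad(K_v)$: the induced metric because $K$ acts on $\mathbb{R}^N$ by isometries, so $K_v$ acts orthogonally on $T_vM$; and the normal homogeneous metric because it is the restriction to $\gm$ of the $\Ad(K)$-invariant form $-B$, and $\Ad(K_v)$ preserves both $\gk_v$ and its $(-B)$-orthogonal complement $\gm$.

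Next I would invoke Schur's lemma. Since $K_v$ acts irreducibly on $T_vM$ by hypothesis, any two $\Ad(K_v)$-invariant inner products on $\gm$ are proportional: writing the second as the first precomposed with a positive-definite symmetric operator, that operator commutes with every element of $K_v$, hence has $K_v$-invariant real eigenspaces, and irreducibility forces it to be a multiple of the identity. Thus the induced metric equals a positive multiple of the normal homogeneous metric. Rescaling the Killing form $-B$ by this constant changes neither the reductive splitting $\gk = \gk_v \oplus \gm$ (orthogonality is unaffected by a global scale) nor the associated canonical connection $\nabla^c$, while making the normal homogeneous metric coincide exactly with the induced metric.

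At this point the hypotheses of the previous Proposition are met, so its conclusion gives that $M$ has parallel second fundamental form. In fact the computation in that proof uses only that the Levi-Civita connection $\nabla$ of the induced metric equals that of the normal homogeneous metric (so that $D = \nabla - \nabla^c$ satisfies $D_xy = -D_yx$), together with $\bar{\nabla}^c \alpha = 0$ and the Codazzi symmetry; it never uses irreducibility of the ambient $s$-representation. Hence the argument applies verbatim even when that representation is reducible. The equivalence with extrinsic symmetry is Ferus's theorem \cite{Fe}.

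The step I expect to require the most care is precisely this reduction to the Proposition in the reducible case, since the Proposition is stated for irreducible $s$-representations. The safe route is either the observation just made, namely that the Proposition's proof only needs the coincidence of Levi-Civita connections; or, alternatively, to note that irreducibility of the $K_v$-action rules out any nontrivial extrinsic product splitting of $M$, so that all but one irreducible factor of $v$ must vanish and $M$ is, up to an irrelevant flat translation, an orbit of a single irreducible $s$-representation, to which the Proposition applies directly. A secondary point worth stating explicitly is that Schur's argument above is carried out over $\mathbb{R}$, which is legitimate because the comparison operator between the two positive-definite inner products is diagonalizable with real eigenvalues.
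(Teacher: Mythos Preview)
Your proof is correct and follows the intended route: Schur's lemma forces the induced and normal homogeneous metrics to be proportional, and then the preceding Proposition applies. Your careful treatment of the fact that the Proposition is stated for \emph{irreducible} $s$-representations while the Corollary is not is a valid refinement; both of your suggested fixes (observing that the Proposition's proof never uses irreducibility, or reducing to a single irreducible factor) work.
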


\begin {nota} \label {parallel-s} A submanifold of the Euclidean space
with parallel second fundamental form
is, up to a Euclidean factor, an orbit of an
$s$-representation \cite {Fe}
(see also \cite {BCO}).
\end {nota}

\begin {lema} \label {proportional}

Let $M^n, \bar {M} ^n  \subset S^{N-1}$ be submanifolds
of the sphere
with parallel second fundamental forms
(or, equivalently, extrinsically symmetric spaces). Assume also that
$M$ is a full submanifold of the Euclidean space
$\mathbb {R}^N$ and  that there exists
$p\in M \cap \bar M$ with $T_pM = T_p\bar M$.
Assume, furthermore,  that the associated fundamental forms at $p$,
$\alpha , \bar {\alpha}$
 of $M$ and $\bar M$, respectively, as submanifolds of the sphere,
are proportional (i.e. $\bar {\alpha} = \lambda  \alpha$, $\lambda \neq 0$).
Then $M = \bar M$  (and so $\lambda = 1$) or $M = \sigma (\bar {M})$, where
$\sigma$ is the orthogonal transformation of  $\mathbb {R}^N$ which
 is the identity on $\mathbb {R}p\oplus T_pM$ and minus the identity
on $\bar {\nu }_p(\bar {M}) = (\mathbb {R}p\oplus T_pM)^\perp$
(and so $\lambda
= -1$).

\end {lema}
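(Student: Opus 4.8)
The plan is to reduce everything to an algebraic comparison at the single point $p$, using that a submanifold with parallel second fundamental form is rigid. First I would record the data at $p$: since $T_pM = T_p\bar M$ and both metrics are induced from $\mathbb{R}^N$, the first fundamental forms agree at $p$, and the normal spaces to the sphere coincide, $\bar\nu_p(M) = \bar\nu_p(\bar M) = (\mathbb{R}p\oplus T_pM)^\perp$; moreover $\bar\alpha = \lambda\alpha$ there. Passing to shape operators (as submanifolds of the sphere), this means $\bar A_\xi = \lambda A_\xi$ for every $\xi$ in this common normal space.

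The crucial step is to prove $\lambda^2 = 1$. For this I would exploit that $\alpha$ is parallel, $\bar\nabla\alpha = 0$, which forces $\bar R(W,V)\cdot\alpha = 0$ for $\bar R = R^M\oplus R^\perp$, i.e.
\begin{equation*}
R^\perp(W,V)\,\alpha(X,Y) = \alpha(R^M(W,V)X,Y) + \alpha(X,R^M(W,V)Y).
\end{equation*}
Now I substitute the Gauss and Ricci equations for a submanifold of the unit sphere: the Ricci equation expresses $R^\perp$ through commutators of shape operators (no ambient contribution), so the left-hand side is homogeneous cubic in $\alpha$; the Gauss equation writes $R^M(W,V)X$ as the constant-curvature expression $\langle V,X\rangle W - \langle W,X\rangle V$ plus a term quadratic in the shape operators (hence in $\alpha$), so the right-hand side is the sum of a term \emph{linear} in $\alpha$ and a term cubic in $\alpha$. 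Collecting, the parallelism identity takes the schematic form $C(\alpha) = L(\alpha)$ with $C$ a homogeneous cubic and $L$ linear in $\alpha$, valid at $p$.

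The same identity holds for $\bar M$ with $\alpha$ replaced by $\bar\alpha = \lambda\alpha$, that is $\lambda^3 C(\alpha) = \lambda L(\alpha)$; dividing by $\lambda\neq 0$ and subtracting the identity for $M$ yields $(\lambda^2-1)\,C(\alpha) = 0$. Since $C(\alpha) = L(\alpha)$, if $C(\alpha)\neq 0$ we conclude $\lambda^2 = 1$. The remaining possibility $C(\alpha)\equiv 0$ forces $L(\alpha)\equiv 0$; a short specialization of $L\equiv 0$ (taking $X=W\perp V$ and then $Y=V$) gives $\alpha(V,V)=\alpha(W,W)$ for all orthonormal $V,W$, i.e. $M$ is totally umbilical in the sphere. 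But a totally umbilical submanifold of the sphere lies in a proper affine subspace of $\mathbb{R}^N$, contradicting fullness (unless $\alpha\equiv 0$, in which case $M=\bar M=S^{N-1}$ and the statement is trivial). Hence $\lambda^2 = 1$. Finally I would invoke rigidity: the second fundamental form at $p$ determines all shape operators, hence the normal holonomy algebra and, by Remark \ref{parallel-s}, the $s$-representation whose orbit through $p$ is the submanifold, so a parallel submanifold is determined by $(T_pM,\alpha_p)$. If $\lambda=1$ then $\alpha_p=\bar\alpha_p$ and $M=\bar M$. If $\lambda=-1$, then $\sigma(\bar M)$ is again extrinsically symmetric with $T_p\sigma(\bar M)=T_pM$ and second fundamental form $-\bar\alpha=\alpha$ at $p$ (as $\sigma=\operatorname{Id}$ on $\mathbb{R}p\oplus T_pM$ and $-\operatorname{Id}$ on $\bar\nu_p$), so the previous case gives $M=\sigma(\bar M)$.

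The main obstacle is the step $\lambda^2=1$: one must verify that the parallelism identity genuinely separates into a nontrivial cubic part and a linear part (so that the scaling $\alpha\mapsto\lambda\alpha$ produces the factor $\lambda^2-1$), and that the degenerate case $C(\alpha)\equiv 0$ is excluded precisely by the fullness of $M$.
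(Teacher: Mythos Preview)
Your proposal is correct and follows essentially the same route as the paper. Both arguments exploit that the parallelism of $\alpha$ gives $(R_{x,y},R^\perp_{x,y})\cdot\alpha=0$, then use Gauss and Ricci to split this into a part cubic in $\alpha$ and a linear part $(R^S_{x,y},0)\cdot\alpha$ coming from the ambient sphere curvature; comparing the identities for $\alpha$ and $\lambda\alpha$ forces either $\lambda^2=1$ or $(R^S_{x,y},0)\cdot\alpha=0$, the latter meaning $\alpha$ is $\mathrm{SO}(T_pM)$-invariant and hence $M$ is umbilical, which contradicts fullness unless $M=S^{N-1}$. The only cosmetic difference is that the paper packages the identity $(R,R^\perp)\cdot\alpha=0$ via the isotropy/holonomy algebra $\mathfrak{t}_p$ of the $s$-representation rather than writing out the parallelism equation directly, and then subtracts the two holonomy conditions; your explicit cubic/linear bookkeeping is the same computation unwound.
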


\begin {proof}
Observe, in our assumptions,
 that the second fundamenal forms of $M$ and
$\bar M$, as Euclidean submaniofolds, are not proportional,
unless they coincide (since the shapes operators  of $M$ and
$\bar M$, coincides in the direction of the position vector $p$).

Let us write  $M= K.p$  where $K$ acts as an
irreducible $s$-representation. One
has that the restricted holonomy at $p$, of the bundle
$TM \oplus \bar {\nu} (M)$, is  the representation,
 of the connected isotropy
$(K_p)_0$, on
$T_pM \oplus \bar {\nu} _p(M)$.
This is a well-known fact that follows form
the following property: if $X$ belongs to the Cartan subalgebra
associated to the  symmetric pair $(K,K_p)$,
 then $\text {d}l_{\text {Exp}(tX)}$ gives
the Levi-Civita parallel transport, when restricted to $T_pM$,
along the geodesic
$\gamma (t) = \text {Exp}(tX).p$, and at the same time, when restricted to
$\bar {\nu} _p (M)$, the normal parallel transport along $\gamma (t)$.

Since curvature endomorphisms take values in the holonomy algebra, one
 has that $(R_{x,y}, R^\perp _{x,y})\in \mathfrak {t}_{p}$, where
$\mathfrak {t}_{p} = \text {Lie} (K_p) = \text {Lie} ((K_p)_0)\subset
\mathfrak {so}(T_pM)\oplus \mathfrak {so} (\bar {\nu} _p(M))  $ and $R$,
$R^\perp$ are the tangent and normal
curvature tensors of $M$ at $p$, respectively.

Let $R^S$ be the curvature tensor of the sphere $S^{N-1}$ at $p$, restricted
to $T_pM$.
Then, from the Gauss equation,
$$R_{x,y}  = T_{x,y} + R^S_{x,y}$$
where $\langle T_{x,y}z,w\rangle = \langle \alpha (x,w), \alpha (y,z)\rangle
- \langle \alpha (x,z), \alpha (y,w)\rangle $

For $\bar {M} = \bar {K}.p$ we have similar objects
$\bar {R},\,  \bar {R}^\perp ,\,  \bar {\mathfrak {t}}_{p}, \,  \bar {T}$.
From the assumptions one has that $\bar {T} = \lambda ^2  T$.
So,
$$\bar {R}_{x,y}  =
\lambda ^{2} T_{x,y} + R^S_{x,y}  \ \ \ \text {(a)}$$

From the assumptions, and Ricci equation, one has that
$$\bar {R}_{x,y}^\perp = \lambda ^{2} {R}_{x,y}^\perp
\ \ \ \ \ \ \ \ \ \ \ \ \text {(b)}$$

Now observe that, for any $X\in \mathfrak {t}_{p} \subset
\mathfrak {so}(T_pM)\oplus \mathfrak {so}(\bar{\nu} _p(M))$,
$$X.\alpha = 0 = X.(\lambda.\alpha) = X.\bar {\alpha}
\ \ \ \ \ \text {(c)} \ \ \ \  \ \ \ \ $$
and the same is true for any $\bar {X} \in \bar {\mathfrak {t}}_{p}$
(the actions of $X$ and $ \bar {X}$ are  derivations).

As we observed,
$(R_{x,y}, R^\perp _{x,y})\in \mathfrak {t}_{p}$,
and  $(\bar {R}_{x,y}, \bar {R}^\perp _{x,y})
\in \bar {\mathfrak {t}}_{p}$. Then, form
(a), (b) and  (c) one obtains, if $\lambda \neq \pm 1$
 that
$$ (R_{x,y}^S , 0). \alpha = 0 = (R_{x,y}^S , 0).
\bar {\alpha}$$.

Since the linear span of $\{R_{x,y}^S : x,y\in T_pM\}$
is $\mathfrak {so}(T_pM)$, one has that

$$\alpha (g.x, g.y) = \alpha (x,y)$$
for all $g\in \text {SO}(T_pM)$. Then,
from the  Gauss equation
$\langle A_\xi x, y\rangle = \langle \alpha (x,y), \xi\rangle$,
one obtains that all the shape operators of $M$ at $p$ commute
with any element of $\text {SO}(T_pM)$. Then $M$ is umbilical at $p$ and hence,
since it is homogeneous, at any point.
Then $M$ is an extrinsic  sphere. Since $M$ is full we conclude that
$M = S^{N-1}$. Then, since $n = N-1$,
$M =\bar {M}$.

Observe that the fullness condition is essential. In fact, if $M$ and $\bar
{M}$
are umbilical submanifolds of the sphere of different radios,
the second fundamental
forms at $p$ are proportional.

If $\lambda =1$, then $M$ and $\bar {M}$ have both the same second fundamental
form at $p$. Since  both submanifolds have parallel second
fundamental forms,
it is well-known and standard to prove that $M= \bar {M}$.

If $\lambda = -1$, then we replace $\bar {M}$ by $\sigma (\bar {M})$
and the second fundamental forms of $M$ and $\bar {M}$ must coincide.
Therefore, $M = \sigma (\bar {M})$.

\end {proof}

\begin {nota}\label {correction}

Let us enounce Theorem 4.1 in \cite {O5}:
\it
let $M^n$  be a locally full submanifold either of the Euclidean space or the
sphere, such that the local normal holonomy
group at $p$ acts without fixed non-
zero vectors.
Assume, furthermore, that no factor of the
normal holonomy is transitive on the
sphere. Then there are points in $M$,
arbitrary close to $p$, where the first normal
space coincides with the normal space.
In particular, $\text {codim}(M)\leq \frac 1 2 n(n + 1)$.
\rm

This bound on the codimension is correct.
But the better and
sharp estimate is $\text {codim}(M)\leq \frac 1 2 n(n + 1) -1$.
In fact, from the proof one has that if the shape operator, at a generic
$q\in M$,
$A_{\xi}$ is a multiple
of the identity (it needs not to be zero, as in that proof), then
$\xi$ is in  the nullity of the adapted normal curvature tensor
$\mathcal {R}^\perp$. But this last tensor is not degenerate.
This implies that the injective map
$A:\nu _q(M) \to Sim (T_qM)$ cannot be onto. Then $\text {dim} (\nu _q (M))
= \text {codim}(M)\leq \text {dim}(Sim (T_qM)) -1 = \frac 1 2 n(n + 1) -1$.

If $M$, in the above assumptions,
is is a submanifold of the sphere,
then the codimension of $M$,
as a Euclidean submanifold,
is bounded by $\frac 1 2 n(n + 1)$.

\end {nota}

\subsection {Holonomy systems}

\

We recall here some facts about holonomy systems that are useful in
submanifold geometry.

A {\it holonomy system} is a triple $[\mathbb {V}, R, H]$, where
$\mathbb {V}$ is a Euclidean vector space, $H$ is a connected
compact Lie subgroup of $\text {SO}(\mathbb {V})$ and   $R \neq 0$
is an algebraic Riemannian
curvature
tensor on $\mathbb {V}$ that takes values $R_{x,y}\in \mathfrak {h} =
\text {Lie}(H)$.
The holonomy system is called:

- {\it irreducible}, if $H$ acts irreducible on $\mathbb {V}$.

- {\it transitive}, if $H$ acts transitively on the unit sphere of $\mathbb {V}$.

- {\it symmetric}, if $h(R) = R$, for all $h \in H$.

\

Observe that a Lie subgroup  $H \subset \text {SO} (\mathbb {V})$
that acts irreducibly on
$\mathbb {V}$ must be compact, as it is well-known
(since the center of $H$ must be one-dimensional).

A holonomy system $[\mathbb {V}, R, H]$
is the product (eventually, after enlarging $H$) of irreducible holonomy systems
(up to a Euclidean factor).

One has the following remarkable result.

\begin {teo} \label {Simons} {\rm (Simons holonomy theorem, \cite {S, O5})}.

An irreducible and non-transitive holonomy system
$[\mathbb {V}, R, H]$ is symmetric.
Moreover, $R$ is, up to a scalar multiple, unique.

\end {teo}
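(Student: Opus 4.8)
The plan is to prove the two assertions together, reducing the symmetry statement to an infinitesimal one and isolating non-transitivity as the single geometric input. First I would reduce the symmetry condition ``$h(R)=R$ for all $h\in H$'' to the infinitesimal invariance $Z\cdot R=0$ for every $Z\in\mathfrak{h}$, where $(Z\cdot R)_{x,y}=[Z,R_{x,y}]-R_{Zx,y}-R_{x,Zy}$; this equivalence is legitimate because $H$ is connected. The first key observation is that $Z\cdot R$ is again an algebraic curvature tensor (the derivation action of $\mathfrak{so}(\mathbb{V})$ preserves the Bianchi relations), and that its values still lie in $\mathfrak{h}$: indeed $[Z,R_{x,y}]\in\mathfrak{h}$ since $\mathfrak{h}$ is a subalgebra, while $R_{Zx,y},R_{x,Zy}\in\mathfrak{h}$ by hypothesis. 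Thus each nonzero $Z\cdot R$ yields a new holonomy system $[\mathbb{V},Z\cdot R,H]$ with the same group, and iterating, the whole ``derived'' family of tensors consists of $\mathfrak{h}$-valued algebraic curvature tensors. The symmetry claim becomes: this derived family is trivial.

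Next I would bring in non-transitivity. Let $\mathfrak{h}_R\subseteq\mathfrak{h}$ be the span of all endomorphisms $R_{x,y}$ together with the values of all iterated derivatives of $R$. Using $(Z\cdot R)_{x,y}=[Z,R_{x,y}]-R_{Zx,y}-R_{x,Zy}$ one checks directly that $[\mathfrak{h},\mathfrak{h}_R]\subseteq\mathfrak{h}_R$, so $\mathfrak{h}_R$ is a nonzero ideal of $\mathfrak{h}$; since $\mathfrak{h}_R\cdot\mathbb{V}$ is then $\mathfrak{h}$-invariant, irreducibility forces $\mathfrak{h}_R\cdot\mathbb{V}=\mathbb{V}$, i.e.\ $\mathfrak{h}_R$ has no nonzero fixed vector. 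The crucial step is a dichotomy: if some $Z\cdot R\neq0$, then the enlarged set of curvature endomorphisms makes the connected subgroup $H_R\subseteq H$ with Lie algebra $\mathfrak{h}_R$ act \emph{transitively} on the unit sphere of $\mathbb{V}$, contradicting the hypothesis. Geometrically, following \cite{O5}, I would fix a unit vector $v$ on a principal orbit $M=H.v$, where non-transitivity guarantees $\bar{\nu}_v(M)=\{v\}^\perp\cap\nu_v(M)\neq0$, and show that a nonzero derived tensor produces curvature directions $R_{v,\xi}$ with $\xi\in\bar{\nu}_v(M)$ that push $v$ transversally off its orbit, ultimately sweeping out the whole sphere. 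Hence $Z\cdot R=0$ for all $Z$, so $R$ is symmetric.

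For uniqueness up to scale, once symmetry is established I would argue by Schur's lemma. If $R,R'$ are two $\mathfrak{h}$-valued symmetric algebraic curvature tensors, their Ricci tensors are $H$-invariant symmetric bilinear forms on the irreducible module $\mathbb{V}$, hence each a multiple of $\langle\,,\,\rangle$, say $\mu,\mu'$. A symmetric holonomy system with $H$ nontrivial and irreducible cannot be Ricci-flat (an irreducible Ricci-flat symmetric space is flat, forcing $R=0$), so $\mu\neq0$; then $R'-(\mu'/\mu)R$ is an $H$-invariant, $\mathfrak{h}$-valued, Ricci-flat symmetric curvature tensor and therefore vanishes, giving $R'=(\mu'/\mu)R$.

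The hard part will be the transitivity dichotomy of the second paragraph: proving that a nonzero derived tensor $Z\cdot R$ forces the connected group $H_R$ to act transitively on the sphere. This is precisely where all the force of the irreducibility and non-transitivity hypotheses is concentrated, and where Simons \cite{S} invoked representation theory while the geometric proof of \cite{O5} substitutes an orbit and normal-holonomy argument on the principal orbit $M=H.v$; everything else is formal manipulation of the derivation action and Schur's lemma.
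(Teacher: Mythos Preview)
The paper does not prove this theorem at all; it is stated as a quoted result with citation to \cite{S,O5}, and the text moves directly to Remark~\ref{+simons}. So there is no ``paper's own proof'' to compare against. Your outline follows the architecture of those two references: reduce symmetry to the infinitesimal condition $Z\cdot R=0$, note that each $Z\cdot R$ is again an $\mathfrak{h}$-valued algebraic curvature tensor, and then argue by a transitivity dichotomy. You are right that the entire weight of the argument sits in that dichotomy, and you are honest that you are deferring it; but your one-sentence description of the geometric mechanism (``curvature directions $R_{v,\xi}$ \dots\ push $v$ transversally off its orbit, ultimately sweeping out the whole sphere'') is too vague to count as a plan, and as phrased it is slightly off: you claim the subgroup $H_R\subseteq H$ becomes transitive, but $H$ itself is non-transitive, so what you really need is a contradiction internal to the orbit/normal-space structure, which in \cite{O5} goes through the normal holonomy of a principal $H$-orbit rather than a direct transitivity statement for $H_R$.

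Your uniqueness argument has a small but genuine gap. When you form $R''=R'-(\mu'/\mu)R$ and invoke ``an irreducible Ricci-flat symmetric space is flat,'' the symmetric space you build from $R''$ has isotropy algebra $\operatorname{span}\{R''_{x,y}\}$, which a priori is only an ideal of $\mathfrak{h}$, and you have not checked that this smaller algebra still acts irreducibly on $\mathbb{V}$. The clean fix, and the one implicit in the paper's own remarks following the theorem, is to use scalar curvature directly: choose $\lambda$ so that $sc(R'-\lambda R)=0$; if $R''=R'-\lambda R\neq 0$ then $[\mathbb{V},R'',H]$ is again irreducible and non-transitive, hence symmetric by the first part, and the paper notes that $sc$ of such a tensor is nonzero because the associated irreducible symmetric space has nonzero scalar curvature---here one also uses Remark~\ref{+simons}, which says that in the irreducible symmetric case $\mathfrak{h}$ is already spanned by the $R''_{x,y}$, closing the gap.
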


\begin {nota}\label {+simons}
If $[\mathbb {V}, R, H]$ is an irreducible symmetric holonomy system,
then $\mathfrak {h}$ coincides with the  linear span of $R_{x,y}$, $x,y \in
\mathbb {V}$. In this case, since
$\langle R_{x,y}v,\xi\rangle = \langle R_{v,\xi}x,y\rangle$,   one has that
the normal space at $v$ to the orbit $H.v$ is given by
$$\nu _v(H.v) = \{\xi \in \mathbb {V}: R_{v,\xi}=0\}$$
\end {nota}

\,

From a symmetric holonomy system one can build an involutive algebraic Riemannian
symmetric
pair $\mathfrak {g} = \mathfrak {h} \oplus \mathbb {V}$.
The bracket  $[\, , \, ]$ is given by:

{\bf a)} $[\, , \, ]_{\vert \mathfrak {h}\times \mathfrak {h}}$
coincides with the bracket of $\mathfrak {h}$.

{\bf b)} $[X,v] = - [v,X] = X.v$, if $X\in \mathfrak {h}
\subset \mathfrak {so}(\mathbb {V})$ and
$v\in \mathbb {V}$.

{\bf c)} $[v,w] = R_{v,w}$, if $v,w \in  \mathbb {V}$.

\

This implies the following: \it if $[\mathbb {V}, R, H]$
is an irreducible and symmetric
holonomy system then $H$ acts on $\mathbb {V}$ as an
irreducible  $s$-representation. \rm

Observe that, in this case, the scalar curvature $sc(R)$ of $R$ is
different from $0$ (since this is true for the
curvature tensor of an irreducible
symmetric space).

\rm

\begin {lema} \label {T(R)} Let $[\mathbb {V}, R, K]$ be an
 irreducible and non-transitive holonomy system.
Let $T\in \text {SO}(\mathbb {V})$ be such
that
$R_{x,y}= 0$ if and only if $R_{T(x),T(y)}= 0$. Then
$T(R) = R$.
\end {lema}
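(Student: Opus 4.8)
The plan is to feed $T(R)$ into the uniqueness part of Simons' theorem (Theorem \ref{Simons}). First, since $[\mathbb{V},R,K]$ is irreducible and non-transitive, Simons' theorem gives that it is symmetric and that $R$ is unique up to a scalar; moreover, by Remark \ref{+simons}, $\mathfrak{k}=\mathrm{span}\{R_{x,y}:x,y\in\mathbb{V}\}$, the group $K$ acts as an irreducible $s$-representation, and for every $v$ the normal space of the orbit is $\nu_v(K.v)=\{\xi\in\mathbb{V}:R_{v,\xi}=0\}$. Using the symmetry $\langle R_{x,y}u,w\rangle=\langle R_{u,w}x,y\rangle$ one checks that this normal space is exactly the orthogonal complement of the tangent space $T_v(K.v)=\mathfrak{k}\cdot v$.

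Next I would translate the hypothesis. The condition $R_{x,y}=0\iff R_{Tx,Ty}=0$ says precisely that the set $Z(R)=\{(x,y):R_{x,y}=0\}$ is invariant under $T\times T$; since the natural action is $T(R)_{x,y}=T\,R_{T^{-1}x,T^{-1}y}\,T^{-1}$, this is the same as $Z(T(R))=Z(R)$. Fixing $v$ and letting $\xi$ vary, the hypothesis yields $T(\nu_v(K.v))=\nu_{Tv}(K.v)$, and taking orthogonal complements (here $T\in\mathrm{SO}(\mathbb{V})$) gives $T(T_v(K.v))=T_{Tv}(K.(Tv))$. Thus $T$ carries the tangent distribution of the orbit foliation of $K$ to itself, so it maps each $K$-orbit onto a $K$-orbit. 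Writing $\mathfrak{k}'=T\mathfrak{k}T^{-1}=\mathrm{span}\{T(R)_{x,y}\}$, it follows that $\mathfrak{k}$ and $\mathfrak{k}'$ have exactly the same orbits, i.e. $\mathfrak{k}\cdot w=\mathfrak{k}'\cdot w$ for every $w\in\mathbb{V}$.

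The main step, and the one I expect to be the real obstacle, is to upgrade this orbit coincidence to $\mathfrak{k}'=\mathfrak{k}$, that is, to show that $T$ normalizes $K$. This is an orbit-rigidity statement for irreducible (non-transitive) $s$-representations: the orbit foliation should determine the acting group. Concretely, one checks that $T K T^{-1}$ preserves every $K$-orbit setwise, so its Lie algebra consists of Killing fields tangent to all orbits; because the induced action on the orbit space (a Weyl chamber) is by a discrete group, such fields must be tangent to the leaves, and the tangent foliate Killing fields are exactly $\mathfrak{k}$. Hence $\mathfrak{k}'=\mathfrak{k}$ and $T$ normalizes $K$. This is the delicate point, since orbit equivalence alone does not force equality of the groups in general (as the transitive case shows), so here one must use non-transitivity and the polar structure of the $s$-representation.

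Once $T$ normalizes $K$, the tensor $T(R)$ again takes values in $\mathfrak{k}$, and $T(R)\neq0$, so $[\mathbb{V},T(R),K]$ is again an irreducible non-transitive holonomy system. By the uniqueness in Simons' theorem, $T(R)=cR$ for some scalar $c$. Finally, since $T$ is orthogonal it preserves the scalar curvature, so $sc(T(R))=sc(R)$, while $sc(cR)=c\,sc(R)$; as $sc(R)\neq0$ (the curvature of an irreducible symmetric space has nonzero scalar curvature), we conclude $c=1$, that is $T(R)=R$.
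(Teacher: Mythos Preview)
Your strategy is correct and matches the paper's: show that $T(R)$ again takes values in $\mathfrak{k}$, invoke Simons' uniqueness to get $T(R)=cR$, then use $sc(R)\neq 0$ to force $c=1$. The one place where your argument is only sketched---the ``delicate point'' that the Killing fields tangent to every $K$-orbit form exactly $\mathfrak{k}$---is handled in the paper by a clean trick you may want to adopt. Rather than appealing to polar/foliation generalities, the paper sets $\tilde{K}=\{g\in\mathrm{SO}(\mathbb{V}):g\text{ preserves every }K\text{-orbit}\}$ and observes that $[\mathbb{V},R,\tilde{K}]$ is again an irreducible non-transitive holonomy system (irreducible since $K\subset\tilde{K}$, non-transitive since $\tilde{K}$ preserves the nontrivial $K$-orbits). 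A second application of Simons then gives $\tilde{\mathfrak{k}}=\mathrm{span}\{R_{x,y}\}=\mathfrak{k}$ at once.

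The paper also reaches ``$T(R)_{x,y}$ is tangent to every $K$-orbit'' slightly more directly than you do: from $R_{v,\xi}=0\Rightarrow T(R)_{v,\xi}=0$ and the symmetry $\langle T(R)_{v,\xi}x,y\rangle=\langle T(R)_{x,y}v,\xi\rangle$ one gets immediately that $T(R)_{x,y}\cdot v\perp\nu_v(K.v)$ for all $v$, without first arguing that $T$ sends orbits to orbits. Your route via $T(\nu_v)=\nu_{Tv}$ and conjugating $K$ is fine, but the paper's computation is a line shorter.
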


\begin {proof}

Let $R'= T(R)$. If $\xi \in \nu _v(K.v) =
\{\xi \in \mathbb {V}: R_{v, \xi} = 0\}$,
then, from the assumptions, $R'_{v,\xi} =
T .R_{T(v),T(\xi )}.T^{-1} = 0$.
So, $ 0 = \langle R'_{v , \xi} x , y\rangle =
\langle R'_{x,y} v , \xi \rangle$,
for all $x,y \in \mathbb {V}$. Then the Killing field $R'_{x,y} \in
\mathfrak {so}(\mathbb {V})$ of
$\mathbb {V}$ is tangent to any orbit $K.v$. This implies that
$R'_{x,y} \in \tilde {\mathfrak {h}} = \text {Lie} (\tilde {K})$,
 where $\tilde {K} = \{g \in \text {SO}(\mathbb {V}): g\text { preserves any }
K\text {-orbit}\}$.
Observe that $\tilde {K}$ is a (compact) Lie subgroup of
$\text {SO}(\mathbb {V})$ which is non-transitive (on the unit sphere of
$\mathbb {V}$). Since $H\subset \tilde {K}$ we have that
$[\mathbb {V}, R, \tilde {K}]$ is also an
irreducible and non-transitive holonomy
system. From the Simons holonomy theorem we have that
$[\mathbb {V}, R, K]$ and $[\mathbb {V}, R,
\tilde {K}]$ are
both symmetric. Then $\mathfrak {h}$ and $\tilde {\mathfrak {h}}$
are (linearly) spanned by $R_{x,y}$, $x,y \in \mathbb {V}$.
Then $\mathfrak {h} = \tilde {\mathfrak {h}}$ and therefore,
$K = \tilde {K}$.

Since $R'$ takes values in $\tilde {\mathfrak {h}} = \mathfrak {h}$,
then $ [\mathbb {V}, R', K]$ is also an irreducible and non-transitive holonomy
system. Then, from the uniqueness part of Simons theorem,
$R' = \lambda R$, for some
scalar $\lambda \neq 0$. Since $T$ is an isometry, it induces an isometry on
the space of tensors. Then $\lambda = \pm 1$.
But $0\neq sc(R) = sc (R')$. Then $\lambda =1$ and hence $R' = R$.

\end {proof}

\begin {nota} \label {remarkholonomysystems} Let $M^n = K.v$, where
 $K$ acts (by linear isometries)
  on $\mathbb {R}^{n + \frac {1} {2} n(n+1)}$
as an $s$-representation ($\vert v\vert =1$).
Assume that the restricted
normal holonomy group $\Phi (v)$
acts irreducibly on $\bar {\nu}_v(M) = \{v\}^\perp
\cap \nu _v (M)$. In this case $M$ is a minimal submanifold of
the sphere $S^{n -1 + \frac {1} {2} n(n+1)}$ (see Corollary \ref {mainco}).

Let $A$ be the shape
operator of $M$ and let $Sim _0 (T_pM)$
be the space of  traceless
symmetric endomorphisms of $T_pM$.
Then the  map
$A: \bar {\nu}_v(M) \mapsto Sim _0 (T_vM) $
is a linear isomorphism. In fact, it is injective,
since the first normal space of $M$ coincides with the normal space, and
$\text {dim} (\bar {\nu} _v (M)) = \text {dim}(Sim _0(T_vM))$.
Moreover, by the second part of
Corollary \ref {mainco}, $A$ is a homothecy from
$\bar {\nu}_v(M)$  onto $ Sim _0 (T_vM)$, let us say, of constant $\beta > 0$.

Let us consider the following
two irreducible and symmetric holonomy systems:
$$[Sim _0 (T_pM) , R, \text {SO}(T_pM)] \text { \ and \ }
[\bar {\nu }_v (M), \mathcal {R}^\perp , \Phi (v)],$$
where $\mathcal {R}^\perp$ is the adapted normal curvature tensor of $M$
at $v$ and $R$ is the curvature tensor of $\text {Sl}(n)/\text {SO}(n)$
(which is explicitly  given by (***) of Section 1.3).

Observe that $[\bar {\nu }_v (M), \mathcal {R}^\perp , \Phi (v)]$
is symmetric since, by  Theorem \ref {slice holonomy},
the restricted normal holonomy group is given by
$$\Phi (v) = \{k_{\vert \nu _v(M)}: k \in (K_v)_0\}$$
and  $\mathcal {R}^\perp$ is left fixed by $K_v$.

Both algebraic curvature tensors are
related by the formula (**) of Section 1.1.
This implies that the homothecy $A$ maps
$\mathcal {R}^\perp$ into  $R$. Then the isometry $\beta ^{-1}A$ maps
$\mathcal {R}^\perp$ into $\beta ^4 R$.

Since in a symmetric irreducible holonomy system the Lie algebra of the
group is (linearly) generated by the curvature endomorphisms,
we conclude that $A$ maps $\Phi (v)$ onto $\text {SO}(T_pM)\simeq
\text {SO}(n)$. In particular, the two holonomy systems are equivalent and
$\Phi (v)\simeq \text {SO}(n)$.

\end {nota}

\subsection {Veronese submanifolds}

\

Let us consider the isotropy representation of the symmetric space
of the non-compact type
$X= \text {Sl}(n +1 )/\text {SO}(n+1)$
(which coincides with the isotropy representation of its compact dual
$\text {SU}(n+1)/ \text {SO}(n+1)$). The Cartan decomposition of such a space
is
$$\mathfrak {sl}(n+1) = \mathfrak {so}(n+1)\oplus  {Sim}_0 (n+1)$$
where ${Sim}_0 (n+1)$ denotes the traceless symmetric
(real) $(n+1)\times (n+1)$-matrices. The $\text {Ad}$-representation of
$\text {SO}(n+1)$ on $Sim _0 (n+1)$ coincides with
the action, by conjugation, of $\text {SO}(n+1)$ on  $Sim _ 0 (n+1)$.

The curvature tensor of $X$ at $[e]$ is given (up to a positive multiple)
by
$$R _{A,B}C = -[[A,B],C]$$
 and
$$ \langle R _{A,B}C , D\rangle  = -\langle [[A,B],C], D\rangle
= \langle [A,B], [C, D]\rangle \ \ \ \ \ \ \text {(***)}$$
where $A,B,C,D \in Sim _0 (n+1) \simeq T_{[e]}X$.

Let $S \in Sim _0 (n + 1)$ with exactly two eigenvalues, one of multiplicity
$1$ (whose  associated eigenspace we denote by  $E_1$)
and the other of multiplicity $n$ (whose
associated eigenspace we denote by  $E_2$).

The orbit
$V ^{n} =  \text {SO}(n+1) . S = \{k S k^{-1}: k \in \text {SO}(n+1)\} $
is called a {\it Veronese-type}
orbit (see Appendix).

The following assertions are easy to verify or  well-known.

\begin {cosa} \label   {Veronese facts}

\

\,

\rm

{ \ \it  (i)} The Veronese-type orbit \, $V^{n} = \text {SO}(n +1) . S$
\, is a full and irreducible submanifold of $Sim _0 (n+1)$ which
has dimension $n$ and codimension
$\frac 1 2 n (n+1)$. Moreover, $V^{n}$ is a minimal submanifold of the
sphere of radius $\Vert S\Vert$.

{\ \it  (ii)} An orbit of $\text {SO}(n+1)$ in $Sim _0 (n+1)$ has minimal dimension
if and only if it is of Veronese-type; see Lemma
\ref {minima}.

{\it (iii)} The  normal holonomy group at $S$, of the Veronese-type
orbit $V^{n}$,
coincides with image of the  slice representation of the
isotropy group
$(\text {SO}(n+1))_S = \text {S}(\text {O}(E_1)\times \text {O}(E_2))
\simeq
\text {S}(\text {O}(1)\times \text {O}(n))$. So, from (*), the restricted normal
holonomy representation, on $\bar {\nu }_S (V^{n}) =
\{S\}^\perp\cap {\nu }_S (V^{n})$,
is equivalent to the isotropy representation of the
symmetric space   $\text {Sl}(n)/ \text {SO}(n)$ of rank $n-1$. Then, this
normal holonomy representation is irreducible. Moreover, it is non-transitive
(on the unit sphere of $\bar {\nu }_S (V^{n})$) if and only if $n\geq 3$.

{\ \it  (iv)} A Veronese-type orbit
$V^{n} = \text {SO}(n+1).S = \text {SO}(n+1)/(\text {SO}(n+1))_S$
is intrinsically
a real projective space $\mathbb {R}P^{n}$. Moreover,
$(\text {SO}(n+1),(\text {SO}(n+1))_S)$ is a symmetric pair and so
$(\text {SO}(n+1))_S$ acts irreducibly on $T_SV^{n}$. Then, from
Corollary \ref {parallel}, $V^{n}$ has parallel second fundamental form
(as it is well known).

\qed

\end {cosa}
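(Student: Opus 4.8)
The plan is to verify the four assertions by combining the general results on $s$-representations recalled above with the explicit block structure of a matrix $S$ whose eigenspaces are $E_1$ (dimension $1$) and $E_2$ (dimension $n$). For (i), I would first compute the isotropy of the conjugation action: an element of $\text{SO}(n+1)$ fixes $S$ if and only if it preserves each eigenspace, so $(\text{SO}(n+1))_S = \text{S}(\text{O}(E_1)\times\text{O}(E_2)) \simeq \text{S}(\text{O}(1)\times\text{O}(n))$, which has dimension $\frac{1}{2}n(n-1)$. Hence $\dim V^n = \dim \text{SO}(n+1) - \frac{1}{2}n(n-1) = n$, and since $\dim Sim_0(n+1) = \frac{1}{2}n(n+3)$ the codimension is $\frac{1}{2}n(n+1)$. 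Fullness is immediate: the linear span of the orbit is an $\text{SO}(n+1)$-invariant subspace, and since the conjugation action on $Sim_0(n+1)$ is an irreducible $s$-representation and $S\neq 0$, this span is all of $Sim_0(n+1)$. Irreducibility as a submanifold then follows from the irreducibility of the isotropy action on $T_SV^n$ established in (iv), since any nontrivial extrinsic splitting would produce an isotropy-invariant decomposition of the tangent space.

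For the minimality in (i), the mean curvature vector $H(S)$ is fixed by the slice representation. Using the description $\nu_S(V^n)=\mathcal{C}(S)=\{x\in Sim_0(n+1): [x,S]=0\}$ from (*), the normal space consists precisely of the traceless symmetric endomorphisms that are block-diagonal with respect to $E_1\oplus E_2$, and a direct check shows that the vectors fixed by the isotropy form exactly the line $\mathbb{R}S$ (the $E_2$-block must be a multiple of the identity, and tracelessness then fixes the $E_1$-entry). Therefore $H(S)$ is proportional to the position vector $S$, so $V^n$ is minimal in the sphere of radius $\|S\|$. Assertion (ii) I would simply reduce to Lemma \ref{minima} of the Appendix, where the orbits of minimal dimension are classified as exactly the Veronese-type ones.

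For (iii), the identification of the normal holonomy group at $S$ with the image of the slice representation is exactly Theorem \ref{slice holonomy}, applicable because $V^n$ is full by (i). It remains to identify this slice representation on $\bar{\nu}_S(V^n)=\{S\}^\perp\cap\nu_S(V^n)$. Writing $S=\lambda_1 I_{E_1}\oplus\lambda_2 I_{E_2}$ and splitting the $E_2$-block of an element of $\mathcal{C}(S)$ into its trace part and its traceless part, I would check that the trace-part directions span exactly $\mathbb{R}S$, so that $\bar{\nu}_S(V^n)$ is isomorphic, as a module for the identity component $\text{SO}(E_2)\simeq\text{SO}(n)$, to $Sim_0(E_2)\simeq Sim_0(n)$ under conjugation. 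This is precisely the isotropy representation of $\text{Sl}(n)/\text{SO}(n)$, which is irreducible of rank $n-1$. Since an $s$-representation is transitive on the unit sphere if and only if its rank is $1$, non-transitivity holds exactly when $n-1\geq 2$, that is, when $n\geq 3$.

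Finally, for (iv) I would identify the homogeneous space $V^n=\text{SO}(n+1)/\text{S}(\text{O}(1)\times\text{O}(n))$ with $\mathbb{R}P^n$, and exhibit the symmetric-pair structure through the involution $\sigma(g)=sgs^{-1}$, where $s$ is the reflection fixing $E_1$ and reversing $E_2$, whose fixed-point group is $\text{S}(\text{O}(1)\times\text{O}(n))$. Since $\mathbb{R}P^n$ is an irreducible symmetric space, its isotropy representation --- the standard action of $\text{S}(\text{O}(1)\times\text{O}(n))$ on $T_SV^n\simeq\mathbb{R}^n$ --- is irreducible, so Corollary \ref{parallel} applies and yields that $V^n$ has parallel second fundamental form. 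The only genuinely computational point in the whole verification is the block-matrix bookkeeping underlying (i) and (iii), namely pinning down the isotropy-fixed line $\mathbb{R}S$ and the module identification $\bar{\nu}_S(V^n)\simeq Sim_0(n)$; everything else is a direct application of the results of Sections 2.1 and 2.2.
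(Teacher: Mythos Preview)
Your proposal is correct. The paper does not actually give a proof of these Facts: it introduces them with ``The following assertions are easy to verify or well-known'' and closes immediately with \qed. Your argument supplies exactly the verifications one would expect, using precisely the tools the paper has set up --- Theorem \ref{slice holonomy} for (iii), formula (*) for the normal space, Corollary \ref{parallel} for (iv), and Lemma \ref{minima} for (ii) --- together with the explicit block-diagonal description of $\mathcal{C}(S)$. The identification $\bar\nu_S(V^n)\simeq Sim_0(E_2)$ via the trace/traceless splitting of the $E_2$-block is the right computation, and your minimality argument (that the isotropy-fixed vectors in $\nu_S(V^n)$ form exactly the line $\mathbb{R}S$) is clean. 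There is nothing to compare against; you have simply written out what the paper leaves implicit.
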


 A submanifold $M\subset \mathbb {R}^N$ is called a
{\it Veronese submanifold} if it is extrinsically isometric to a Veronese-type
orbit.

\begin {prop} \label {n-Veronese} Let $M^{n}=
K.v \subset \mathbb {R}^{n + \frac {1}{2} n(n+1)}$,
where $K$  acts on
$\mathbb {R}^{n + \frac {1}{2} n(n+1)}$
as an $s$-representation ($n\geq 2$).
Assume that  the restricted normal holonomy group $\Phi (v)$
 of $M$ at $v$,
restricted to
$\bar {\nu} _v (M) =  \{v\}^\perp \cap \nu _v (M)$,
acts  irreducibly (eventually, in a transitive way). Then,

\rm \ (i)\it \  The normal holonomy representation of $\Phi (v)$ on
$\bar {\nu} _v (M)$ is equivalent to the isotropy representation
of the symmetric space $\text {\rm Sl}(n)/\text {\rm SO}(n)$.

 \rm (ii) \it \ $M^n$ is a Veronese submanifold.
\end {prop}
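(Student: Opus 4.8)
The plan is to leverage the machinery of Remark \ref{remarkholonomysystems} together with the rigidity lemma for extrinsically symmetric submanifolds (Lemma \ref{proportional}). First I would establish part (i). By Corollary \ref{mainco}, since $\Phi(v)$ acts irreducibly on $\bar{\nu}_v(M)$, the submanifold $M$ is minimal in the sphere $S^{n-1+\frac12 n(n+1)}$ and the shape operator map $A:\bar{\nu}_v(M)\to Sim_0(T_vM)$ is a homothecy onto its image. Because the first normal space of a full orbit of an $s$-representation coincides with the full normal space, this $A$ is injective; and since $\dim\bar{\nu}_v(M)=\frac12 n(n+1)-1=\dim Sim_0(T_vM)$, it is in fact a linear isomorphism, hence a homothecy onto all of $Sim_0(T_vM)$. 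Now consider the two irreducible symmetric holonomy systems $[\bar{\nu}_v(M),\mathcal{R}^\perp,\Phi(v)]$ and $[Sim_0(T_vM),R,\text{SO}(T_vM)]$, where $R$ is the curvature tensor $(\text{***})$ of $\text{Sl}(n)/\text{SO}(n)$. The first is symmetric because $\Phi(v)$ equals the slice representation of $(K_v)_0$ (Theorem \ref{slice holonomy}) and $\mathcal{R}^\perp$ is fixed by $K_v$. Formula $(\text{**})$ of Section 1.1 expresses $\mathcal{R}^\perp$ in terms of commutators of shape operators, which is exactly how $R$ is built from commutators in $(\text{***})$; therefore the homothecy $A$ carries $\mathcal{R}^\perp$ to a positive multiple of $R$. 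Since in an irreducible symmetric holonomy system the Lie algebra is linearly spanned by the curvature endomorphisms, $A$ conjugates $\mathfrak{h}=\text{Lie}(\Phi(v))$ onto $\mathfrak{so}(T_vM)$, so the two holonomy systems are equivalent and the normal holonomy representation is equivalent to the isotropy representation of $\text{Sl}(n)/\text{SO}(n)$. This proves (i).

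For part (ii), the aim is to identify $M$ itself with a Veronese-type orbit, not merely its normal holonomy. I would first argue that $M$ is extrinsically symmetric. The key is Corollary \ref{parallel}: if the isotropy $K_v$ acts irreducibly on $T_vM$, then $M$ has parallel second fundamental form. Part (i) gives strong control on the slice representation, but the irreducibility we have is on $\bar{\nu}_v(M)$ rather than directly on $T_vM$; so the step I would carry out here is to transfer this. Via the homothecy $A$ identifying $\bar{\nu}_v(M)$ with $Sim_0(T_vM)$, and the identification of $\Phi(v)$ with $\text{SO}(T_vM)$ acting on $Sim_0(T_vM)$ by the $\text{Sl}(n)/\text{SO}(n)$ isotropy representation, one reads off that the relevant symmetric-pair structure forces $K_v$ to act irreducibly on the tangent space, exactly as in the Veronese-type case described in Facts \ref{Veronese facts}(iv). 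Hence by Corollary \ref{parallel}, $M$ has parallel second fundamental form and is extrinsically symmetric.

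Finally I would compare $M$ with the model Veronese-type orbit $V^n=\text{SO}(n+1).S\subset Sim_0(n+1)$. Both $M$ and $V^n$ are full, irreducible, extrinsically symmetric submanifolds of the sphere in an ambient space of the same dimension $n+\frac12 n(n+1)$, and both have the same (equivalent) normal holonomy and the same isomorphism type of shape-operator space $Sim_0(T_pM)\cong Sim_0(n)$. After placing them so that they share a common point $p$ with $T_pM=T_pV^n$, the second fundamental forms at $p$ are proportional: both are determined, via the homothecy and the curvature data of $(\text{**})$ and $(\text{***})$, by the same algebraic tensor up to scale. Lemma \ref{proportional} then applies and yields $M=V^n$ or $M=\sigma(V^n)$ for the reflection $\sigma$, so that $M$ is a Veronese submanifold in either case. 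I expect the main obstacle to be exactly the transfer step in part (ii): promoting the irreducibility of $\Phi(v)$ on $\bar{\nu}_v(M)$ to irreducibility of $K_v$ on $T_vM$ (and verifying the symmetric-pair structure carefully enough to invoke Corollary \ref{parallel}), and then checking that the shape operators of $M$ and the model coincide up to scale in a way that matches the hypotheses of Lemma \ref{proportional}, rather than merely being abstractly isomorphic.
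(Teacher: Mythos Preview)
Your overall architecture matches the paper's proof: part (i) via Remark \ref{remarkholonomysystems}, then for (ii) show $(K_v)_0$ acts irreducibly on $T_vM$ to invoke Corollary \ref{parallel}, and finish with Lemma \ref{proportional} against a model Veronese. The place where your proposal is genuinely incomplete is precisely the ``transfer step'' you flagged, and the paper's argument there is short and concrete rather than structural. From part (i) you already know $\Phi(v)\simeq \text{SO}(n)$, hence $\dim\Phi(v)=\dim\text{SO}(n)$. Since the slice representation of $(K_v)_0$ surjects onto $\Phi(v)$ (Theorem \ref{slice holonomy}), $\dim K_v\ge \dim\text{SO}(n)=\dim\text{SO}(T_vM)$. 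On the other hand the isotropy representation of $K_v$ on $T_vM$ is \emph{faithful}: if $g\in K_v$ acts trivially on $T_vM$ then, being an isometry of $M$ fixing $v$ with identity differential, $g$ fixes $M$ pointwise, so $M$ lies in the fixed subspace of $g$, contradicting fullness unless $g=e$. Combining the two inequalities forces $(K_v)_0=\text{SO}(T_vM)$, which is certainly irreducible on $T_vM$. This is the missing idea; your ``read off from the symmetric-pair structure'' does not by itself produce it.

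For the final comparison you also need one more line than you wrote. Simply placing $M$ and $V^n$ with $T_vM=T_vV^n$ does not yet make the second fundamental forms proportional; you must first apply an ambient isometry $g$ that is the identity on $\mathbb{R}v\oplus T_vM$ and equals $h^{-1}$ on $\bar\nu_v(M)$, where $h=\beta^{-1}A^{-1}\circ\bar A$ is the isometry obtained from the two homothecies $A,\bar A:\bar\nu_v\to Sim_0(T_vM)$. A direct computation then gives that the second fundamental form (in the sphere) of $g(M)$ at $v$ equals $\beta^{-1}$ times that of $V^n$, which is exactly the hypothesis of Lemma \ref{proportional}. With these two points supplied, your proposal becomes the paper's proof.
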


\begin {proof}

Part {\it (i)} is a consequence of Remark
\ref {remarkholonomysystems}.

Since $K$ acts as an $s$-representation, then
  the image under the slice representation,
of the (connected) isotropy
group $(K_v)_0$,
coincides with the restricted  normal holonomy group $\Phi (v)$. But, from
part (i),
$\text {dim}(\Phi (v)) = \text {dim}(\text {SO}(n))$
Then the isotropy group $K_v$ has dimension at least $\text {dim} ({SO}(n))=
\text {dim} ({SO}(T_vM))$.

Observe that the isotropy representation of $K_v$ on $T_vM$
is faithful. Otherwise, $M$ would be
contained in the proper subspace which consists of the
 fixed vector of $K_v$ in  $\mathbb {R}^N$.

Then, $(K_v)_0 = \text {SO}(T_vM)$. So, $K_v$ acts irreducibly  on $T_vM$.
Then, from Corollary \ref {parallel}, $M$ has parallel second fundamental form.

Let $V^n$ be a Veronese submanifold of
$\mathbb {R}^{n+ \frac {1}{2}n(n+1)}$. We may
assume that $v\in V^n$ and that $T_vM = T_vV^n =
\mathbb {R}^n \subset \mathbb {R}^{n + \frac {1}{2}n(n+1)}$.
For $V^n$ we have, from Corollary \ref {mainco}
and Remark \ref {remarkholonomysystems},
 that its shape operator
 $\bar {A} : \{v\}^\perp \cap \nu _v (V^n) =
\{v\}^\perp \cap \nu _v (M) \to
Sim _0 (T_vV^n) = Sim _0 (T_vM)$  is a homothecy which induces an isomorphism
from the normal holonomy group
$\bar {\Phi} (v)$ of $V^n$ onto $\text {SO} (n) $.

The same is true, again from Corollary \ref {mainco}
and Remark \ref {remarkholonomysystems},
for the shape operator $A$ of $M$. Namely,
$ {A} : \{v\}^\perp \cap \nu _v (M)  \to
Sim _0 (T_vV^n) = Sim _0 (T_vM) = Sim _0 (\mathbb {R}^n)$
is a homothecy which induces an isomorphism
from the (restricted) normal holonomy group
$ {\Phi} (v)$ of $M$ onto $\text {SO} (n)$.
Then the map $ A^{-1} \circ \bar {A}$ is a homothecy
with constant, let us say,  $\beta > 0$, of the space
$\{v\}^\perp \cap \nu _p (M)$.
Let
 $h =
\beta ^{-1} A ^{-1} \circ \bar {A}$.
Then $h$ is a linear isometry  of
$\{v\}^\perp \cap \nu _v(M)$.

Let now $g$ be the linear isometry of $\mathbb {R}^ {n + \frac 1 2 n(n+1)}$ defined
by the following properties:

{\it \ \ (i)} $g(v)=v$.

{\it \  (ii)} $g_{\vert \{v\}^\perp \cap \nu _v (M)} = h^{-1}$.

{\it   (iii)} $g_{\vert  T_v M} = \text {Id}$.

\,

Then $V^n$ and $g(M)$  have proportional second fundamental forms and satisfy
all the other assumptions of
 Lemma  \ref {proportional}. Then, by this lemma, $g(M)$, and hence $M$,
 is a Veronese
submanifold.

\end {proof}

\subsection {Coxeter groups and holonomy systems}

\

The goal of this section is to prove Proposition
\ref {holonomy systems}
that will be  important for proving our main
theorems. In order to prove this proposition we need
 some basic results, related to  Coxeter groups,
that we have not found through the mathematical literature. So, and
also for the sake of self-completeness, we include the proofs.

\begin {lema} \label {finite group}
Let $C$ be a Coxeter group acting irreducibly, by linear isometries, on
the Euclidean $n$-dimensional vector space
$(\mathbb V , \langle \, , \, \rangle)$.
Let $H_1, ... , H_r$ be the family of (different)
reflection hyperplanes, associated
to the symmetries of $C$ (that generates $C$). Let us define the group   $G = \{g
\in \text {End}(\mathbb V):\, g \text { permutes }
H_1, ..., H_r \text { and } \text {det}(g) = \pm 1\}.$
 Then $G$ is finite.
\end {lema}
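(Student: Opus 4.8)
The plan is to exhibit $G$ as a finite extension of a finite group by analysing the natural permutation action on the hyperplanes. First I would record that every $g\in G$ is invertible, since $\det(g)=\pm1\neq0$, and that $g$ genuinely \emph{permutes} the finite family $\{H_1,\dots,H_r\}$, producing a group homomorphism
\[
\rho\colon G\longrightarrow \operatorname{Sym}\{H_1,\dots,H_r\}\cong S_r .
\]
The target is finite, so everything reduces to showing that $\ker\rho$ is finite. This is exactly where the subtlety lies, and it is the step I expect to be the main obstacle: \emph{a priori $G$ is only a subgroup of $\operatorname{SL}^{\pm}(\mathbb{V})$, not of the orthogonal group}. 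For a single hyperplane the group of determinant-$\pm1$ maps preserving it is infinite, so the conclusion must genuinely use irreducibility and the reflection structure of $C$, not merely the combinatorics of an arrangement. I would also note once, for later use, that the $H_i$ form a $C$-invariant arrangement (the generating reflections lie in $C$, and a conjugate $s\,s_{H}\,s^{-1}$ is the reflection in $s(H)$, so $C$ permutes its reflecting hyperplanes) and that their normals span $\mathbb{V}$: otherwise $\bigcap_i H_i$ would be a nonzero proper $C$-invariant subspace, contradicting irreducibility.

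Next I would study $\ker\rho$. Writing $H_i=\ker\alpha_i$ for a normal covector $\alpha_i\in\mathbb{V}^{*}$, the condition $g(H_i)=H_i$ is equivalent to $\ker(\alpha_i\circ g^{-1})=\ker\alpha_i$, i.e. $g^{*}\alpha_i=\lambda_i\,\alpha_i$ for some scalar $\lambda_i\neq0$. Thus every $g\in\ker\rho$ has $g^{*}$ diagonalised by the family $\{\alpha_i\}$, which spans $\mathbb{V}^{*}$. It then remains to force all the eigenvalues $\lambda_i$ to coincide. To do this I would invoke the reflection structure: for two hyperplanes $H_i,H_j$ with $\langle\alpha_i,\alpha_j\rangle\neq0$, the orthogonal reflection $s_j\in C$ sends $\alpha_i$ to
\[
s_j\alpha_i=\alpha_i-\frac{2\langle\alpha_i,\alpha_j\rangle}{\langle\alpha_j,\alpha_j\rangle}\,\alpha_j,
\]
which is the normal covector of another hyperplane $s_j(H_i)$ of the ($C$-invariant) arrangement, hence again an eigenvector of $g^{*}$. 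Since $\langle\alpha_i,\alpha_j\rangle\neq0$ this is a nontrivial combination of the eigenvectors $\alpha_i,\alpha_j$, so it can be an eigenvector of $g^{*}$ only if $\lambda_i=\lambda_j$.

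Finally I would propagate this equality across the whole family. The relation ``$\langle\alpha_i,\alpha_j\rangle\neq0$'' defines a graph on $\{1,\dots,r\}$ which is connected: a nontrivial partition into two mutually orthogonal blocks would split $\mathbb{V}$ into a nonzero proper orthogonal sum of $C$-invariant subspaces (each generating reflection acts on one block and fixes the other pointwise), again contradicting irreducibility. Connectedness gives $\lambda_1=\dots=\lambda_r=:\lambda$, whence $g^{*}=\lambda\,\operatorname{id}$ and therefore $g=\lambda\,\operatorname{id}$; the constraint $\det(g)=\pm1$ forces $\lambda=\pm1$. Thus $\ker\rho\subseteq\{\pm\operatorname{id}\}$, and combining with the finiteness of $\operatorname{Im}\rho\le S_r$ yields $|G|\le 2\,r!<\infty$, proving that $G$ is finite.
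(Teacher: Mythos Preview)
Your proof is correct and follows essentially the same strategy as the paper's: reduce to the kernel of the permutation homomorphism, observe that kernel elements act diagonally on the normals (the paper phrases this via $g^t$ on the normal lines $L_i$, you via $g^*$ on covectors $\alpha_i$), then use reflections together with irreducibility to force all eigenvalues equal, and finally invoke $\det(g)=\pm1$. The only cosmetic difference is that the paper packages the key step as ``generic $2$-planes $\mathbb{V}_{i,j}$ containing at least three lines $L_k$'' and an induced equivalence relation, whereas you argue directly that $s_j\alpha_i$, being a genuine combination of two eigenvectors, forces $\lambda_i=\lambda_j$; these are the same observation.
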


\begin {proof} Let $P_r$ be the (finite) group of bijections  of
 the set $\{1, ... , r\}$.
Let $\rho : G \to P_r$ be the group morphism defined by
$\rho (g) (i) = j$, if $g (H_i) = H_j$. The group $G$ is finite
if and only if $\text {ker}(\rho)$ is finite. Let us  prove
that $\text {ker}(\rho)$  is finite. If $g\in \text {ker}(\rho)$ then it induces
the trivial permutation on the family $H_1 , ... , H_r$. Then, its transpose
$g^t$, with respect to $\langle \, , \, \rangle$, induces the trivial
permutation
on the set of lines $L_1 , ... , L_r$,
where $L_i$ is the line which is perpendicular
to $H_i$, $i = 1, ... , r$ (and hence, any vector in any line
 $ L_1 , ... , L_r$
is
an eigenvector  of $g^t$.
Let us define, for $i\neq j$,
the $2$-dimensional
subspace $\mathbb {V}_{i,j} := \text {linear span of } (L_i\cup L_j)$. This
subspace
is called {\it generic} if there exists $k \in \{1, ... , r\}$, $i\neq k \neq
j$
such
that $L_k \subset \mathbb {V}_{i,j}$. In other words, $\mathbb {V}_{i,j}$ is
generic
if there are at least three different  lines of $\{ L_1 , ... , L_r\}$ which are
contained in   $\mathbb {V}_{i,j}$.
We have, if $\mathbb {V}_{i,j}$ is generic,
 that $g^t : \mathbb {V}_{i,j} \to \mathbb {V}_{i,j}$ is a scalar multiple of
the identity $ \text {Id}_{i,j}$  of  ${V}_{i,j}$.
In fact, any vector in  $L_i\cup L_j \cup L_k$ is an
eigenvector of
 $(g^t)_ {\vert {V}_{i,j}}$. Then, since $\text {dim}({V}_{i,j}) = 2$,
$(g^t)_ {\vert {V}_{i,j}} = \lambda \text {Id}_{i,j}$, for some
$\lambda \in \mathbb {R}$.
Let us define the following equivalence relation $\sim $ on
the set $\{ 1, ... , r\}$:
$i\sim i'$ if there exist $ i_1 , ... , i_l \in  \{1, ... , r\}$
 with $i_1 = i$, $i_l = i'$ and such that
$\mathbb {V}_{i_s, i _{s+1}}$ is generic, for $s=1, ... , l-1$.
Let $i \in \{1, ... , r\}$ be fixed.
 By the previous observations one has that there must exist
$\lambda \in \mathbb {R}$ such that
for any $j\in [i]$ (the equivalence class of $i$) and for any $v_j\in L_j$,
$g^t (v_j) = \lambda v_j$. In order to prove this lemma, it suffices to
show that there is only one equivalence class on $\{1, ... , r\}$.
In fact, if $[i] = \{1, ... , r\}$, then $g^t = \lambda Id$, since
 $L_1, ... , L_r$ span $ \mathbb {V}$ (because of
its othogonal complement
 is point-wise fixed by $C$).
 So
$g = \lambda Id$.
But
$\text {det}(g) =   \pm 1$. Then $\lambda ^ n  = \pm 1$ and hence
$\lambda = \pm 1$.  So, $g = \pm Id$ and therefore there are at most
two elements in $\text {ker} (\rho)$.

Let $i \in \{1, ... ,r\}$ be fixed. Let us show that $[i] =
\{1, ... ,r\}$. If $j\notin [i]$  then $L_j$ is perpendicular to
any $L_k$, for all $k \in [i]$. In fact,
assume that  this is not true for some $k \in [i]$.
Let $s_j \in C$
be
the symmetry across the hyperplane $H_j$. Then
$s_j (L_k)$ is a line, which belongs  to  $\{L_1, ... , L_r\}$,  that
 is contained
in $\mathbb V _{k,j}$ and it  is different
from both $L_k$ and $L_j$. Then $j\sim k$ and therefore $j \sim i$.
A contradiction.
Then, if $j\notin [i]$, $L_k \subset H_j$, for all $k\in [i]$.
So, $s_j$ acts trivially on $\mathbb V _{[i]}$, the subspace spanned
by $\bigcup _{k\in [i]} L_k$. Observe
that $s_j$ commutes with $s_k$,
for all $k\in [i]$. Let now $\mathbb {V} _0$ be the maximal subspace of
$\mathbb {V}$ such that it is point-wise fixed  by all the symmetries
$s_j$ with $j\notin [i]$. Observe that this space is not the null subspace,
since $\mathbb V _{[i]} \subset \mathbb {V}_0$.
If there exists
$j\notin [i]$, then $\mathbb {V}_0 $ must be a proper subspace of $\mathbb {V}$,
since $s_j\neq \text {Id}$.
 On the other hand, if $k\in [i]$, then
$s_k (\mathbb {V}_0 ) \subset \mathbb {V}_0 $, since
$s_k$ commutes with all the symmetries $s_j$, $j\notin [i]$. Then
$\mathbb {V}_0$ is a proper and non-trivial subspace of $\mathbb {V}$ which is
invariant under the irreducible Coxeter group $C$. A contradiction. So, $[i]
=\{ 1, ... , r\}$.

\end {proof}

\begin {lema} \label {permutes hyperplanes}. We are under the assumptions and
notation of the above lemma. Then $G$ acts by isometries.
\end {lema}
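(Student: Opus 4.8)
The plan is to exploit the finiteness of $G$, just established, together with the irreducibility of the Coxeter group $C$, via a standard averaging argument. First I would observe that $G$ is a finite subgroup of $\GL(\mathbb{V})$: every $g\in G$ has $\det(g)=\pm1\neq0$, so it is invertible, and the two defining conditions (permuting the hyperplanes and having determinant $\pm1$) are clearly closed under composition and inversion. Moreover $C\subseteq G$, because each generating reflection $s_i$ permutes the family $H_1,\dots,H_r$ (this set of hyperplanes is stable under $C$, as is already used in the proof of the previous lemma) and satisfies $\det(s_i)=-1$.

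Next I would produce a $G$-invariant inner product by averaging the given one over the finite group:
$$(v,w):=\frac{1}{|G|}\sum_{g\in G}\langle gv, gw\rangle.$$
This is where the finiteness from Lemma \ref{finite group} is essential. By construction $(\,,\,)$ is a positive definite inner product preserved by every element of $G$, hence in particular by $C$ (since $C\subseteq G$); and the original $\langle\,,\,\rangle$ is also $C$-invariant, as $C$ acts by linear isometries.

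The key step is to show that these two $C$-invariant inner products are proportional. I would write $(v,w)=\langle Sv,w\rangle$ for the unique operator $S$; symmetry of both forms makes $S$ self-adjoint with respect to $\langle\,,\,\rangle$, and positive definiteness of $(\,,\,)$ makes $S$ positive definite. Since both forms are $C$-invariant and each $g\in C$ is orthogonal for $\langle\,,\,\rangle$, a short computation gives $(gv,w)=(v,g^{-1}w)=\langle Sv,g^{-1}w\rangle=\langle gSv,w\rangle$, so $Sg=gS$ for all $g\in C$, i.e. $S$ commutes with the $C$-action. Now I would invoke irreducibility: $S$ is self-adjoint, hence diagonalizable with real eigenvalues, and each of its eigenspaces is a $C$-invariant real subspace. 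Because $C$ acts irreducibly on $\mathbb{V}$, every nonzero eigenspace must equal $\mathbb{V}$; as $S\neq0$ is diagonalizable, this forces a single eigenvalue, so $S=\lambda\,\Id$ with $\lambda>0$. Note that this uses only real irreducibility, not absolute irreducibility, precisely because self-adjointness already guarantees a real eigenspace decomposition.

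Finally, $S=\lambda\,\Id$ gives $(\,,\,)=\lambda\langle\,,\,\rangle$, so the two inner products differ only by a positive scalar. Since $G$ preserves $(\,,\,)$, it therefore preserves $\langle\,,\,\rangle$ as well, which is exactly the assertion that $G$ acts by isometries. Everything apart from the reduction $S=\lambda\,\Id$ is formal, so I expect that eigenspace/irreducibility step to be the crux of the argument.
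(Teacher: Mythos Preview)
Your proof is correct and follows essentially the same approach as the paper: average the inner product over the finite group $G$ to get a $G$-invariant form, note $C\subset G$ so both forms are $C$-invariant, and use irreducibility of $C$ to conclude the two inner products are proportional. The only difference is that you spell out the Schur-type argument with the operator $S$ and its eigenspaces, whereas the paper simply invokes irreducibility in one line.
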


\begin {proof}
 By the above lemma, $G$ is finite. By averaging the inner product
$\langle \, , \, \rangle$
over the elements of $G$, we obtain a $G$-invariant inner product $(\, , \, )$
on $\mathbb V$. Since $C\subset G$, then $(\, , \, )$ is $C$-invariant.
Since $C$ acts irreducible, $\langle \, , \, \rangle$ must be proportional to
$(\, , \, )$. Then $G$ acts by isometries on
$(\mathbb {V} , \langle \, , \, \rangle)$.
\end {proof}

\begin {cor} \label {same hyperplanes} Let
$(\mathbb {V}_i , \langle \, , \, \rangle _i)$ be a Euclidean vector spaces
and let $C_i$ be a Coxeter group acting  irreducibly, by
linear isometries, on $(\mathbb {V}_i , \langle \, , \, \rangle _i)$,
$i=1,2$. Let
$\ \ \ \ \ \ \ \ \ \ \ \ \ \ \ \ h: \mathbb {V} _1 \to \mathbb {V} _2$ be a linear
map such that it induces a bijection from the family of reflection
hyperplanes of
$C_1$ into the  family of reflection hyperplanes of $C_2$. Then
$h$ is a homothetical map.
\end {cor}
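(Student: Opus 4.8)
The plan is to reduce the statement to a single invariance assertion: pull back the inner product of $\mathbb{V}_2$ along $h$ and show that the resulting bilinear form on $\mathbb{V}_1$ is $C_1$-invariant, whence proportional to $\langle\,,\,\rangle_1$ by irreducibility. Throughout let $H_1,\dots,H_r$ denote the reflection hyperplanes of $C_1$ and $H_1',\dots,H_r'$ those of $C_2$, labelled so that $h(H_i)=H_i'$.

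First I would establish that $h$ is a linear isomorphism and that $\dim\mathbb{V}_1=\dim\mathbb{V}_2$. Since any two distinct reflection hyperplanes of $C_2$ already span $\mathbb{V}_2$, and each $H_i'=h(H_i)\subseteq h(\mathbb{V}_1)$, the map $h$ is surjective. Writing $k=\dim\ker h$, surjectivity gives $\dim\mathbb{V}_1=\dim\mathbb{V}_2+k$; comparing $\dim h(H_i)=\dim\mathbb{V}_2-1$ with $\dim H_i-\dim(H_i\cap\ker h)$ then forces $\ker h\subseteq H_i$ for every $i$, hence $\ker h\subseteq\bigcap_i H_i$. By irreducibility of $C_1$ this intersection (the fixed set of $C_1$) is $\{0\}$, so $h$ is injective and therefore invertible, with $\dim\mathbb{V}_1=\dim\mathbb{V}_2=:n$.

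Next, for each generating reflection $s_i$ of $C_1$ I would examine the conjugate $h s_i h^{-1}\in\operatorname{End}(\mathbb{V}_2)$. Because $s_i$ permutes the family $\{H_j\}$ and $h$ carries $\{H_j\}$ bijectively onto $\{H_j'\}$, the endomorphism $h s_i h^{-1}$ permutes the reflection hyperplanes of $C_2$; moreover $\det(h s_i h^{-1})=\det(s_i)=-1$. Hence $h s_i h^{-1}$ lies in the group $G$ attached to $(\mathbb{V}_2,C_2)$ in Lemma \ref{finite group}, and by Lemma \ref{permutes hyperplanes} it is an isometry of $(\mathbb{V}_2,\langle\,,\,\rangle_2)$.

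Finally I would set $(x,y):=\langle h(x),h(y)\rangle_2$, a genuine inner product on $\mathbb{V}_1$ since $h$ is invertible. The previous step shows $(s_i x,s_i y)=(x,y)$ for every generating reflection $s_i$, so $(\,,\,)$ is $C_1$-invariant; irreducibility of $C_1$ then forces $(\,,\,)=\lambda\langle\,,\,\rangle_1$ for some $\lambda>0$, i.e. $\langle h x,h y\rangle_2=\lambda\langle x,y\rangle_1$, which is precisely the homothety assertion. The only delicate point is the first step: the invariance argument is immediate once $h$ is known to be invertible with $\dim\mathbb{V}_1=\dim\mathbb{V}_2$, so the main obstacle is the dimension bookkeeping showing $\ker h$ lies in every $H_i$ and hence vanishes.
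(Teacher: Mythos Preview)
Your argument is correct and follows essentially the same route as the paper: pull back $\langle\,,\,\rangle_2$ via $h$, use Lemma~\ref{permutes hyperplanes} to show the conjugated group acts by isometries, and conclude by Schur's lemma from irreducibility. The only differences are cosmetic---the paper conjugates $C_2$ back to $\mathbb{V}_1$ and invokes the irreducibility of $C^*=h^{-1}C_2h$, whereas you push the generators of $C_1$ forward to $\mathbb{V}_2$ and invoke the irreducibility of $C_1$---and you supply the invertibility check for $h$ that the paper tacitly assumes.
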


\begin {proof}

Let $(\, , \, ) = h^* (\langle \, , \, \rangle _2)$ and let
$C^* = h^* (C_2) = h^{-1} C_2 h$.
Observe that the determinant of any element of $C_2$ is $\pm 1$, since
it is an isometry of
$(\mathbb {V}_2 , \langle \, , \, \rangle _2)$. So, any element in
$C^*$ has determinant $\pm 1$. From the assumptions, we obtain that the
family of reflection hyperplanes of the irreducible Coxeter  group $C^*$ of
$(\mathbb {V}_1, (\, , \, ))$  coincides with the
family $H_1, ... , H_r$ of reflection hyperplanes of $C_1$. Then any element
of $C^*$ induces a permutation in this family of hyperplanes.
Then, by Lemma \ref {permutes hyperplanes}, $C^*$ acts by isometries on
$(\mathbb {V}_1 , \langle \, , \, \rangle _1)$. Since $C^*$ acts irreducibly,
one has that $\langle \, , \, \rangle _1$ is proportional to $(\, , \, )$.
This implies that $h$ is a homothecy

\end {proof}

\begin {prop} \label {holonomy systems}

Let $(\mathbb V  , R , K)$ and $(\mathbb V '  , R' , K')$ be
irreducible,  non-transitive (and hence symmetric) holonomy systems.
Let $h: \mathbb V \to \mathbb V '$ be a linear isomorphism such that,
for any $K$-orbit $K.v$ in $\mathbb V$, $h(\nu _v (K.v)) =
\nu _{h(v)} (K'.h(v))$, where $\nu $ denotes the normal space.
Then $h$ is a homothecy and $h^{-1}_*(K') = K$.

\end {prop}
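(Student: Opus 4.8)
The plan is to reduce Proposition~\ref{holonomy systems} to the Coxeter-group machinery of Corollary~\ref{same hyperplanes}, by manufacturing, from each symmetric holonomy system, a natural Coxeter group whose reflection hyperplanes are determined precisely by the normal-space data that the hypothesis on $h$ controls. First I would use the structure recalled after Theorem~\ref{Simons}: since $(\mathbb V, R, K)$ and $(\mathbb V', R', K')$ are irreducible and symmetric, each acts as an irreducible $s$-representation, so I may pass to the associated symmetric pairs $\mathfrak g = \mathfrak h \oplus \mathbb V$ and $\mathfrak g' = \mathfrak h' \oplus \mathbb V'$. Fix a maximal abelian subspace (a Cartan subspace) $\ga \subset \mathbb V$ for the $K$-action. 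The restricted roots of this symmetric pair determine, inside $\ga$, the singular hyperplanes, and the associated reflections generate the restricted Weyl group $W$, which is a Coxeter group acting irreducibly on $\ga$ (irreducibility of the holonomy system forces the symmetric space to be irreducible, hence the restricted root system, and thus $W$, to be irreducible).

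The key geometric point is that the reflection hyperplanes of $W$ are intrinsically characterized through the normal-space function $v \mapsto \nu_v(K.v)$. By Remark~\ref{+simons} one has $\nu_v(K.v) = \{\xi : R_{v,\xi} = 0\}$, so the dimension of the normal space jumps exactly when $v \in \ga$ crosses a singular hyperplane; equivalently, the walls of the Weyl chambers in $\ga$ are detected by the locus where $\dim \nu_v(K.v)$ exceeds its generic value. Next I would show that $h$ carries $\ga$ to a Cartan subspace $\ga'$ of $\mathbb V'$: since $h$ intertwines the normal-space assignments, it must carry a generic orbit-normal configuration to a generic one, and carry the maximal flat on which these normal spaces are ``as abelian as possible'' to the corresponding flat for $K'$. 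Restricting $h$ to $\ga$ then gives a linear isomorphism $\ga \to \ga'$ that, because it matches the jump-loci of the normal-space dimension, sends the reflection hyperplanes of $W$ bijectively onto those of $W'$.

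At that stage Corollary~\ref{same hyperplanes} applies directly: a linear map between the ambient spaces of two irreducible Coxeter groups that induces a bijection of reflection hyperplanes is a homothecy. Hence $h|_{\ga}$ is a homothecy, and by irreducibility and $K$-equivariance (orbits through $\ga$ fill $\mathbb V$) this propagates to show $h$ is a homothecy on all of $\mathbb V$. Finally, once $h$ is a homothecy, $h_*$ conjugates $\so(\mathbb V)$-valued curvature tensors isometrically, so $\frac{1}{\lambda} h$ (with $\lambda$ the homothety constant) carries the symmetric holonomy system $(\mathbb V, R, K)$ to one of the form $(\mathbb V', \mu R', K')$; the uniqueness clause of Simons' theorem (Theorem~\ref{Simons}) together with the nonvanishing of the scalar curvature, exactly as in the proof of Lemma~\ref{T(R)}, forces the transported curvature tensor to be a positive multiple of $R'$, whence the transported group must coincide with $K'$, i.e. $h^{-1}_*(K') = K$.

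I expect the main obstacle to be the middle step: rigorously proving that $h$ maps a Cartan subspace to a Cartan subspace and matches the singular hyperplanes, purely from the orbit-normal-space hypothesis. The delicate part is ruling out degenerate behavior where $h$ might distort the stratification by orbit type; this is where the characterization $\nu_v(K.v) = \{\xi : R_{v,\xi} = 0\}$ and the constancy of generic normal dimension must be leveraged carefully so that the wall structure is genuinely transported, rather than merely the top-dimensional stratum.
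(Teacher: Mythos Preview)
Your approach is essentially the paper's: for an $s$-representation, a Cartan subspace $\ga$ is exactly the normal space $\nu_{v_0}(K.v_0)$ at a principal vector $v_0$, and the Weyl walls in $\ga$ are precisely the singular locus, so your ``Coxeter group on $\ga$'' is the same object as the paper's Coxeter group of the isoparametric orbit $K.v_0$. In particular, the step you flag as the main obstacle (that $h$ carries $\ga$ to a Cartan subspace and matches the walls) is immediate: by hypothesis $h$ sends $\nu_{v_0}(K.v_0)$ to $\nu_{h(v_0)}(K'.h(v_0))$, and since it preserves the dimension of all normal spaces it sends principal vectors to principal vectors and the singular hyperplanes to the singular hyperplanes, so Corollary~\ref{same hyperplanes} applies directly.

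The real gap is elsewhere: your propagation from ``$h|_{\ga}$ is a homothecy'' to ``$h$ is a homothecy on $\mathbb V$'' appeals to $K$-equivariance of $h$, but $h$ is not assumed $K$-equivariant---indeed $h_*^{-1}(K')=K$ is part of the \emph{conclusion}. Knowing that $K\cdot\ga=\mathbb V$ is of no help without equivariance. What you actually get from the Coxeter argument is a homothecy constant $\lambda(q)$ on \emph{each} normal space $\nu_q(K.v_0)$ (equivalently, on each Cartan subspace), and you must show these constants agree. The paper does this by observing that for any curvature sphere $S_i(q)$ of the isoparametric submanifold $K.v_0$ and any $x\in S_i(q)$, the normal spaces $\nu_x$ and $\nu_q$ share a common hyperplane $(\eta_i(q))^\perp$, forcing $\lambda(x)=\lambda(q)$; since any two points of $K.v_0$ are connected by a chain of curvature spheres, $\lambda$ is constant, and then every $y\in\mathbb V$ lies in some $\nu_{\bar x}(K.v_0)$. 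Your final step is roughly right in spirit but also loose: you cannot assert that the transported holonomy system is already ``of the form $(\mathbb V',\mu R',K')$'' before knowing the group matches; the paper instead takes $g\in K'$, sets $T=h^{-1}gh\in\text{SO}(\mathbb V)$ (using that $h$ is a homothecy), checks via Remark~\ref{+simons} that $T$ satisfies the hypothesis of Lemma~\ref{T(R)}, concludes $T(R)=R$, and hence $T\in N_0(K)=K$.
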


\begin {proof} Observe that the groups $K$ and $K'$
act as irreducible $s$-representations.
We have that $K.v$ is a maximal dimensional orbit
 if and only if $K'.h(v)$ is so.

Recall that, for $s$-representations, an orbit is
maximal dimensional if and only if it is principal.

Let $K.v$ be a principal $K$-orbit. This orbit is an irreducible
(homogeneous) isoparametric submanifold of $\mathbb V$.
There is an irreducible  Coxeter
group  $C$,
associated to this isoparametric submanifold,
that acts on the normal space
$\nu _v (K.v)$ \cite {Te, PT, BCO}.
If $H_1, ... , H_r$ are the reflection hyperplanes
of the symmetries of $C$, then
$$\ \ \ \ \ \ \bigcup _{i=1}^{r} H_i =
\{z\in  \nu _v (K.v): K.z \text
{ is a singular orbit}\} \ \ \ \ \ \ \ \ \
 \text {(a)\ }\ \ \ \ \ \ $$
If $v' = h (v)$ one has the similar objects
$K'.v'$, $\nu _{v'}(K'v')$, $C'$ and $H'_1 , ... , H'_s$
and
$$\bigcup _{i=1}^{s} H'_i =
\{z'\in  \nu _{v'} (K'.v'): K'.z' \text
{ is a singular orbit}\} \ \ \ \   \text {(b)} \ $$
 Moreover, from (a) and (b), one has
that $h$ maps, bijectively, the family $H_1, ... ,H_r$
onto the family $H'_1, ... ,H'_s$. Then, $s=r$ and so we
may assume
that $h(H_i)= H'_i$, $i=1, ... , s$.

Then, from Corollary \ref {same hyperplanes}, one has
that $$h: \nu _w (K.w) \to \nu _{w'}(K'.w')$$
is a homothecy,
for any principal $K$-vector $w$, where $w'= h(w)$.
Denote by $\lambda (w) > 0 $ the homothecy constant of this map.

Observe, since $w\in \nu _w(K.w)$ and $w'\in \nu _{w'}(K'.w')$, that
$$\langle h(w'), h(w')\rangle ' = \lambda (w) \langle w, w\rangle$$
where $\langle \, , \, \rangle$ and $\langle \, , \, \rangle ' $
are the inner products on $\mathbb {V}$ and $\mathbb {V}'$, respectively.

Let  $v_0$ be a fixed $K$-principal vector and let
$M = K.v_0$.

Let $TM = E_1 \oplus ... \oplus E_r$, where $E_1, ... , E_r$ are
 the (autoparallel) eigendistributions of $TM$ associated to
the commuting family of shape operators $A_\xi$ of the isoparametric
submanifold $M\subset \mathbb V$. Associated to any $E_i$ there
is a parallel normal field $\eta _i$, a so-called curvature normal,
such that, for any normal field $\xi$,
 $$A_{\xi \vert E_i} =
\langle \xi , \eta _i\rangle \text {Id}_ {E_i}$$

Let, for $q\in M$,
$S_i(q)$ denote the integral manifold of $E_i$ by $q$.
Such integral manifold  is a so-called curvature sphere.
If  $x\in S_i(q)$ then
$$\nu _x (M) \cap \nu _q (M)= (\eta _ i(q))^\perp $$
where the orthogonal complement is inside $\nu _q (M)$.
Observe that this intersection is non-trivial, since
the codimension of $M$ in $\mathbb V$ is at least $2$.
This implies $\lambda (x) = \lambda (q)$. Since the
eigendistributions span $TM$,  one has that moving along
different curvature sphere one can reach, from $v_0$,
 any other point of $M$. Then $\lambda (x) = \lambda (v_0)$,
for all $x\in M$.

 Observe now that, for any $y\in \mathbb V$, there
exists $\bar {x}\in M$ such that $y\in \nu _{\bar {x}} (M)$.
In fact, such an $\bar {x}$ can
be chosen as a point where the function, from $M$ into
$\mathbb {R}$,
$x \to \langle x, y\rangle$ attains
a maximum.

Then $$\frac {\langle h(y), h(y) \rangle'}{\langle y, y \rangle} =
\frac {\langle h(\bar {x}),
h(\bar {x}) \rangle '}{\langle \bar {x}, \bar {x} \rangle}
= \lambda (x) = \lambda (v_0), $$
for all $0\neq y \in \mathbb {V}$. Then $h$ is a homothecy
of constant $\lambda : = \lambda (v_0)$. This proves the
first assertion.

Let $g\in K'$ and let $T= h^{-1} \circ g \circ h$. Since $h$ is
a homothecy, $T \in \text {SO}(\mathbb {V})$. Then, from the assumptions
and Remark \ref {+simons} one has that $T$ satisfies hypothesis of
Lemma \ref {T(R)}. Then, by this lemma, $T(R)=R$ . This implies, since
 the Lie algebra
of $K$ is generated by $\{R_{x,y}\}$,  that $T$ belongs to
$N(K)$,  the normalizer of $K$ in
$\text {O}(\mathbb {V})$. Moreover, $T$ must belong
 to the connected component
$N_0(K)$ (because of $T$ can be deformed
to the identity, since $K' $ is connected).
 But $N_0(K) = K$, since $K$ acts as an
$s$-representation (see \cite {BCO} Lemma 6.2.2).
Then $T\in K$, thus  $h^{-1}_*(K') = K$.

\end {proof}

\begin {nota}\label {falso}

The above proposition is not true if the holonomy systems are transitive.
In fact, let $(\mathbb V  , R , K)$ and $(\mathbb V '  , R' , K')$ be
the (symmetric)   holonomy systems associated to the  rank $1$
symmetric spaces
$S^{2n} = \text {SO}(2n +1)/\text {SO}(2n)$ and
$\mathbb {C}P^n = \text {SU}(n +1)/ S(U(1)\times \text {U}(n))$, respectively.
In this case  $\text {dim}(\mathbb {V}) = \text {dim}(\mathbb {V}')= 2n$.
Then any linear isomorphism  from  $\mathbb {V}$ into $\mathbb {V}'$, satisfies the
assumption of Proposition \ref {holonomy systems}, since the normal spaces of
non-trivial $K$ or  $K'$-orbits are lines.

\end {nota}

\

\section {non-transitive normal holonomy}

Let $M^n=H.v \subset S ^ {n -1 + \frac {1}{2}n(n+1)}$
be a homogeneous
 submanifold of the sphere.  Assume
 that the (restricted) normal holonomy group,
as a submanifold of the sphere,
acts irreducibly and it is not transitive
(on the unit normal sphere).

 From now on, we will regard $M^n$ as a submanifold
of the Euclidean space $\mathbb {R}^ {n + \frac {1}{2}n(n+1)}$ .
Let $\nu (M)$ be the normal bundle and let
 $\Phi (v)$ be the restricted
normal holonomy group at $v$ (regarding $M$ as a Euclidean submanifold).
Observe that  $\Phi (v)$ acts trivially on
$\mathbb R .v $ and that $\Phi (v)$, restricted to
$\bar {\nu}_v (M): =  \{ v \}^\perp \cap \nu _v(M)$,
is naturally identified with the (restricted)
normal holonomy
group of $M$ at $v$, as a submanifold of the sphere.

 Observe that the irreducibility
of the  normal holonomy group representation on
$\{ v \}^\perp \cap \nu _v(M)$
 implies that $\text {rank}(M) =1$. Namely, $v$ is the only vector of
$\nu _v (M)$ which is fixed by $\Phi (v)$.
 This implies that
 \it $M$ is a full and irreducible
 submanifold of the Euclidean space. \rm
In fact, if $M$ is not full then
  any non-zero constant normal vector is a
parallel normal field which is not a multiple of the position vector.
Then $\text {rank} (M)\geq 2$. A contradiction.
If $M$ is reducible it must be a product
of submanifolds contained in  spheres. Then $\text {rank} (M)\geq 2$.
Also a  contradiction.

One has, from  Remark \ref {correction},
that the first normal space $\nu ^1 (M)$ coincides
with the normal space $\nu M$, regarding $M$ as a Euclidean submanifold.
 This means,
 that the linear map, from $\nu _v (M)$ into
${Sim} (T_vM)$,  $\xi \mapsto
A_\xi $ is injective, where $A$ is the shape operator of $M$.
Since
 $\text {dim}(\nu _v (M)) =
\frac {1}{2}n(n+1) =\text {dim}({Sim} (T_vM))$, then
$A: \nu _v(M) \to  {Sim} (T_vM)$
{\it is a linear  isomorphism}.

Let $\mathcal {R}^\perp _{\xi _1 ,\xi _2}$ be the  adapted
normal curvature tensor
(see Section 1).  This tensor is given by
$$\langle \mathcal {R}^\perp _{\xi _1  ,\xi _2}\xi _3 , \xi _4\rangle
=- \text {trace}([A_{\xi _ 1}, A_{\xi _2 }]\circ [A_{\xi _ 3}, A_{\xi _4 }])
$$
$$=  \langle [A_{\xi _ 1}, A_{\xi _2 }],[A_{\xi _ 3}, A_{\xi _4 }]\rangle =
-\langle [[A_{\xi _ 1}, A_{\xi _2 }],A_{\xi _ 3}], A_{\xi _4 }\rangle$$
Observe that the right hand side of the above equality is, with
the usual identifications,
the Riemannian curvature tensor
$\langle \tilde {R}_{A_{\xi _ 1},A_{\xi _ 2}}
A_{\xi _ 3}, A_{\xi _ 4}\rangle$
of the symmetric space
$\text {Gl}(n) /\text {SO}(n)$.

 Observe that such a symmetric space is
isometric to the following product:
$$ \text {Gl}(n) /\text {SO} (n) =
\mathbb R \times \text {Sl}(n) /\text {SO} (n)$$
The tangent space of the second factor is canonically identified
with the traceless symmetric matrices $Sim _0 (n)$.

Let us consider the so-called {\it traceless shape operator} $\tilde A$
of $M$. Namely,

$$\tilde A_\xi : = A_\xi - \frac 1 n \text {trace}(A_\xi) \text {Id}
= A_\xi - \frac 1 n \langle \xi , H\rangle \text {Id} $$
where $H$ is the mean curvature vector.

Observe that
$$\langle \mathcal {R}^\perp _{\xi _1  ,\xi _2}\xi _3 , \xi _4\rangle =
 \langle [\tilde A_{\xi _ 1}, \tilde A_{\xi _2 }],
[\tilde A_{\xi _ 3},\tilde A_{\xi _4 }]\rangle \ \ \ \ $$
$$ \ \ \ \ \ =  \langle \tilde {R}_{A_{\xi _ 1},A_{\xi _ 2}}
A_{\xi _ 3}, A_{\xi _ 4}\rangle
 =
\langle  {R}_{\tilde A_{\xi _ 1},\tilde A_{\xi _ 2}}
\tilde A_{\xi _ 3},\tilde A_{\xi _ 4}\rangle
\ \ \ \ \ \ \text {(****)}$$
where $R$  is the curvature tensor  at $[e]$
of the  symmetric space $\text {Sl}(T_vM) /\text {SO} (T_vM)$
(see formula (***) of Section 1.3).

If $\bar \nu _v (M) =  \{ v \}^\perp \cap \nu _v (M)$, we
have the following two symmetric non-transitive irreducible
holonomy systems:
$[\bar {\nu}  _v , \mathcal R^\perp , \Phi (v)]
\text { and }
[Sim _0(T_vM), R, \text {SO}(T_vM)]$.

Recall that
for a  symmetric irreducible holonomy system
$[\mathbb V , \bar R, K]$, from Remark \ref {+simons},
the normal space to an orbit $K.v$ is given by
$\nu _v (K.v) = \{ \xi \in \mathbb {V}:
\bar R _{v,\xi} =0\}$

Then, from (****), we have that the map
$\tilde A$ is a liner isomorphism that maps
normal spaces to $\Phi (v)$-orbits into normal spaces to
$\text {SO}(T_vM)$-orbits. Then, by
Proposition \ref {holonomy systems}, $\tilde A$ is a homothecy and
  $\tilde A : \bar {\nu} _v (M) \to {Sim}_0(T_vM)$
transforms  $\Phi (v)$ into $\text {SO}(T_vM)$.
Then $\Phi (v)$ is isomorphic to $\text {SO}(T_vM)$.
Therefore, we have the following result:

\begin {lema} \label {SO(n)}
Let $M^n = K.v \subset S ^ {n -1 + \frac {1}{2}n(n+1)}$
be a
homogeneous submanifold. Assume that the restricted
normal holonomy group of $M$
acts irreducibly and it is non-transitive.
Then  the  representation of the normal holonomy group $\Phi (v)$  on
$\bar {\nu}_v (M)$ is (orthogonally)
equivalent to the isotropy representation of
the symmetric space $\text {\rm Sl}(n)/\text {\rm SO}(n)
\simeq
\text {\rm Sl}(T_vM)/\text {\rm SO}(T_vM)$. Moreover,
the traceless shape operator $\tilde A :
\bar {\nu} _v (M) \to Sim _0(T_vM)$ is a homothecy
that transforms, equivariantly,  $\Phi (v)$ into
$\text {\rm SO}(T_vM)$.
\rm (In  particular,
$\text {dim} (\Phi (v))= \frac {1}{2}n(n-1) =
\text {dim}(\text {\rm SO}(n))$).
\qed
\end {lema}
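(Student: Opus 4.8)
The statement is essentially the culmination of the two displayed holonomy systems introduced above, and the plan is to realize the traceless shape operator $\tilde A$ as an intertwining isomorphism to which Proposition~\ref{holonomy systems} applies. First I would record the structural consequences of the hypothesis: since $\Phi(v)$ acts irreducibly on $\bar\nu_v(M)$ one has $\mathrm{rank}(M)=1$, so $M$ is full and irreducible as a Euclidean submanifold, and by Remark~\ref{correction} the first normal space coincides with $\nu_v(M)$. As $\dim\nu_v(M)=\tfrac12 n(n+1)=\dim Sim(T_vM)$, the shape operator $A\colon\nu_v(M)\to Sim(T_vM)$ is a linear isomorphism. By Corollary~\ref{mainco} the mean curvature vector $H$ is a multiple of $v$, whence $\langle\xi,H\rangle=0$ for $\xi\in\bar\nu_v(M)$ and therefore $\tilde A_\xi=A_\xi$ there; counting dimensions ($\dim\bar\nu_v(M)=\tfrac12 n(n+1)-1=\dim Sim_0(T_vM)$) shows that $\tilde A$ restricts to a linear isomorphism $\tilde A\colon\bar\nu_v(M)\to Sim_0(T_vM)$.

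Next I would assemble the two holonomy systems $[\bar\nu_v(M),\mathcal R^\perp,\Phi(v)]$ and $[Sim_0(T_vM),R,\text{SO}(T_vM)]$, where $R$ is the curvature tensor (***) of $\text{Sl}(T_vM)/\text{SO}(T_vM)$. The first is irreducible and non-transitive by hypothesis; this already forces $n\ge3$ (a connected group cannot act irreducibly and non-transitively when $\dim\bar\nu_v(M)\le 2$, which happens exactly for $n=2$), so the second, being the holonomy system of the rank $n-1$ symmetric space $\text{Sl}(n)/\text{SO}(n)$, is irreducible and non-transitive as well. By the Simons holonomy theorem (Theorem~\ref{Simons}) both are symmetric, so Remark~\ref{+simons} gives, in each system, the description of the normal space to an orbit through $\eta$ as $\{\xi:\ \bar R_{\eta,\xi}=0\}$ in terms of the corresponding curvature tensor $\bar R$.

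The decisive step, and the one I expect to carry the real content, will be to verify that $\tilde A$ carries normal spaces of $\Phi(v)$-orbits onto normal spaces of $\text{SO}(T_vM)$-orbits, which is exactly the hypothesis of Proposition~\ref{holonomy systems}. For this I would use identity (****), namely
$$\langle\mathcal R^\perp_{\eta,\xi}\xi_3,\xi_4\rangle=\langle R_{\tilde A_\eta,\tilde A_\xi}\tilde A_{\xi_3},\tilde A_{\xi_4}\rangle,$$
valid for all $\eta,\xi,\xi_3,\xi_4\in\bar\nu_v(M)$. Since $\tilde A$ is onto $Sim_0(T_vM)$, the right-hand side vanishes for all $\xi_3,\xi_4$ precisely when $R_{\tilde A_\eta,\tilde A_\xi}=0$; hence $\mathcal R^\perp_{\eta,\xi}=0$ if and only if $R_{\tilde A_\eta,\tilde A_\xi}=0$. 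Combined with the description of normal spaces from Remark~\ref{+simons}, this says exactly that $\tilde A\big(\nu_\eta(\Phi(v).\eta)\big)=\nu_{\tilde A_\eta}\big(\text{SO}(T_vM).\tilde A_\eta\big)$ for every $\eta$.

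Finally I would invoke Proposition~\ref{holonomy systems}, which then yields that $\tilde A\colon\bar\nu_v(M)\to Sim_0(T_vM)$ is a homothecy with $\tilde A_*\Phi(v)=\text{SO}(T_vM)$; thus $\tilde A$ is an equivariant homothecy transforming $\Phi(v)$ isomorphically onto $\text{SO}(T_vM)$. Consequently the representation of $\Phi(v)$ on $\bar\nu_v(M)$ is orthogonally equivalent to the conjugation action of $\text{SO}(n)$ on $Sim_0(n)$, i.e. to the isotropy representation of $\text{Sl}(n)/\text{SO}(n)$, and $\dim\Phi(v)=\dim\text{SO}(n)=\tfrac12 n(n-1)$. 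The one genuinely delicate point is the verification in the previous paragraph that (****) promotes the pointwise curvature identity into an identity of normal spaces, since it is this that allows Proposition~\ref{holonomy systems} to be applied; everything else is routine dimension bookkeeping together with the cited results.
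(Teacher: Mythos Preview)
Your overall strategy matches the paper's proof exactly: set up the two holonomy systems $[\bar\nu_v(M),\mathcal R^\perp,\Phi(v)]$ and $[Sim_0(T_vM),R,\text{SO}(T_vM)]$, use formula (****) together with Remark~\ref{+simons} to see that $\tilde A$ carries normal spaces of $\Phi(v)$-orbits to normal spaces of $\text{SO}(T_vM)$-orbits, and then invoke Proposition~\ref{holonomy systems}. Your observation that the hypothesis forces $n\ge 3$ (so that the second holonomy system is also non-transitive) is a nice point that the paper leaves implicit.

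There is, however, one genuine slip. You invoke Corollary~\ref{mainco} to conclude that $H$ is a multiple of $v$ and hence $\tilde A_\xi=A_\xi$ on $\bar\nu_v(M)$. But Corollary~\ref{mainco} is stated for orbits of $s$-representations, whereas in the lemma $M=K.v$ is only assumed to be a homogeneous submanifold of the sphere; you cannot yet appeal to minimality. Fortunately this is not needed: one can check directly that $\tilde A\colon\bar\nu_v(M)\to Sim_0(T_vM)$ is a linear isomorphism. Indeed, if $\xi\in\bar\nu_v(M)$ and $\tilde A_\xi=0$, then $A_\xi=c\,\mathrm{Id}$ for some scalar $c$; since $A_v=-\mathrm{Id}$ (the position vector) and $A$ is injective on $\nu_v(M)$, this forces $\xi=-cv$, and $\xi\perp v$ gives $\xi=0$. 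Dimension count then yields bijectivity. With this correction the rest of your argument goes through and coincides with the paper's.
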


\begin {prop}\label {constant multiplicities}
Let $M^n = K.v \subset S ^ {n -1 + \frac {1}{2}n(n+1)}$
be a
homogeneous submanifold. Assume that the restricted
normal holonomy group of $M$
acts irreducibly and it is non-transitive.
Then, for any $\xi (t)$ parallel normal section
along a curve, the traceless shape operator $\tilde {A}_{\xi (t)}$
has constant eigenvalues.

\end {prop}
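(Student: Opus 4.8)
The plan is to view the (unordered) eigenvalues of $\tilde A_\eta$ as a function on the normal bundle and to show that it is preserved by enough symmetries to be forced constant along $\nabla^\perp$-parallel sections. The first step treats loops. By Lemma~\ref{SO(n)} the traceless shape operator $\tilde A\colon \bar\nu_v(M)\to Sim_0(T_vM)$ is a homothecy that carries $\Phi(v)$ equivariantly onto $\SO(T_vM)$, acting by conjugation on $Sim_0(T_vM)$. Conjugation by an orthogonal transformation does not change the characteristic polynomial of a symmetric endomorphism, so the eigenvalues of $\tilde A_\eta$ are invariant under $\Phi(v)$. Consequently, if $\xi(t)$ is $\nabla^\perp$-parallel along a loop based at $v$, then $\xi(1)=\phi\,\xi(0)$ for some $\phi\in\Phi(v)$, and $\tilde A_{\xi(1)}$ and $\tilde A_{\xi(0)}$ have the same eigenvalues. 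Since $M$ is homogeneous, the analogous statement holds at every point $q\in M$, with $\Phi(q)$ mapped onto $\SO(T_qM)$.

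Next I would record that the ambient isometries preserve these eigenvalues. Each $g\in K$ is a linear isometry of $\mathbb{R}^N$ with $g(M)=M$, so it intertwines the shape operators, $A_{g\eta}=g\,A_\eta\,g^{-1}$; being orthogonal, $g$ commutes with the extraction of the trace part, whence $\tilde A_{g\eta}=g\,\tilde A_\eta\,g^{-1}$, and $\tilde A_{g\eta}$, $\tilde A_\eta$ again share eigenvalues. The key homogeneity input is that $K$ lies inside the group of $\nabla^\perp$-transvections: each one-parameter subgroup $\exp(tX)$ realizes $\nabla^\perp$-parallel transport along the orbit curves $t\mapsto\exp(tX).p$ (this is the normal part of the canonical connection $\bar\nabla^c=\nabla^c\oplus\nabla^\perp$, for which the second fundamental form is parallel, as used earlier), and concatenating such curves shows that every $g$ in the identity component of $K$ satisfies $g|_{\nu_v}=\tau^\perp_{c_g}$ for a suitable curve $c_g$ from $v$ to $g.v$, where $\tau^\perp$ denotes $\nabla^\perp$-parallel transport.

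Finally I would combine the two steps. Given an arbitrary curve $c$ from $v$ to $q$ and a parallel field $\xi(t)$, I choose by transitivity some $g$ in the identity component of $K$ with $g.v=q$; then $g^{-1}\circ\tau^\perp_c=(\tau^\perp_{c_g})^{-1}\circ\tau^\perp_c=\tau^\perp_{\ell}\in\Phi(v)$ is the normal holonomy of the loop $\ell$ at $v$ obtained by following $c$ and then the reverse of $c_g$. Therefore the eigenvalues of $\tilde A_{\xi(1)}$ at $q$ equal those of $\tilde A_{g^{-1}\xi(1)}=\tilde A_{(g^{-1}\tau^\perp_c)\xi(0)}$ at $v$ by the second step, and these equal those of $\tilde A_{\xi(0)}$ by the holonomy invariance of the first step. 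Hence $\tilde A_{\xi(t)}$ has constant eigenvalues along $c$. I expect the only genuinely delicate point to be the transvection claim, namely verifying that the $K$-action on the normal bundle implements $\nabla^\perp$-parallel transport up to holonomy; once that is in place the conclusion is immediate from Lemma~\ref{SO(n)} and the orthogonal invariance of eigenvalues.
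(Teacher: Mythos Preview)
Your overall strategy is the same as the paper's: eigenvalues of $\tilde A$ are invariant under (i) the restricted normal holonomy $\Phi(v)$ (by Lemma~\ref{SO(n)}) and (ii) the linear action of $K$ (since $\tilde A_{g\eta}=g\tilde A_\eta g^{-1}$), and one concludes by writing $\nabla^\perp$-parallel transport as ``group action composed with holonomy''. Where your argument has a genuine gap is precisely the transvection step, and your justification for it is incorrect.

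You assert that ``each one-parameter subgroup $\exp(tX)$ realizes $\nabla^\perp$-parallel transport along the orbit curve'', and justify this by saying that $\nabla^\perp$ is the normal part of $\bar\nabla^c=\nabla^c\oplus\nabla^\perp$. This is a misreading: the connection $\bar\nabla^c$ is \emph{defined} that way so that $\alpha$ becomes $\bar\nabla^c$-parallel; it is \emph{not} the connection for which the one-parameter subgroups give parallel transport. On the normal bundle the group action gives parallel transport for the canonical (homogeneous-bundle) connection, which in general differs from $\nabla^\perp$. So the bare claim $g_{|\nu_v}=\tau^\perp_{c_g}$ is false.

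What is true, and what the paper actually uses, is the identity (see \cite{BCO}, Remarks~6.2.8 and 6.2.9)
\[
\tau^\perp_t \;=\; (\mathrm{d}l_{\exp(tX)})_{|\nu_p(M)}\circ e^{-t\,\mathcal{A}_X},
\qquad \mathcal{A}_X\in\mathrm{Lie}(\Phi(p)),
\]
so that the group action equals $\nabla^\perp$-parallel transport \emph{up to an element of the restricted normal holonomy group}. This is exactly the ``up to holonomy'' caveat you add only at the end. With this correction your final computation goes through: for a short curve $\beta$ from $p$ to $\exp(X).p$ one gets $\tau^\perp_\beta=(\mathrm{d}l_{\exp(X)})_{|\nu_p}\circ\bar\phi$ with $\bar\phi\in\Phi(p)$, hence $\tilde A_{\tau^\perp_\beta(\xi)}$ is conjugate to $\tilde A_{\bar\phi(\xi)}$, which by Lemma~\ref{SO(n)} is conjugate to $\tilde A_\xi$. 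Note also that the paper carries this out \emph{locally} (on a contractible $U$) so that only the restricted holonomy $\Phi(p)$ enters, and then globalizes by a connectedness argument; your global formulation would need the same care to ensure the correction factor lands in $\Phi(v)$ rather than the full holonomy group.
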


\begin {proof}Note that
$M$ must be full and irreducible as a Euclidean submanifold (see the
beginning of this section).
 Let $p\in M$ be arbitrary and let
 $K_p$ be the isotropy subgroup of $K$ at $p$.
Let us decompose
$$\text {Lie}(K) = \mathfrak {m} \oplus \text {Lie}(K_p)$$
where
$\mathfrak {m}$ is a complementary subspace of $\text {Lie}(K_p)$
Let $B_r(0)$ be an open ball, centered at the origin,
of radius $r$ of $\mathfrak {m} $ such that
$\text {Exp} : B_r(0) \to M$ is a diffeomorphism onto its image
$U = \text {Exp} (B_r(0))$, which is a neighbourhood of $p$
(the inner product on $\text {Lie}(K)$
is irrelevant).

Let  $\beta :[0,1] \to U$ be an arbitrary  piece-wise differentiable
 curve with $\beta (0)= p$. Since $\beta (1) \in U$, there exits
$X\in \mathfrak {m}$ such that
$\beta (1)  = \text {Exp}(X).p$. Let
$\gamma : [0,1] \to M$ be defined by  $\gamma (t) = \text {Exp}(tX).p$.
Let us denote, for $k\in K$,  by  $l_k$ the linear isometry $v \mapsto
k.v$ of $\mathbb {V}$.
Let  $\tau ^\perp _t$ denote the $\nabla ^\perp$-parallel
transport
along $\gamma  _{\vert [0,t]}$.
Then, from remarks 6.2.8 and 6.2.9  of \cite
{BCO},

$$\ \ \ \ \ \ \ \ \ \tau ^\perp _t = (\text {d}l_{\text {Exp}(tX)})_{\vert
\nu _p(M)}\circ \text {e}^{-t\mathcal {A}_X} \ \ \ \
\ \ \ \ \ \ \ \ \ \ \ \ \  \text {(A)}$$
where $\mathcal {A}_X$ belongs to the normal holonomy algebra
$\text {Lie}(\Phi (p))$ and it is defined by
$$\mathcal {A}_X  = \frac {\,\text {d}}{\text {d}t}_{\vert t=0}
\tau ^\perp _{-t}\circ (\text {d}l_{\text {Exp}(tX)})_{\vert
\nu _p(M)}$$

Let $\tau ^\perp _\beta$ be the $\nabla ^\perp$-parallel transport along
$\beta$ and
 $\phi = \tau ^\perp _{-1} \circ \tau ^\perp _\beta$.
Then $\phi$ belongs to $\Phi (p)$, the
restricted normal holonomy group at $p$. In fact, $\phi$ coincides with
 the $\nabla ^\perp$-parallel transport along the null-homotopic, since it is
contained in $U$,  loop
$\beta * \tilde {\gamma}$, obtained from gluing  the curve $\beta$ together with
the curve $\tilde {\gamma}$, where $\tilde {\gamma}(t) = \gamma (1-t)$.

 We have that
$\tau ^\perp _\beta = \tau _{1} \circ \phi $ and so, by (A),
$$\tau ^\perp _\beta =
((\text {d}l_{\text {Exp}(X)})_{\vert \nu _p(M)} \circ
\text {e}^{-\mathcal {A}_X}) \circ  \phi =
\text ({d}l_{\text {Exp}(X)})_ {\vert \nu _p(M)} \circ \bar {\phi}$$
where $\bar {\phi} = \text {e}^{-\mathcal {A}_X}\circ \phi$\,  belongs to
$\Phi (p)$. Then, for any
$\xi \in \nu _p (M)$,

$$\tilde A _{\tau ^\perp _\beta (\xi)} =
\tilde {A} _{\text {d}l_{\text {Exp}(X)} (\bar \phi (\xi))}
= \text {d}l_{\text {Exp}(X)}\circ \tilde {A}_{\bar \phi (\xi)}\circ
(\text {d}l_{\text {Exp}(X)})^{-1}$$
$$ = \text {Exp}(X).\tilde {A}_{\bar \phi (\xi)}
.(\text {Exp}(X))^{-1}
\ \ \ \ \ \ \ \ \ \ \ \ \ \ \ \ \ $$
Then, from the paragraph just before Lemma \ref {SO(n)},
we have that
there exists $g\in \text {SO}(T_p  (M))$ such that
$\tilde {A}_{\bar \phi (\xi)} = g.\tilde {A}_{\bar \phi (\xi)}.g^{-1}$.
Then
$$\tilde A _{\tau ^\perp _\beta (\xi)}
= (\text {Exp}(X).g).\tilde {A}_{\xi}.
(\text {Exp}(X).g)^{-1}$$
This shows that the eigenvalues of $\tilde A _{\tau ^\perp _\beta (\xi)}$
are the same as the eigenvalues of $\tilde  A_{\xi}$.

The curve $\beta$ was assumed to be contained in $U$. Since $p$ is arbitrary,
one obtains that the eigenvalue of $\tilde {A}_{\xi (t)}$ are locally constant
for any $\xi (t)$ parallel normal field along a curve $c(t)$.
This implies that the eigenvalues of $\tilde {A}_{\xi (t)}$ are constant.

\end {proof}

The following lemma is well known and the proof is similar to the case of
hypersurfaces of a space form.

\begin {lema} \label {Dupin Condition}  {\rm (Dupin Condition)}. Let $M$ be a
submanifold of a space of
constant curvature and let $\xi$ be a parallel normal field such that
the eigenvalues  of the shape operator $A_\xi$ have constant multiplicities.
Let
$\lambda : M \to \mathbb R$ be an eigenvalue function of $A_{\xi}$
such that its associated
(and integrable from Codazzi identity)
eigendistribution $E$ has dimension at least $2$. Then $\lambda $
is constant along any integral manifold of $E$ (or equivalently,
$ \text {d} \lambda (E) =  0$).

\end {lema}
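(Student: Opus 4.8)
The plan is to derive everything from the Codazzi equation of a submanifold of a space form, exploiting the self-adjointness of $A_\xi$ together with the hypothesis $\dim E \geq 2$. Since $\xi$ is $\nabla^\perp$-parallel, the Codazzi equation reduces to the statement that $(\nabla_X A_\xi)Y := \nabla_X(A_\xi Y) - A_\xi(\nabla_X Y)$ is symmetric in $X$ and $Y$, that is, $(\nabla_X A_\xi)Y = (\nabla_Y A_\xi)X$ for all tangent fields $X,Y$. The constant-multiplicity assumption is what guarantees that $\lambda$ is a smooth function and that the eigendistribution $E = \ker(A_\xi - \lambda\,\mathrm{Id})$ is a smooth (integrable) distribution, so that the computation below makes sense with local sections.

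Next I would fix a point $p$ and an arbitrary $Z \in E_p$; the goal is to show $Z\lambda = 0$. If $Z = 0$ this is trivial, so assume $Z \neq 0$. Because $\dim E \geq 2$, I can choose $X \in E_p$ with $X \neq 0$ and $X \perp Z$, and extend $X,Z$ to local sections of $E$. Applying the Codazzi symmetry to this pair and expanding with $A_\xi X = \lambda X$ and $A_\xi Z = \lambda Z$ gives
\[
(Z\lambda)X + \lambda \nabla_Z X - A_\xi \nabla_Z X = (X\lambda)Z + \lambda \nabla_X Z - A_\xi \nabla_X Z .
\]
Then I would take the inner product of this identity with $X$. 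Using that $A_\xi$ is self-adjoint, each term of the form $\langle A_\xi \nabla_\bullet\, \bullet, X\rangle$ equals $\lambda\langle \nabla_\bullet\, \bullet, X\rangle$, so the two ``$\lambda\nabla - A_\xi\nabla$'' contributions cancel on both sides. What survives is
\[
(Z\lambda)\,|X|^2 = (X\lambda)\,\langle Z, X\rangle .
\]
Since $X \perp Z$ the right-hand side vanishes, and since $X \neq 0$ I conclude $Z\lambda = 0$. As $Z \in E_p$ was arbitrary, this is exactly $\mathrm{d}\lambda(E) = 0$, i.e. $\lambda$ is constant along every integral manifold of $E$.

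There is no genuine analytic obstacle here; the argument is a short computation. The two features that do the work, and which should be emphasized, are (i) the space-form Codazzi identity, which supplies the symmetry $(\nabla_X A_\xi)Y = (\nabla_Y A_\xi)X$ and would fail for a general ambient space, and (ii) the hypothesis $\dim E \geq 2$, which is precisely what allows me to pick the auxiliary vector $X \in E$ orthogonal to $Z$ and thereby annihilate the term $(X\lambda)\langle Z,X\rangle$. For a one-dimensional eigendistribution this device is unavailable, which is exactly why the conclusion can fail for a simple principal curvature and why the dimension hypothesis is sharp.
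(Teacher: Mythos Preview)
Your proof is correct and is exactly the standard argument one would expect here. The paper itself does not supply a proof of this lemma: it merely states that the result ``is well known and the proof is similar to the case of hypersurfaces of a space form,'' so there is nothing to compare against beyond noting that your Codazzi computation is precisely the classical one alluded to.
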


\begin {teo} \label {n at least 3}Let $M^n  \subset S ^ {n -1 + \frac 1 2 n(n+1)}$
be a
homogeneous submanifold, where $n> 3$.
Assume that the restricted
normal holonomy group
acts irreducibly and not transitively.
Then $M$ is a Veronese submanifold.
\end {teo}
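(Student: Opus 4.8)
The plan is to follow the strategy outlined in the introduction, reducing the problem to the isoparametric rank rigidity theorem by producing an isoparametric parallel normal field on an appropriate singular holonomy tube. First I would record the structural facts that are already available: by Lemma \ref{SO(n)} and Proposition \ref{constant multiplicities}, the traceless shape operator $\tilde A$ maps $\bar\nu_v(M)$ onto $Sim_0(T_vM)$ as a homothecy intertwining $\Phi(v)$ with $\mathrm{SO}(T_vM)$, and the eigenvalues of $\tilde A_{\xi(t)}$ are constant along any $\nabla^\perp$-parallel normal field. Since eigenvalue functions of $\tilde A$ are constant along parallel transport, one can choose a unit normal direction $\xi$ at one point so that $A_\xi$ (equivalently $\tilde A_\xi$) has exactly two eigenvalues, the smaller of multiplicity $2$, and propagate this configuration by parallel transport; this singles out a singular holonomy tube $M_\xi$ obtained by pushing $M$ out along $\xi$ via the normal exponential map. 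I would then pass to $M_\xi$ and let $\hat\xi$ be the parallel focal normal field with $(M_\xi)_{-\hat\xi}=M$.

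Next I would analyze the shape operator $\hat A_{\hat\xi}$ of $M_\xi$ using the tube formulas. The key output is that $\hat A_{\hat\xi}$ has three eigenvalue functions $\hat\lambda_1,\hat\lambda_2,\hat\lambda_3=-1$ of \emph{constant multiplicities} $2,\ n-2,\ $ and the vertical fibre dimension respectively, where $\hat\lambda_3=-1$ is the constant vertical eigenvalue and $E_1,E_2$ are the two horizontal eigendistributions. From the homothety property of $\tilde A$ together with the tube formulas, $\hat\lambda_1$ and $\hat\lambda_2$ are functionally related, so constancy of one along a curve forces constancy of the other. Because $\dim E_1=2\ge 2$, the Dupin Condition (Lemma \ref{Dupin Condition}) applies to $\hat\lambda_1$, giving $\mathrm{d}\hat\lambda_1(E_1)=0$, hence also $\mathrm{d}\hat\lambda_2(E_1)=0$. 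The crucial point that uses $n>3$ is that $\dim E_2=n-2\ge 2$ as well, so the Dupin Condition applies a \emph{second} time to $\hat\lambda_2$ along $E_2$, yielding $\mathrm{d}\hat\lambda_2(E_2)=0$ and therefore $\mathrm{d}\hat\lambda_1(E_2)=0$ by the functional relation. Thus both horizontal eigenvalues are constant along all horizontal curves.

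The main obstacle, and the step that genuinely requires $n>3$, is exactly this double application of the Dupin Condition to $E_2$: when $n=3$ one has $\dim E_2=1$ and the Dupin argument fails, which is why the introduction flags $n=3$ as needing separate topological methods. Granting $n>3$, once both horizontal eigenvalues are constant along horizontal curves, I would invoke the fact that any two points of a holonomy tube can be joined by a horizontal curve to conclude that $\hat A_{\hat\xi}$ has globally constant eigenvalues; since $\hat\xi$ is non-umbilical (the eigenvalues are distinct), it is a non-umbilical isoparametric parallel normal field. Then the isoparametric rank rigidity theorem (Theorem \ref{global isoparametric}, or its local version Theorem \ref{local isoparametric}) forces $M_\xi$, and hence $M$, to have constant principal curvatures; as $M$ is full and irreducible, it is an orbit of an $s$-representation. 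Finally, since $\Phi(v)$ acts irreducibly and not transitively with the normal holonomy equivalent to the isotropy representation of $\mathrm{Sl}(n)/\mathrm{SO}(n)$, Proposition \ref{n-Veronese} identifies $M$ as a Veronese submanifold, completing the argument.
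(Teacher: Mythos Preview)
Your proposal is correct and follows essentially the same route as the paper: choose $\xi$ so that $A_\xi$ has two eigenvalues of multiplicities $2$ and $n-2$, pass to the holonomy tube, use the tube formulas to get the functional relation between $\hat\lambda_1$ and $\hat\lambda_2$, apply the Dupin condition to both horizontal eigendistributions (which is where $n>3$ enters), use horizontal connectivity of the holonomy tube to make $\hat\xi$ isoparametric, and then invoke the isoparametric rank rigidity theorem together with Proposition~\ref{n-Veronese}. The only point the paper makes more explicit is the completeness of $M_\xi$ (needed to apply Theorem~\ref{global isoparametric}), handled in Remark~\ref{complete}, and the intermediate use of Thorbergsson's theorem to pass from constant principal curvatures to an $s$-representation orbit; both are implicit in your final step.
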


\begin {proof}

Note that
$M$ must be full and irreducible as a Euclidean submanifold (see the
beginning of this section).

We will regard $M$ as a submanifold of the Euclidean space
$\mathbb {R}^ {n + \frac {1}{2}n(n+1)}$. Then, as we have observed at the
beginning of this section, $A : \nu _p(M) \to  {Sim} (T_pM)$
is an isomorphism ($p\in M$ is arbitrary). Now choose
$\xi \in \nu _p(M)$ such that $A_\xi$ has exactly two eigenvalues
$\lambda _1 (p)$, $\lambda _2 (p)$ with multiplicities $m_1, m_2 \geq 2$
(this is not possible if $n\leq 3$). In particular, we
assume that $m_1 = 2$ and $m_2= n-2$.
We may assume that $\xi$ is small enough such that the holonomy tube
\cite {BCO}
$M_\xi$
is an immersed Euclidean submanifold (see Remark \ref {complete}).
We may also assume that $\xi$ is perpendicular to the
position (normal) vector $p$, since $A_p = -Id$.

There is a natural projection $\pi : M_{\xi} \to M$,
$\pi (c(1) + \bar {\xi}(1)) = c(1)$. Moreover,
$\hat {\xi}$ defines a parallel normal field to $M_{\xi}$,
where $\hat {\xi}(q) = q- \pi (q)$. In this way $M$ is a parallel
focal manifold to $M_\xi$. Namely, $M = (M_{\xi})_{-\hat {\xi}}$.
Observe that the holonomy tube $M_{\xi}$ is not a maximal one and
so it has not a flat normal bundle (this would have been the case, in our
situation,  where
 all of the eigenvalues of $A_{\xi}$ have multiplicity one).
Let $\bar {\xi}(t)$ be a parallel normal field along an arbitrary
curve  $c(t)$ with
$c(0)= p$, \, $\bar {\xi}(0)= \xi$. Then, from Proposition
\ref {constant multiplicities}, the eigenvalues of
the traceless shape operator $\tilde {A}_{\bar {\xi}(t)}$ are
constant and hence the same as the  eigenvalues of
$\tilde {A}_{\xi}$ which are
$\tilde {\lambda}_1 = \lambda _1 (p)
-\frac {1}{n}(2\lambda _1(p) + (n-2)\lambda_2 (p))
$, with multiplicity $2$ and
$\tilde {\lambda}_2
= \lambda _2 (p)-\frac {1}{n}(2\lambda _1 (p) + (n-2)\lambda_2 (p))$,
with multiplicity $n-2$.

Let $H$ be the mean curvature vector field on $M$.
Then the eigenvalues of the shape operator $A_{\bar {\xi}(1)}$
can be written as
 $$ \lambda _ i(c(1)) = \tilde {\lambda} _i + \frac {1}{n}
\langle  \bar {\xi}(1) , H(c(1))\rangle  \ \ \ \ \ i=1,2$$
with multiplicities
$2$ for and $n-2$, respectively (independent of $c(1)\in M$).

From the tube formula \cite {BCO}, one has that the eigenvalues functions
$\hat {\lambda}_1$ and $\hat {\lambda}_2$
of the shape operator $\hat {A}_{\hat {\xi}}$ of the holonomy tube,
restricted to the horizontal
subspace
$\mathcal {H}_q$ of the holonomy tube
$M_\xi$, at a point $q = c(1) + \bar {\xi}(1)$ are:

$$\hat {\lambda}_1 (q) =
\frac {\tilde {\lambda} _1 + \frac {1}{n}\langle  \bar {\xi}(1) , H(c(1))\rangle}
{1-\tilde {\lambda} _1 -
\frac {1}{n}\langle  \bar {\xi}(1) , H(c(1))\rangle}$$
and
$$\hat {\lambda}_2 (q) =
\frac {\tilde {\lambda} _2 + \frac {1}{n}\langle  \bar {\xi}(1) , H(c(1))\rangle}
{1-\tilde {\lambda} _2 -
\frac {1}{n}\langle  \bar {\xi}(1) , H(c(1))\rangle}$$

or, equivalently,

$$\hat {\lambda}_1(q)= \frac {\tilde {\lambda} _1 + \frac {1}{n}
\langle  \hat  {\xi}(q) , H(\pi (q))\rangle}
{1-\tilde {\lambda}_1 - \frac {1}{n}
\langle  \hat {\xi}(q) , H(\pi (q))\rangle}$$
and
$$\hat {\lambda}_2(q)=\frac {\tilde {\lambda} _2 + \frac {1}{n}
\langle  \hat  {\xi}(q) , H(\pi (q))\rangle}
{1-\tilde {\lambda} _2 - \frac {1}{n}
\langle  \hat {\xi}(q) , H(\pi (q))\rangle}$$
with (constant) multiplicities $2$ and $n-2$, respectively.
Observe that $\hat {A}_{\hat {\xi}(q)}$, restricted to the
vertical distribution (tangent to the orbits in $M_\xi$
of the normal holonomy group of $M$ at projected points)
is minus the identity. So, $\hat {A}_{\hat {\xi}(q)}$ has a third eigenvalue
$\hat {\lambda}_3 (q) = -1$ with constant multiplicity $m_3
= \text {dim}(M_\xi) - \text {dim}(M)$.

The real injective function  $s\overset{f} {\mapsto}
\frac {s}{1+s}$ transforms
$\hat {\lambda}_i(q)$ into
$\tilde {\lambda} _i + \frac {1}{n}
\langle  \hat {\xi}(q) , H(\pi (q))\rangle$
($i=1,2$). Then,  $$\hat {\lambda}_1(q) = \hat {\lambda}_1(q')
\iff
 \hat {\lambda}_2(q) = \hat {\lambda}_2(q')\ \ \ \ \ \ \text {(I)}$$
In fact, any of both equalities implies
 $\frac {1}{n}
\langle  \hat {\xi}(q) , H(\pi (q))\rangle = \frac {1}{n}
\langle  \hat {\xi}(q') , H(\pi (q'))\rangle$. This, by the above
equalities, implies (I).

Let now $E_1$ and $E_2$ be the (horizontal) eigendistributions
associated to eigenvalue functions $\hat {\lambda}_1$ and
$\hat {\lambda}_2$ of the shape operator $\hat {A}_{\hat {\xi}}$.
Observe that $\text {dim}(E_1)=2$ and $\text {dim}(E_2)= n-2\geq 2$.

\

\it Up to here everything is valid, except the last inequality,
 also for
$n=3$. \rm \ \ \ \ \ (II)

\noindent (This will be used in next section where we deal
with the case $n=3$).

\

If $\gamma (t)$ is a curve that lies in $E_1$ then, from the Dupin Condition
(see Lemma \ref {Dupin Condition}) we have that $\hat {\lambda} _1$ is constant
along $\gamma$. So, by (I), $\hat {\lambda} _2$ is also constant along
$\gamma$.
The same is true if $\gamma$ lies in $E_2$. This implies that
$0 = v(\hat {\lambda}_1) = v(\hat {\lambda}_2) = v(\hat {\lambda}_3)$
for any vector $v$ that lies in $\mathcal {H}$.
Then the eigenvalues of the shape  operator $\hat {A}_{\hat {\xi}}$
are constant
along any horizontal curve. Since  any two points, in a holonomy tube,
can
be joined by a horizontal curve we conclude
that the (three) eigenvalues of
$\hat {A}_{\hat {\xi}}$ are constant on $M_\xi$.

Then $\hat {\xi}$ is a parallel isoparametric (non-umbilical) normal section.
Observe that  $M_{\xi}$ is a full irreducible Euclidean submanifold, since
$M$ is so. Moreover, $M_{\xi}$ is complete with
the induced metric (see Remark \ref {complete}).
Then, by \cite {BCO},\cite {DO}, $M_\xi$ must be a submanifold
with
constant principal curvatures. Since $M=(M_\xi)_{-\hat {\xi}}$, we have
that $M$ is also a submanifold with constant principal curvatures.
Any principal holonomy tube of $M$ has codimension at
least $3$ in the Euclidean space, since
 the normal holonomy of $M$, as a submanifold of the sphere,  is non-transitive.
Then,
by the theorem of Thorbergsson \cite {Th,O2, BCO}, $M$ is an
orbit
of an (irreducible) $s$-representation.

The fact that
 $M$ is a Veronese submanifold follows from Proposition \ref {n-Veronese}.

\end {proof}

\,

\begin {nota}\label {complete}

Let $M^n=H.v$ be a full  irreducible
 homogeneous submanifold  of $\mathbb {R}^N$ which
is (properly) contained in the sphere $S^{N-1}$.
We are not assuming that $M$ is compact
(in which case the assertions of this remark are trivial).

By making use of the homogeneity of $M$ one obtains that
there exists $\varepsilon >0$ such that:  if $\xi \in \nu (M)$ with
$0< \Vert \xi \Vert < \varepsilon$ then any  of the  eigenvalues $\lambda$ of
the shape operator $A_{\xi}$ satisfies $\vert \lambda \vert < 1-a$, for some
$0<a<1$.

Let us assume that $\text {rank} (M) =1$, i.e.,
$M$ is not a submanifold of higher rank
(otherwise, $M$ would be
 an orbit of an $s$-representation and hence compact).

Let $ \xi \in \nu _v (M)$ with $0< \Vert \xi \Vert < \varepsilon$
and let us consider the normal holonomy subbundle by $\xi$
\cite {BCO} of the normal bundle $\pi :\nu (M) \to M$.
$$\text {Hol}_{\xi} (M) = \{\eta \in \nu (M): \eta \overset {\mathcal {H}}{\sim}
\xi\}$$
where $\mathcal {H}$ is the horizontal
distribution of $\nu (M)$ and
$\eta  \overset {\mathcal {H}}{\sim} \xi$ if $\eta $ and $\xi$
can be joined by
a horizontal curve. Equivalently,
$\eta  \overset {\mathcal {H}}{\sim} \xi$ y
$\eta $ is the $\nabla ^{\perp}$-parallel transport of $\xi$ along
some curve.

One has that the fibres of $\pi : \text {Hol}_{\xi} (M)\to M$ are
compact. In fact,
$\pi ^{-1} (\{\pi (\eta)\}) = \Phi (\pi (\eta)).\eta $,
where $\Phi$ denotes the normal holonomy group. Observe that such
 a group is compact, since its connected component acts as an
 $s$-representation (see the discussion inside the proof of
Theorem \ref {3-Veronese}, Case (2), (c)).

Let us consider the normal exponential map
$\text {exp}^{\nu}: \nu (M) \to \mathbb {R}^N$, given by
$\text {exp}^{\nu}(\eta) = \pi (\eta) + \eta$. Let $\eta \in \nu _p(M)$
and identify, as usual, via $\text {d}\, \pi$,
$T_pM \simeq \mathcal {H}_\eta$. The vertical distribution
$\nu _{\eta} =
T_{\eta}\nu _p (M)$ is canonically identified to $\nu _p (M)$. With this
identification one has the well-known expression for the differential of
the normal exponential map:
$$\text{d}  (\text {exp}^{\nu})\, _{\vert \mathcal {H}_\eta}= (I-A_{\eta})\, ,
\ \ \ \ \ \
\text{d}  (\text {exp}^{\nu})\, _{\vert \nu_\eta} = Id_{\nu _p (M)}
\ \ \ \ \ \text {(C)}$$

Then $\text {exp}^{\nu} : \text {Hol}_{\xi} (M) \to \mathbb {R}^N$
is an immersion.
The image  of this map is the so-called holonomy tube $M_{\xi}$ of
$M$ by $\xi$. It is given by
$$M_{\xi} =
  \{ c(1) + \bar {\xi} (1):\bar {\xi}(t) \text { is }
\nabla^\perp \text {-parallel
 along } c(t) \text { where } c(0) = p, \, \bar {\xi}(0) =
\xi   \}   $$

 Many times, and in particular in the proof of Theorem \ref {n at least 3},
for the sake
of simplifying the notation, the immersed submanifold
$\text {exp}^{\nu} : \text {Hol}_{\xi} (M) \to \mathbb {R}^N$
will be also denoted by $M_{\xi}$.

One has that the Euclidean submanifold
 $\text {exp}^{\nu} : \text {Hol}_{\xi} (M) \to \mathbb {R}^N$, with the
induced metric $\langle \, , \, \rangle$, is a complete Riemannian manifold.
In fact, let $(\, , \, )$ be the
Sasaki metric on
$\text {Hol}_{\xi} (M)$. In such a metric the horizontal distribution
is perpendicular to the vertical one. Moreover, $\pi$ is a Riemannian submersion
and the metric in the vertical space
$\Phi (p).\eta$ is that induced from the metric on the  normal
space $\nu _p (M)$. Since $M$ is complete and the fibres are compact,
then $(\, ,\, )$ is complete.
Then, from $(C)$, $a^2(\, ,\, ) \leq \langle \, , \, \rangle$.
This implies that the induced metric is also complete.

\end {nota}

\

\section {The proof of the conjecture in dimension 3}

\begin {teo} \label {3-Veronese}
 Let $M^3 = H.p$ be a $3$-dimensional homogeneous submanifold
of the sphere $S^{N-1}$ which is full and irreducible
(as a submanifold of the
Euclidean space $\mathbb {R}^N$). Assume  that the normal holonomy group
of $M$ is non-transitive. Then $M$ is an orbit of an $s$-representation.

\end {teo}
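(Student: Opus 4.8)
The plan is to split into two cases according to the action of the normal holonomy group $\Phi(p)$ on $\bar{\nu}_p(M)=\{p\}^{\perp}\cap\nu_p(M)$. Since $n=3$, Proposition \ref{cota} bounds the number of nontrivial irreducible factors of this representation by $[3/2]=1$. Hence either $\Phi(p)$ fixes a nonzero vector of $\bar{\nu}_p(M)$, or it acts irreducibly. In the first case the fixed set of $\Phi(p)$ in $\nu_p(M)$ contains, besides the position vector $p$, a second independent direction, so $\mathrm{rank}(M)\geq 2$ and Theorem \ref{rank rigidity} (b) already gives that $M$ is an orbit of an $s$-representation. From now on I assume $\Phi(p)$ acts irreducibly on $\bar{\nu}_p(M)$; by hypothesis this action is non-transitive.

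The next step is to force the Euclidean codimension to take the maximal value $6=\tfrac12\cdot 3\cdot 4$, so that the results of Section 3 apply. By Remark \ref{correction} the first normal space equals the normal space, so the traceless shape operator $\tilde{A}$ injects $\bar{\nu}_p(M)$ into $\mathrm{Sim}_0(T_pM)\cong\mathrm{Sim}_0(\mathbb{R}^3)$, a space of dimension $5$. By formula (**), equivalently (****), the adapted normal curvature tensor is carried by $\tilde{A}$ to the restriction of the curvature tensor of $\mathrm{Sl}(3)/\mathrm{SO}(3)$; since the system is symmetric by Theorem \ref{Simons}, its curvature endomorphisms generate the normal holonomy algebra, which is thus carried into the subalgebra of $\mathfrak{so}(3)$ spanned by the commutators $[\tilde{A}_{\xi_1},\tilde{A}_{\xi_2}]$. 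A connected proper subgroup of $\mathrm{SO}(3)$ lies in a circle subgroup, hence is abelian; an abelian group acting irreducibly on a real space of dimension greater than one is transitive on its unit sphere, and on a line it is trivial. Both alternatives contradict our standing hypotheses (non-transitivity, respectively absence of fixed vectors), so $\Phi(p)$ maps onto $\mathrm{SO}(3)$. As $\mathrm{Sim}_0(\mathbb{R}^3)$ is $\mathrm{SO}(3)$-irreducible, $\tilde{A}(\bar{\nu}_p(M))$ is all of it, i.e. $\dim\bar{\nu}_p(M)=5$ and the codimension is $6$. Thus $M\subset S^{8}$, and Lemma \ref{SO(n)} together with Proposition \ref{constant multiplicities} are available.

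Now I would repeat the argument proving Theorem \ref{n at least 3}, which is valid word for word up to the point marked (II). Choosing $\xi\perp p$ with $A_\xi$ of eigenvalue multiplicities $(2,1)$ and passing to the (complete, immersed) holonomy tube $M_\xi$ of Remark \ref{complete}, the shape operator $\hat{A}_{\hat{\xi}}$ has two horizontal eigenvalues $\hat{\lambda}_1,\hat{\lambda}_2$, with eigendistributions $E_1,E_2$ of dimensions $2$ and $1$, and the constant vertical eigenvalue $\hat{\lambda}_3=-1$; moreover $\hat{\lambda}_1$ and $\hat{\lambda}_2$ are both functions of the single quantity $\mu=\tfrac1n\langle\hat{\xi},H\circ\pi\rangle$, so that one is constant along a curve if and only if the other is. Because $\dim E_1=2$, the Dupin condition (Lemma \ref{Dupin Condition}) gives $d\mu(E_1)=0$; equivalently, all three eigenvalues are constant along the leaves of $E_1$, which are round $2$-spheres.

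The heart of the proof---absent for $n\geq 4$, where $\dim E_2=n-2\geq 2$ lets one invoke Dupin again---is to establish $d\mu(E_2)=0$ along the remaining $1$-dimensional horizontal distribution, where Dupin gives nothing. Here I would play off two fibrations of the $5$-manifold $M_\xi$ with the same total space. The holonomy-tube fibration $\pi\colon M_\xi\to M^3$ has fibre the normal-holonomy orbit $\Phi(p).\xi$, which under $\tilde{A}$ is the $\mathrm{SO}(3)$-orbit in $\mathrm{Sim}_0(\mathbb{R}^3)$ of a matrix of eigenvalue type $(2,1)$, i.e. a Veronese $\mathbb{R}P^2$. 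The caustic fibration $g\colon M_\xi\to\Sigma^3$, obtained by focalising along $\hat{\xi}$ so as to collapse the $S^2$-leaves of $E_1$, satisfies $\mu=\bar{\mu}\circ g$ for a function $\bar{\mu}$ on the caustic, since $\mu$ is constant on those leaves. On a $\pi$-fibre $\mathbb{R}P^2$ the restriction of $\mu$ is a linear height function of the Veronese embedding, while it is at the same time pulled back through the immersion $g$; comparing the homotopy and level-set structure imposed by the two fibrations, one shows that a non-constant $\mu$ is incompatible with $M_\xi$ being fibred at once by Veronese $\mathbb{R}P^2$'s and by $2$-spheres. This delicate topological step, which has no analogue in higher dimension, is the one I expect to be the main obstacle. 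Granting $d\mu(E_2)=0$, the quantity $\mu$, hence every eigenvalue of $\hat{A}_{\hat{\xi}}$, is constant along all horizontal curves; as any two points of a holonomy tube are joined by a horizontal curve, $\hat{A}_{\hat{\xi}}$ has constant eigenvalues on $M_\xi$. Thus $\hat{\xi}$ is a non-umbilical isoparametric parallel normal field, and Theorem \ref{global isoparametric} shows that $M_\xi$, and therefore $M=(M_\xi)_{-\hat{\xi}}$, has constant principal curvatures. Since non-transitivity makes every principal holonomy tube of $M$ of codimension at least $3$, Thorbergsson's theorem \cite{Th} yields that $M$ is an orbit of an $s$-representation (and then Proposition \ref{n-Veronese} identifies it as a Veronese submanifold).
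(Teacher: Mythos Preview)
Your setup is right and coincides with the paper's: Proposition~\ref{cota} reduces to the irreducible case, Lemma~\ref{aux}/Remark~\ref{5-symmetric} force $N=9$, and the holonomy tube construction of Theorem~\ref{n at least 3} applies verbatim up to (II), leaving only $d\mu(E_2)=0$ to prove. But the paragraph you label ``the heart of the proof'' is not a proof: you write ``Granting $d\mu(E_2)=0$'' and describe the topological step only as ``comparing the homotopy and level-set structure'' of the two fibrations. That is exactly where the content lies, and your sketch does not supply it.

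More importantly, the mechanism you outline is not the one that works. The paper does \emph{not} show $d\mu(E_2)=0$ by a direct comparison of the two fibrations on $M_\xi$; instead it uses the $H$-action on $M_\xi$ throughout, and the topological argument appears only to rule out one residual case. Concretely: $H$ acts on $M_\xi$ preserving $\hat\xi$, so the eigenvalues $\hat\lambda_i$ are $H$-invariant. If $H$ is already transitive on $M_\xi$ one is done by rank rigidity. Otherwise one compares the $3$-dimensional subspace $\mathfrak h.q\subset T_qM_\xi$ with the $3$-dimensional horizontal space $\mathcal H_q$ inside the $5$-manifold $M_\xi$: either (Case~1) $E_1(q)+(\mathfrak h.q\cap\mathcal H_q)=\mathcal H_q$ at some point, and then Dupin on $E_1$ together with $H$-invariance makes all $\hat\lambda_i$ horizontally constant, hence constant; or (Case~2) $\mathfrak h.q\cap\mathcal H_q\subset E_1(q)$ everywhere, and one argues by $\dim H\in\{3,4,6\}$. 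The hard subcase is $\dim H=3$ (so $H\simeq\mathrm{Spin}(3)$ up to cover). Here one shows that the induced $H$-action on the caustic quotient $\bar M=M_\xi/\mathcal E_1$ is of cohomogeneity one with \emph{no} singular orbits (this uses the Case~2 inclusion to see all isotropy algebras are one-dimensional), so the universal cover of $\bar M$ splits off a line and $\pi_1(\bar M)$ is infinite. On the other hand, the holonomy-tube fibration $M_\xi\to\tilde M$ has $\mathbb RP^2$ fibres over a base with finite $\pi_1$, so $\pi_1(M_\xi)$ is finite, forcing $\pi_1(\bar M)$ finite---a contradiction. Thus Case~2(c) is impossible, and the remaining cases give constancy of $\hat\lambda_i$ directly.

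What your outline is missing, then, is the systematic exploitation of the $H$-action on $M_\xi$ and on $\bar M$ (including the case split on $\dim H$), and the specific fundamental-group contradiction coming from a cohomogeneity-one action without singular orbits. The height-function picture you describe on the $\pi$-fibres is correct but by itself does not produce the contradiction.
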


\begin {proof}

 Assume that $M$ is not isoparametric
(in which case it must be an orbit of an $s$-representation).
Then, by  Lemma \ref {aux},  the normal holonomy of $M$, as a submanifold
of the sphere acts irreducibly  and $N=9 = 3 + \frac {1}{2}
3 (3+1)$. We have also that
the first normal bundle, which coincides
with the normal bundle, has maximal
codimension.

 Keeping the notation and general constructions
in the proof of Theorem
\ref {n at least 3}, we have that everything is still valid up to
  {(II)}.
The only difference is that the eigenvalue
$\hat {\lambda} _2$ has multiplicity
$1$. So, we have the Dupin condition only for the eigendistribution
$E_1$ but not for the $1$-dimensional eigendistribution $E_2$.

Let $\bar M = M_\xi /\mathcal {E}_1$ be the quotient of the
(partial) holonomy tube $M_\xi$ by the (maximal)
integral manifolds of the
$2$-dimensional integrable distribution
$E_1$.

Observe that the (partial) holonomy tube $M_\xi$ has dimension $5$.
In fact, from Lemma  \ref {aux},
any focal orbit  of the restricted normal holonomy group $\Phi (p)\simeq
\text {SO}(3)$ has dimension $2$
(and it is isometric to the Veronese $V^2$).

By \cite {BCO}, Theorem 6.2.4, part (2) one has that
$H \subset \text {SO}(9)$ acts by
(extrinsic) isometries on $M_\xi$. Moreover, the projection
$\pi : M_\xi \to M$ is $H$-equivariant.

If $H.(p + \xi ) = M_\xi$, then $M_\xi$ is a full and irreducible
homogeneous Euclidean submanifold which is of higher rank. Then, in this case,
by the rank rigidity theorem for submanifolds,
$M_\xi$ is an orbit of an $s$-representation. Hence $M = (M_\xi)_{-\hat \xi}$
is an orbit of an
$s$-representation.

So, we may assume that $H.(p + \xi ) \subsetneq M_\xi$.
Let $\mathfrak {h} = \text {Lie}(H)$.
Let us consider the subspace $\mathfrak {h}.(p+\xi)$ of $T_{p+\xi}M_\xi$.
This subspace has dimension at least $3$, since
$\text {d}\pi (\mathfrak {h}.(p+\xi)) = \mathfrak {h}.p = T_pM$.
The horizontal subspace $\mathcal {H}_{(p+\xi)}$ of $T_{p+\xi}M_\xi$ has
dimension $3$. Since $T_{p+\xi}M_\xi$ has dimension $5$,
$\text {dim} (\mathcal {H}_{(p+\xi)}
\cap \mathfrak {h}.(p+\xi)) \geq 1$.

$$\text {{\it Case (1): }} \ \ \ \ \ \ E_1(x) + (\mathcal {H}_{x}
\cap \mathfrak {h}.x) = \mathcal {H}_{x}, \text { for some } x\in M_{\xi}
 \ \ \ \ \ \ \ \ \ \ \ \ \ \ \ \ \ \ \
\ \ \ \ \ \ \ \ $$

We may assume that $x = p + \xi$.
Observe that if   the above equality holds at
$(p + \xi)$ then it also holds for $q$ in some open neighbourhood $U$ of
 $(p + \xi)$ in  $M_\xi$.

Recall, continuing with the notation in
the proof of Theorem  \ref {n at least 3},
that the eigenvalues functions (which are differentiable)
 of the shape operator $\hat {A}_{\hat \xi}$
at $q$ are: $\hat {\lambda}_1 (q)$ with multiplicity $2$,
$\hat {\lambda}_2 (q)$ with multiplicity $1$ and $\hat {\lambda} _3 (q) = -1$
with multiplicity
$2$ (whose associated eigenspace  is the vertical distribution $\nu _q$).

On one hand, from the Dupin condition,
since $\text {dim}(E_1) = 2$, and the equivalence $(I)$ in
the proof of the above mentioned theorem, we have  that
$$0 = v(\hat {\lambda} _1) = v(\hat {\lambda} _2) = v (\hat {\lambda}_3)$$
for any  $v\in E_1(q)$. Or, briefly,
$$\{0\} = E_1(q)(\hat {\lambda} _1) = E_1(q)(\hat {\lambda} _2) =
E_1(q) (\hat {\lambda}_3)$$

On the other hand, if $X\in \mathfrak {h}$,
$$0 = (X.q)(\hat {\lambda} _1) = (X.q)(\hat {\lambda} _2) =
(X.q). (\hat {\lambda}_3)$$
In fact, this follows from the fact that the parallel normal field
$\hat {\xi}$ of $M_\xi$ is $H$-invariant and that $\hat {A}_{h.\hat {\xi} (q)}
= h.\hat {A}_{\hat {\xi}(q)}.h^{-1}$, for all $h\in H$.

Then, from the assumptions of this case,
$$\{0\} = \mathcal {H}_q (\hat {\lambda} _1) =
\mathcal {H}_q(\hat {\lambda} _2) =
\mathcal {H}_q(\hat {\lambda}_3) \ \ \ \ \ \ \ \text {(III)}$$
for any $q\in U$.

Since $M$ is (extrinsically) homogeneous, the local normal holonomy
groups have all the same  dimension. Then the local
 normal holonomy group at any $x \in M$ coincides with the   restricted
normal holonomy group $\Phi (x)$.

The $\nabla ^\perp$-parallel transport along short loops, based at
$p\in M$, produces a neighbourhood $\Omega$ of $e$ in the local normal
holonomy group, see \cite {CO, DO}).
 This implies, from
(III), that the eigenvalues of $\hat {A}_{\hat {\xi}(p + \omega .\xi)} $
are the same as the eigenvalues   $\hat {\lambda}_1(p + \xi)$,
$\hat {\lambda}_2(p + \xi)$, $\hat {\lambda}_3(p + \xi) = -1$
of $\hat {A}_{\hat {\xi}(p + \xi)}$, for all $\omega \in \Omega$.
From this it is  standard to show  that the eigenvalues of
$\hat {A}_{\hat {\xi}(p + \phi .\xi)}$ are the same of those of
$\hat {A}_{\hat {\xi}(p + \xi)}$, for all
$\phi \in \Phi (p)$. Therefore, the eigenvalues of $\hat {A}_{\hat {\xi}}$
are constant on $p + \Phi (p).\xi = \pi ^{-1} (\{p\})$.
Since $H$ acts
transitively on $M$, then $H. \pi ^{-1} (\{p\}) = M_\xi$.
This implies,
since $\hat {\xi}$ is $H$-invariant,
that the eigenvalues
of $\hat {A}_{\hat {\xi}}$ are constant on $M_\xi$.

Observe that  the parallel normal
field $\hat {\xi}$ is not umbilical, since $\hat {A}_{\hat {\xi}}$
 has three distinct (constant)
eigenvalues. Then, from  \cite {DO} (see Theorem 5.5.8 of \cite {BCO}),
$M_{\xi}$ has constant principal curvatures. So,
$M = (M_{\xi})_{-\hat {\xi}}$
has constant principal curvatures. If $\tilde M$ is a principal holonomy
tube of $M$, then $\tilde M$ is isoparametric  \cite {HOT}. Observe that
 $\tilde M$ is not a
hypersurface of a sphere (since the normal holonomy group, in the Euclidean space,
is not transitive on the orthogonal complement of the position vector), then
by the theorem of Thorbergsson \cite {Th, O2} $\tilde M$ is an orbit
of an $s$-representation. Then   $M$ is an orbit of an $s$-representation, since
it is a focal (parallel) manifold to $\tilde {M}$.

\,

$$\text {\it Case (2): } \ \ \ \ \ \ E_1(x) + (\mathcal {H}_{x}
\cap \mathfrak {h}.x) \subsetneq \mathcal {H}_{x}, \text { for all }
x\in M_{\xi} \ \ \ \ \ \ \ \ \ \ \ \ \ \ \ \ \ \ \ \ \ \ \ $$
or equivalently, $ (\mathcal {H}_{x}
\cap \mathfrak {h}.x) \subset E_1(x)$, since $\text {dim} (E_1 (x)) =2$
and $\text {dim} (\mathcal {H}_{x}) =3$.

This case
splits into several  sub-cases, depending
on how big is the group $H$. Namely, depending on
$\text {dim} (H)\geq 3 = \text {dim}(M)$.
The most difficult case is the generic one where
$\text {dim} (H)= 3$. For  this case we will have to use  topological
arguments.

 Note  that $\text {dim}(H) \leq 6$.
In fact, $H$ acts effectively on $M$, since $M$ is a full submanifold.
Otherwise, if $h\in H$ acts trivially on $M$ then it acts trivially on
the  (affine) span of $M$ which is $\mathbb {R}^9$. But the dimension
of the  isometry group
of an $n$-dimensional Riemannian manifold is bounded by
$\frac {1}{2}(n+1)n$ (the dimension of the isometry group of an $n$-dimensional
space of constant curvature). In our case, since $n=3$,
$\text {dim} (H)\leq 6$.

Observe that $H$ cannot be abelian. In fact if $H$ is abelian,
since the dimension
of the ambient space $N =9$ is odd, the  (connected)
subgroup $H\subset \text {SO}(9)$ must fix a vector, let us say $v\neq 0$.
So, no $H$-orbit  $H.q$  is a full submanifold, since it is contained
in $q + \{v\}^\perp$. A contradiction, since  $M = H.p$ is full.

Observe that $\text {dim} (H)$ cannot be $5$. In fact, if
$\text {dim} (H) = 5$ then the isotropy $H_p$ has dimension $2$
and so it is abelian. We regard
$H_p \subset \text {SO}(T_pM)\simeq \text {SO}(3)$,
via the isotropy representation. But the rank of $\text {SO}(3)$ is
$1$ and so it has no abelian two dimensional subgroups. A contradiction.

\

({\bf a}) $\text {dim} (H) = 6$.

In this case we must have
that $(H_p)_0 = \text {SO}(3)$, since $\text {dim} (H_p) = 3$.
Since $\text {SO}(3)$ is simple, the slice representation $sr$ of
$(H_p)_0$ on the normal space $\nu _p (M)$
 must be either trivial or its image has dimension $3$.
In the first case we obtain that all shape operators $A_\mu$ of $M$
at $p$ are a multiple of the identity, since they commute all
with $(H_p)_0$. Note that
$A _{\mu} = A_{h.\mu} = h.A _{\mu}. h^{-1}$. So $M=M^3$ is an umbilical submanifold
of $ S^8\subset \mathbb {R}^9$. So, $M$ is not full. A contradiction.

Let us deal with the case that the image of the slice representation
has dimension $3$. By \cite {BCO}, Corollary 6.2.6  $sr ((H_p)_0)\subset \Phi (p)$
where $\Phi (p)$ is the restricted normal holonomy group of $M$ as a Euclidean
submanifold. Since
$\text {dim} ({\Phi (p)}) =3$, we conclude that $sr ((H_p)_0)= \Phi (p)$.
Then, any holonomy
tube of $M$ is an $H$ orbit. In particular the principal ones, which have flat
normal bundle. But the holonomy tubes are full and irreducible Euclidean submanifolds,
which have codimension at least $3$ (since $\Phi (p)$ acts on the $6$-dimensional
normal space $\nu _p (M)$
with cohomogeneity $3$). Then, by the theorem of Thorbergsson \cite {Th, O2},
any holonomy tube is an orbit of an $s$-representation and so $M$ is an orbit
of an $s$-representation. By Proposition  \ref {n-Veronese} one has that
$M = M^3$ is a Veronese submanifold.

\

 ({\bf b}) $\text {dim} (H) = 4$.

In this case the isotropy $H_p$ has dimension
$1$. If the slice representation $sr$ of $(H_p)_0$ is trivial,
then, as in (a), all shape operators at $p$ commute with
$(H_p)_0 \simeq S^1$. A contradiction, since the family of shape
operators is $Sim (T_pM)$.

Let us then restrict to the case that the slice representation is not trivial.
For this we have to use a result of \cite {OS}
 (see \cite {BCO}, Theorem 6.2.7). In fact, we need the following
weaker version, which was the main step in the proof
of Simons holonomy theorem given in \cite {O5}. Namely,
Proposition 2.4 of  \cite {O5}: \it
for a full and irreducible
 $H$-homogeneous Euclidean submanifold $M^ n $, $n\geq 2$,
  the projections, on the normal space
$\nu _p (M)$, of the  (Euclidean) Killing fields
given by the elements of $\mathfrak {h} = \text {Lie} (H)$, belong to
the normal holonomy algebra $\mathfrak {g}$.
\rm

Then, in our situation, since $\text {dim}(\mathfrak {h}) = 4$ and
$\text {dim}(\mathfrak {g}) = 3$, there must exist $0\neq X\in \mathfrak {h}$
such that it projects trivially on the normal space. Such an $X$ cannot
be in the isotropy algebra, since we assume that the slice representation
of $(H_p)_0 \simeq S^1$ is non-trivial. This implies that
$0\neq X.p \in T_pM$.

Let us consider the $H$-invariant
parallel normal field $\hat {\xi}$ of $M_\xi$. Recall that
$(M_\xi)_{-\hat \xi} = M$ (and so $M$ is a parallel focal manifold
of $M_{\xi}$).

Since $X$ projects trivially on $\nu _p(M)$,
$X.q \in \mathcal {H}_q$,  for all
$q \in (p + \Phi (p).\xi)
= ((\pi)^{-1}( \{ p \} ))_q \subset M_\xi$.

Recall that we are in  {\it Case (2)}. Then, $X.q \in
E_1(q)$, for all $q \in (p + \Phi (p).\xi)$.
Let us consider the curve $\gamma (t) = \text {Exp}(tX).p$
of $M^3$. One has that $\gamma ' (0) = X.p \neq 0$.
Let $q \in (p +\Phi (p).\xi)$ and let   $\psi (t)$
be the normal parallel transport of
$(q-p) \in \nu _p (M)$ along $\gamma (t)$.
Then $\psi (t) =  \hat {\xi} (\gamma (t) + \psi (t))$,
as it is well known, from the construction of holonomy
tubes  \cite {HOT, BCO} (observe that $M_\xi = M_{q-p}$).
From the tube formula of \cite {BCO}, Lemma 4.4.7 (the notation in this lemma
permutes our objects),
$$A_ {(q-p)} =
\hat {A}_{(q-p)\vert \mathcal {H}}.
((Id - \hat {A}_{(q-p)})_{\vert \mathcal {H}})^{-1}$$
one has that $E_1 (q)$ is an eigenspace
of the shape operator $A_ {(q-p)}$ of $M$.

On the one hand,  since $\pi (q) = q  -\hat {\xi}(q)$,

$$\text {d} \pi ( E_1 (q))
= (Id + \hat {A}_{\hat {\xi}})( E_1 (q))\subset  E_1 (q)$$

On the other hand, since $\hat {\xi}$ is $H$-invariant and
$\hat {\xi}(q) = (q-p)$,

$$\text {d}\pi (X.q) = \frac {\text { d}}{\text {d}t}\vert _ 0
(\text {Exp} (tX). q - \hat {\xi} (\text {Exp} (tX).q)) \ \ \ \ \ \ \ \ $$
$$ = \frac {\text { d}}{\text {d}t}\vert _0
(\text {Exp} (tX). q - \text {Exp} (tX).(q -p)) = X.p \ \ \
 \ \ \ \ \ \ $$

Therefore, $X.p$ belongs to an eigenspace  of any shape operator
$A_{q-p}$ of $M$, such that $q\in  (p + \Phi (p).\xi)$
(recall that we have assumed, without loss
of generality,  that $\xi $ is perpendicular to the position
vector $p$).

Observe that  $\Phi (p).\xi$ spans $\{p\}^\perp$, since
$\Phi (p)$ acts irreducibly on $\{p\}^\perp$.
So $X.p$ is an eigenvector of any shape operator $A_\eta$,
where $\langle \eta ,p\rangle =0$.

Since
$A_p = -Id$, we conclude that $X.p$ is an eigenvector of
all shape operators of $M$ at $p$. This is a contradiction,
since the family of shape operators at $p$ coincides with
$Sim (T_pM)$.

\

({\bf c}) $\text {dim} (H) = 3$.

Since we have  excluded the case where $H$ is
abelian, then $H$ must be  simple, with  universal
cover the (compact) group $\text {Spin}(3)\simeq S^3$.
This case is the generic
one where the isotropy is finite. Note that $M$ must
be compact.

Also note that the  (full) normal holonomy group
$\tilde {\Phi}(p)$ of $M$
is compact. In fact,
 $(\tilde {\Phi}(p))_0$ coincides with
 the restricted normal holonomy group $\Phi(p)$.
Moreover, $\tilde {\Phi}(p)$ is included in  the compact group
$N(\Phi (p))$, the normalizer
 of ${\Phi}(p)$ in $\text {O}(\nu_p(M))$. Observe that
$(N(\Phi (p)))_0 = \Phi (p)$, since $\Phi (p)$ acts as an
$s$-representation (see \cite {BCO} Lemma 6.2.2). Then
$\tilde {\Phi}(p)$ has a finite number of connected components, as well
as $\tilde {\Phi}(p).\xi$.
This implies that $M_{\xi}$ is compact.

Let us construct the so-called {\it caustic fibration}.
The eigenvalues functions of  $\hat {A}_{\hat {\xi}}$
are bounded
on $M_{\xi}$.
Since $M$ is contained in a sphere, $M_{\xi}$ is contained in a
(different) sphere. If $\eta $ is the position vector field
of $M_{\xi}$, then $\eta $ is an umbilical parallel normal field.
In fact,
 $\hat A _\eta = - \text {Id}$. By adding, eventually,
to the parallel normal field
 $\hat {\xi}$ a (big) constant multiple of $-\eta $ we obtain a new parallel
 and $H$-invariant normal field, such that its associated shape
operator has the same
 eigendistributions
 as $\hat A _{\hat \xi}$ and  all of the
three eigenvalues functions are
 everywhere positive and so nowhere vanishing. Just for the sake of simplifying
the notation, we also denote  this perturbed normal field  by $\hat \xi$.
The eigenvalues of $\hat {A}_{\hat {\xi}}$ are also denoted by $\hat {\lambda} _1$,
$\hat {\lambda} _2$, $\hat {\lambda} _3$, which differ from the original
ones by a (same) constant $c$.

The {\it caustic} map $\rho$, from $M_\xi$ into $\mathbb {R}^9$,
$q \overset {\rho} {\mapsto} q + (\hat {\lambda} _1(q))^{-1}\hat {\xi}(q)$ has
constant rank. In fact, $ {ker}(\text {d} \rho)= E_1$ has constant dimension
 $2$, since from the Dupin condition,  $\hat {\lambda} _1$
is constant along any integral manifold $Q_1(q)$ of $E_1$. Observe that
$\hat {\lambda}_2$ is also constant along $Q_1(q)$, due to equivalence (I) in the
proof of Theorem \ref {n at least 3} (and the same is true, of course, for the
third eigenvalue  $\hat {\lambda}_3 \equiv -1 + c$).

Let $\bar M = M_\xi /\mathcal {E}_1$ be the quotient
of $M_\xi$ by the family $\mathcal {E}_1 $ of (maximal) integral
manifolds of $E_1$.  From Lemma \ref {bundle-like} we have that $\bar M$
is a compact $3$-manifold immersed in $\mathbb {R}^9$, via the projection
$\bar {\rho}$, of the caustic map $\rho$, to
the compact quotient manifold $\bar M$. Moreover, $\bar {\pi}:M_\xi \to  \bar M$
is a fibration, where $\bar {\pi} : M_\xi \to  \bar M$ is the projection.
The distribution $E_1$ is $H$-invariant, since $\hat \xi$ is so.
So, the action of $H$ on $M_\xi$ projects down to an action on $\bar M$.
 So, $\bar  \pi$ is $H$-equivariant.

Observe that $\rho$ is $H$-equivariant, since $\hat \xi$ is $H$-invariant.
Then,  since
$\bar \pi $ is $H$-equivariant, the immersion
$\bar \rho : \bar M \to \mathbb {R}^9$
is $H$-equivariant.

We have the following two $H$-equivariant fibrations on $M_\xi$:

$$\ 0 \stackrel{}{\rightarrow}  {\Phi}(p).\xi \stackrel{}{\rightarrow}M_\xi
\stackrel{\tilde {\pi}}{\rightarrow}
 \tilde {M} \stackrel{}{\rightarrow} 0 \ \ \ \ \ \ \ \
\text {\bf (holonomy tube fibration) }$$

 $$0 \stackrel{}{\rightarrow} Q \stackrel{}{\rightarrow}M_\xi
\stackrel{\bar  {\pi}}{\rightarrow}
 \bar M \stackrel{}{\rightarrow} 0 \ \ \ \ \ \ \ \ \text {\bf (caustic fibration) }
 \ \ \ \ \ $$

\

 \noindent where $Q$ is any integral manifold of $E_1$ and $\tilde {M}$
is the quotient manifold $M_{\xi}$ over the connected component
of the fibres of $\pi : M_{\xi} \to  M$, which are orbits of the restricted
normal holonomy groups $\Phi (p)$, $p\in M$. We have that $\tilde {M}$
is a finite cover of $M$.

{\it Recall that we are under  the  assumptions of
Case (2)}

We will derive  a   {\it topological} contradiction.
This is by using that the holonomy tube $M_\xi$
is the total space of above two different fibrations.

On the one hand the holonomy tube has a finite fundamental group
$\pi _ 1 (M_\xi)$. This follows from the long exact sequence
of homotopies, associated to the holonomy tube fibration. In fact,  the fibres
are (real) projective $2$-spaces (which have a finite fundamental group).
Moreover, the base space $\tilde {M}$ has also a finite fundamental group, since
it is an orbit, with finite isotropy, of the group
$\text {Spin}(3)\simeq S^3$.
 Since the fibres of the caustic fibration are connected and the total space
$M_{\xi}$ has finite fundamental group,
then  the caustic (base) manifold $\bar {M}$ has a finite fundamental group.

On the other hand, from Lemma \ref {caustic-polar} we have that the fundamental
group of the caustic manifold $\bar {M}$ is not finite (this is by showing that
$H$ acts with cohomogeneity $1$  and without singular orbits on $\bar {M}$).

A contradiction. So we can never be under the assumptions of
 Case (2) \ if $H\simeq \text {Spin}(3)$.

\

This finishes the proof   that $M$ is an orbit of an
$s$-representation.
\end {proof}

\begin {lema} \label {aux} We are in the assumptions of Theorem \ref {3-Veronese}.
Then, if $\text {rank} (M) = 1$,
  the (restricted) normal holonomy group $\Phi (p)$,
as a submanifold of the sphere,
 acts irreducibly and $N = 9$. Moreover, the (restricted) normal holonomy
acts as the
action of  $\text {SO}(3)$, by conjugation, on the traceless
$3\times 3$-symmetric matrices. Furthermore, the traceless shape operator
$\tilde A$ of $M$ at $p$ is $\text {SO}(3)$-equivariant.

\end {lema}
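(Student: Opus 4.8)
The plan is to first establish irreducibility of the restricted normal holonomy, then to force the maximal codimension $N=9$ by a low-dimensional classification argument, and finally to read off the explicit description and the equivariance of $\tilde A$ by specializing Lemma \ref{SO(n)}, which only becomes available once the codimension is known to be maximal. First I would exploit $\text{rank}(M)=1$: the only $\Phi(p)$-fixed direction in $\nu_p(M)$ is $\mathbb{R}p$, so $\Phi(p)$ acts on $\bar{\nu}_p(M)=\{p\}^\perp\cap\nu_p(M)$ with no nonzero fixed vector, and by the normal holonomy theorem this action is an $s$-representation, hence a product of irreducible factors (recall that for homogeneous $M$ the local and restricted normal holonomy groups coincide). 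Invoking the sharp bound $[\frac{n}{2}]$ on the number of irreducible factors (Proposition \ref{cota}), for $n=3$ there is at most $[\frac{3}{2}]=1$ factor; as $\mathcal{R}^\perp\neq 0$ there is exactly one, so $\Phi(p)$ acts irreducibly on $\bar{\nu}_p(M)$. Being non-transitive by hypothesis, this single factor is non-transitive, and by the Simons holonomy theorem $[\bar{\nu}_p(M),\mathcal{R}^\perp,\Phi(p)]$ is an irreducible symmetric holonomy system; equivalently $\Phi(p)$ acts on $\bar{\nu}_p(M)$ as the isotropy representation of an irreducible symmetric space of rank $\geq 2$.

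The main obstacle is to prove $N=9$, i.e. $\dim\bar{\nu}_p(M)=5$, and here Lemma \ref{SO(n)} is of no help, since it presupposes the codimension to equal $\frac{1}{2}n(n+1)$. The upper bound is immediate: the normal holonomy has no fixed vectors and its unique factor is non-transitive, so Remark \ref{correction} gives $\text{codim}(M)\leq\frac{1}{2}\cdot 3\cdot 4=6$, that is $\dim\bar{\nu}_p(M)\leq 5$. For the reverse inequality I would classify the possibilities in low dimension: an irreducible Riemannian symmetric space of rank $\geq 2$ has an irreducible rank-two restricted root system ($A_2$, $B_2$ or $G_2$, with at least three positive roots), hence dimension at least $5$, with equality only for $\text{Sl}(3)/\text{SO}(3)$ (equivalently $\text{SU}(3)/\text{SO}(3)$), whose isotropy representation is the action of $\text{SO}(3)$ by conjugation on the traceless symmetric $3\times 3$ matrices $Sim_0(3)$. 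Since our holonomy system is irreducible, non-transitive and of dimension at most $5$, it must be exactly this one; therefore $\dim\bar{\nu}_p(M)=5$, $N=9$, and the restricted normal holonomy is $\text{SO}(3)$ acting by conjugation on $Sim_0(3)$, as claimed.

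Finally, with $N=9$ and the normal holonomy now known to be irreducible and non-transitive, $M^3\subset S^{8}$ satisfies precisely the hypotheses of Lemma \ref{SO(n)}, and applying it delivers the remaining assertions: the representation of $\Phi(p)$ on $\bar{\nu}_p(M)$ is equivalent to the isotropy representation of $\text{Sl}(3)/\text{SO}(3)$, and the traceless shape operator $\tilde A:\bar{\nu}_p(M)\to Sim_0(T_pM)$ is a homothecy carrying $\Phi(p)$ equivariantly onto $\text{SO}(T_pM)\cong\text{SO}(3)$, which is exactly the asserted $\text{SO}(3)$-equivariance. The injectivity of $\tilde A$ on $\bar{\nu}_p(M)$ implicit here, underlying formula (****), is handled inside the proof of Lemma \ref{SO(n)}; alternatively it follows from the non-degeneracy of $\mathcal{R}^\perp$ together with the refined umbilicity argument of Remark \ref{correction}.
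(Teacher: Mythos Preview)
Your proof is correct and follows essentially the same route as the paper: use $\text{rank}(M)=1$ together with Proposition \ref{cota} to force irreducibility of $\Phi(p)$ on $\bar\nu_p(M)$, use Remark \ref{correction} to bound $\dim\bar\nu_p(M)\le 5$, classify the irreducible rank $\ge 2$ symmetric spaces of dimension $\le 5$ to pin down $\text{Sl}(3)/\text{SO}(3)$ and $N=9$, and finish with Lemma \ref{SO(n)}. The only difference is in the classification step: the paper invokes its Remark \ref{5-symmetric}, which gives a self-contained holonomy-systems argument, whereas you argue via restricted root systems (minimal dimension $=2+\#\{\text{positive roots}\}\ge 5$, equality only for $A_2$); both arrive at the same conclusion. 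One small imprecision in your write-up: you assert that an irreducible symmetric space of rank $\ge 2$ has a \emph{rank-two} root system, which is of course only true once the dimension bound $\le 5$ is imposed; you should say so explicitly.
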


\begin {proof}
Let us regard $M^3$ as a submanifold of the Euclidean space $\mathbb
{R}^N$.
If $M$ is not of higher rank one has, from Proposition \ref {cota},
that the   (restricted)
normal holonomy group $\Phi (p)$ acts irreducibly on $\bar {\nu}(p)$
(the orthogonal complement
of the position vector $p$).
Since $\Phi (p)$ is non-transitive (on the unit sphere of
$\bar {\nu}_p(M)$),
the first normal space, as a submanifold of the Euclidean space,
 coincides with the normal space (see Remark \ref
{correction}).
Then, the codimension $k =N-3$  satisfies
$k\leq 6 = \frac {1}{2}3(3+1)$.
Then the normal holonomy group representation
coincides with the isotropy
representation of an irreducible symmetric space
of  rank at least $2$ and dimension at most $5$.
Then, by Remark \ref {5-symmetric}, the normal holonomy
representation is equivalent to the   isotropy
representation of $\text {Sl}(3)/\text {SO}(3)$.
 So the codimension of $M$, in the sphere,
is $5$ and hence $N=9$.  The equivariance follows form
Lemma \ref {SO(n)}.

\end {proof}

\begin {lema}  \label {bundle-like} \rm (Caustic fibration lemma). \it
Let $\hat {M}$ be a compact immersed submanifold of $\mathbb {R}^N$ which is
contained in the sphere $S^{N-1}$. Let
$\hat {\xi} $ be a parallel normal field to $\hat {M}$
such that the eigenvalues
of the shape operator $A_{\hat {\xi}}$ have constant multiplicities on $\hat {M}$.
Let $\hat {\lambda} : \hat {M} \to \mathbb {R}$ be an eigenvalue function of
$A_{\hat {\xi}}$ whose associated (integrable)
eigendistribution $E$ has (constant) dimension
at least $2$. Let $\mathcal {E}$ be the family of (maximal)
integral manifolds of
$E$. Assume that the eigenvalue function $\hat {\lambda}$ never vanishes
{\rm (this can always be assumed by adding to $\hat {\xi}$ an appropriate
constant multiple of the umbilical position vector)}. Then

{\it \ \ (i)} Any integral manifold $Q  \in \mathcal {E}$
is compact.

{\it \  (ii)} The quotient space $\bar {M} = \hat {M}/\mathcal {E}$
is a (compact) manifold and the projection $\pi : \hat {M}
\to \bar {M}$ is a
fibration (in particular, a submersion).

{\it \  (iii)} The caustic map $\rho : \hat {M} \to \mathbb {R}^N$,
 $\rho (q) = q + (\hat {\lambda} (q))^{-1}\hat {\xi}(q)$, projects down
to an immersion $\bar {\rho}: \bar {M} \to \mathbb {R}^N$ (i.e. $\rho =
\bar {\rho}\circ \pi$).

\end {lema}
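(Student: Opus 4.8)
The plan is to reduce everything to the observation that the caustic map $\rho$ has constant rank, with $\ker d\rho = E$ exactly, and then to upgrade the resulting local structure to a genuine fibration using compactness of $\hat M$ together with a holonomy-vanishing argument. First I would compute the differential of $\rho = \mathrm{id}_{\hat M} + \hat{\lambda}^{-1}\hat{\xi}$, viewed as a map into $\mathbb{R}^N$. Since $\hat{\xi}$ is $\nabla^\perp$-parallel, the Weingarten formula gives $D_v\hat{\xi} = -A_{\hat{\xi}}v$ (tangential), so for $v\in T_q\hat M$
$$d\rho_q(v) = \bigl(v - \hat{\lambda}^{-1}A_{\hat{\xi}}v\bigr) - \hat{\lambda}^{-2}\,(d\hat{\lambda}(v))\,\hat{\xi},$$
where the first summand is tangent to $\hat M$ and the second is normal. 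Decomposing $v$ into eigencomponents of $A_{\hat{\xi}}$, the tangential part vanishes precisely when $v\in E$, and on $E$ the Dupin Condition (Lemma \ref{Dupin Condition}, applicable because $\dim E\geq 2$) forces $d\hat{\lambda}(v)=0$, killing the normal part as well. Hence $\ker d\rho_q = E_q$ for every $q$, so $\rho$ has constant rank $\dim\hat M-\dim E$.

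Given constant rank, the fibres $\rho^{-1}(c)$ are closed submanifolds of the compact manifold $\hat M$, hence compact, and their connected components are exactly the maximal integral manifolds of $E=\ker d\rho$; this yields (i). One also sees directly that each leaf $Q$ lies on the round sphere of radius $|\hat{\lambda}|^{-1}\,\|\hat{\xi}\|$ centred at the common value $\rho(Q)$, since $\|\hat{\xi}\|$ is constant and $\hat{\lambda}$ is constant along $Q$.

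For (ii) the main point, and the hardest step, is to show that the foliation $\mathcal{E}$ by these compact leaves is in fact a fibre bundle. The key observation is that the globally defined map $\rho$, being constant on leaves and an immersion on any transversal (because $\ker d\rho=E$ is transverse to a slice $D$ with $T_qD=E^\perp_q$), forces the holonomy of every leaf to be trivial: for a loop $\gamma$ in a leaf $Q$ and $x\in D$ near $q$, the holonomy image $h_\gamma(x)$ lies in the same leaf as $x$, so $\rho(h_\gamma(x))=\rho(x)$; since $\rho$ is injective on a small enough $D$ and $h_\gamma(x)\in D$, we get $h_\gamma(x)=x$. With all leaves compact and of trivial holonomy, local Reeb stability provides, around each leaf, a saturated neighbourhood foliated-diffeomorphic to $Q\times D$ with leaves $Q\times\{pt\}$. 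These charts make the leaf space $\bar M=\hat M/\mathcal{E}$ a Hausdorff smooth manifold and exhibit $\pi:\hat M\to\bar M$ as a locally trivial fibration with fibre $Q$; compactness of $\bar M$ follows since it is the continuous image of the compact $\hat M$.

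Finally, for (iii), since $\rho$ is constant on the fibres of $\pi$ it factors as $\rho=\bar{\rho}\circ\pi$ for a smooth map $\bar{\rho}:\bar M\to\mathbb{R}^N$; from $d\rho=d\bar{\rho}\circ d\pi$, with $d\pi$ surjective of kernel $E$ and $\ker d\rho=E$, we conclude $d\bar{\rho}$ is injective, i.e.\ $\bar{\rho}$ is an immersion. I expect the delicate point to be the passage from the local product structure to a genuine global fibration in (ii); the triviality-of-holonomy argument, made possible by the fact that $\rho$ separates leaves transversally while collapsing each leaf to a point, is precisely what removes the usual obstructions (bad sets, nontrivial holonomy) to a foliation by compact leaves being a bundle.
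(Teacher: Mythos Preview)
Your proof is correct and follows the same overall architecture as the paper's: both compute $d\rho$, identify $\ker d\rho = E$ via the Weingarten formula and the Dupin condition, deduce constant rank, and use this to obtain compactness of leaves and the immersion $\bar\rho$.

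The only substantive difference is in part~(ii). The paper argues that the foliation $\mathcal E$ is regular in Palais' sense (using a finite cover from the constant-rank local form), then modifies the metric on $E^\perp$ so that $\rho$ becomes locally a Riemannian submersion, obtaining a \emph{bundle-like} metric in Reinhart's sense, and invokes Reinhart's theorem to conclude that the leaf space is Hausdorff and $\pi$ is a fibration. You instead show directly that the leaf holonomy is trivial---because $\rho$ collapses leaves but is an embedding on small transversals---and then apply local Reeb stability to get the product charts. Your route is more elementary in that it avoids the Riemannian bundle-like machinery and the citation to Reinhart; the paper's route is shorter to write given those references, and fits the Riemannian flavour of the surrounding arguments. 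Either way, the crucial input is the same: $\rho$ is a globally defined constant-rank map that separates nearby leaves, which is exactly what rules out bad leaf-space behaviour.
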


\begin {proof}

From the Dupin condition, see Lemma \ref {Dupin Condition}, one has
that $\hat {\lambda}$ is constant along any integral manifold
$Q$ of $E$.

Consider the caustic map $\rho (q) = q + (\lambda (q))^{-1}\hat {\xi}(q)$
(see the proof of
Theorem \ref {3-Veronese}, Case (2),(c)). Then $ker (\text {d}\rho) = E$ and
so $\text {d} {\rho}$ has constant rank.
From the local form of a map with constant rank and the compactness
of $\hat {M}$ one has that there exists a finite open cover $V_1, ... , V_d$ of
$\hat {M}$ such that,  for any $i=1, ..., d$ and $q, q'\in V_i$,  the following
equivalence holds:
 $$\rho (q) = \rho (q')  \iff q \text { and } q'
\text { belong both to a same integral manifold of }
E.$$

This implies that any (maximal) integral manifold $Q$ of $E$ must be a
closed subset
of $\hat {M}$ and hence compact.
 Moreover, the above equivalence implies that
the foliation $\mathcal {E}$ is a regular foliation in the sense of
Palais  \cite {P}.

In order to prove that the quotient is a manifold we need to prove that
this quotient is Hausdorff. But this can be done as follows:
let $E^\perp$ be the distribution which is perpendicular, with respect
to the  metric, induced by the ambient space,
on $\hat M$. Let us define
a new Riemannian metric $\langle \, ,\, \rangle$ on
$\hat M$ by changing  the induced metric $(\, ,\, )$
on the distribution $E^\perp$ in such a way that $\rho $ is locally a Riemannian
submersion onto its image. Namely,

{\bf  . }$\langle E , E^\perp \rangle =0$.

{\bf  . } $\langle \, ,\, \rangle$ coincides with $(\, ,\, )$ when restricted
to $E$

{\bf .} $\text {d}_{\vert q}\rho$ is a linear isometry from  $(E^\perp)_q$
onto its image.

Such a metric is a bundle-like metric in the sense of Reinhart   \cite {Re}
Since $\hat M$ is compact,  $\langle \, ,\, \rangle$ is a
complete Riemannian
metric. Then, \cite {Re},Corollary 3, pp. 129, the quotient space $\bar M$ is
Hausdorff and $\pi$ is a fibration (cf. \cite {DO}, Proposition 2.4, pp. 83)

Then one has that the map $\rho $ projects down to
an immersion
$\bar {\rho}: \bar M \to \mathbb {R}^N$ and
$\rho = \bar {\rho}\circ \pi$.

\end {proof}

\

\begin {lema} \label {caustic-polar}
We keep the assumptions of Theorem \ref {3-Veronese}.
Moreover, we are in the assumptions and notation  of Case (2)(c),
inside the proof
of this theorem {\rm (in particular,   $H \simeq \text {Spin}(3)$, up
to a cover)}.

\,

\noindent \it \ \ (i) \rm All orbits of the action of $H$ on $\bar M$ have
 dimension $2$.

\,

\noindent \it \ (ii) \rm  The universal cover $\tilde {M}$ of $\bar M$
splits off a line and hence
the fundamental group of $\bar M$ in not finite (since  $\bar M$
is compact).

\end {lema}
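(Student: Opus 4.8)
The plan is to prove (i) by computing the dimension of the isotropy groups of the induced $H$-action on $\bar M$, and then to read off (ii) from the cohomogeneity-one structure of that action.

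For (i), I would fix $x\in M_\xi$ and recall that in Case (2)(c) the isotropy $H_x$ is finite, so the infinitesimal orbit map $X\mapsto X.x$ is injective on $\mathfrak{h}=\text{Lie}(H)$ and $\dim(\mathfrak{h}.x)=3$. Since $\bar\pi\colon M_\xi\to\bar M$ is $H$-equivariant with $\ker(\mathrm{d}\bar\pi)=E_1$, the isotropy subalgebra of the $H$-action on $\bar M$ at $\bar\pi(x)$ is $\{X\in\mathfrak{h}:X.x\in E_1(x)\}$, which corresponds under the injection above to $\mathfrak{h}.x\cap E_1(x)$; hence $\dim(\mathfrak{h}_{\bar\pi(x)})=\dim(\mathfrak{h}.x\cap E_1(x))$. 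Because $E_1\subset\mathcal{H}$ and the Case (2) hypothesis gives $\mathfrak{h}.x\cap\mathcal{H}_x\subset E_1(x)$, this intersection equals $\mathfrak{h}.x\cap\mathcal{H}_x$. I would then bound it from both sides: $\dim(\mathfrak{h}.x\cap\mathcal{H}_x)\geq\dim(\mathfrak{h}.x)+\dim(\mathcal{H}_x)-\dim(M_\xi)=3+3-5=1$, while $\dim(\mathfrak{h}.x\cap E_1(x))\leq\dim(E_1)=2$. Thus $\mathfrak{h}_{\bar\pi(x)}$ has dimension $1$ or $2$. The decisive point is that $\mathfrak{h}\cong\mathfrak{su}(2)$ is simple and therefore has no $2$-dimensional subalgebra; hence $\dim(\mathfrak{h}_{\bar\pi(x)})=1$ and every orbit $H.\bar\pi(x)$ has dimension $3-1=2$, which is (i).

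For (ii), statement (i) says that $H$ acts on the compact connected $3$-manifold $\bar M$ with cohomogeneity one and no singular orbits, since a singular orbit would have dimension strictly less than $2$; the principal isotropy has identity component a circle in $\text{Spin}(3)$, and the principal orbit $F$ is $S^2$ or $\mathbb{R}P^2$. I would then invoke the structure of compact cohomogeneity-one manifolds without singular orbits: equipping $\bar M$ with an $H$-invariant metric, the line field orthogonal to the codimension-one orbit foliation integrates to complete normal geodesics meeting every orbit, and the normal exponential map exhibits $\bar M$ (or its at most double orientation cover) as a quotient of $F\times\mathbb{R}$. Consequently the universal cover of $\bar M$ is diffeomorphic to $\tilde F\times\mathbb{R}$; in particular it splits off a line and is noncompact, so $\pi_1(\bar M)$ cannot be finite, because a finite cover of the compact manifold $\bar M$ would be compact. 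This gives (ii).

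The main obstacle is the step in (i) that pins the isotropy dimension down to exactly $1$: the geometric input (the dimension count together with the Case (2) inclusion) only forces it to lie in $\{1,2\}$, and it is precisely the simplicity of $\mathfrak{su}(2)$, i.e. the absence of a $2$-dimensional subalgebra, that rules out a $1$-dimensional (singular) orbit and thereby produces cohomogeneity one with no singular orbits. A secondary issue to treat with care in (ii) is the possible occurrence of exceptional (as opposed to singular) orbits, which could make the orbit space a closed interval rather than a circle; but even then $\bar M$ is a union of two twisted interval bundles, $\pi_1(\bar M)$ is an infinite amalgamated product, and the universal cover is still $\tilde F\times\mathbb{R}$, so the conclusion that it splits off a line and that $\pi_1(\bar M)$ is infinite is unaffected.
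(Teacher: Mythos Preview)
Your argument for (i) is correct and follows the same line as the paper's: identify $\mathfrak{h}_{\bar\pi(x)}$ with $\{X\in\mathfrak{h}:X.x\in E_1(x)\}$, bound its dimension between $1$ and $2$ by the dimension count, and exclude $2$ using that $\mathfrak{su}(2)$ has no $2$-dimensional subalgebra. Your version is in fact slightly tidier: because you use the injectivity of $X\mapsto X.x$ (finite isotropy in Case~(2)(c)) to get $\dim\mathfrak{h}_{\bar\pi(x)}\le\dim E_1=2$ directly, you never need the paper's separate step ruling out $\dim\mathfrak{h}_{\bar\pi(x)}=3$.

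For (ii) the paper takes a different, more hands-on route than your appeal to the general cohomogeneity-one structure theory. Instead of citing Mostert--Bredon, it builds an explicit $H$-invariant metric on $\bar M$ by replacing the metric along the orbits with the normal homogeneous metric coming from the bi-invariant metric on $\mathfrak{h}$, while keeping the induced metric on the orthogonal line field $\mathcal{D}$. Lifting to the universal cover, the unit section $\tilde Z$ of $\tilde{\mathcal D}$ is shown to be Killing, which forces the shape operator of each orbit in the direction $\tilde Z$ to vanish; hence both $\tilde{\mathcal U}$ and $\tilde{\mathcal D}$ are autoparallel, therefore parallel, and de Rham gives $\tilde M=\mathbb{R}\times M'$. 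Your approach is perfectly valid and arguably quicker; the paper's buys a self-contained argument that avoids invoking the classification of cohomogeneity-one manifolds and sidesteps the exceptional-orbit discussion entirely by working metrically on the universal cover.
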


\begin {proof}

The action of $H$ on $M_\xi$ projects down to $\bar {M}$, since
$\hat {\xi}$ is $H$-invariant and so any eigendistribution of
$\hat {A}_{\hat {\xi}}$ is $H$-invariant.
Let  $q\in M_\xi$. Then the $3$-dimensional subspace $\mathfrak {h}.q
\subset T_qM_\xi$
 intersects the $3$-dimensional horizontal subspace $\mathcal {H}_q$
in a non-trivial subspace, since
$\text {dim}(M_\xi) = 5)$.
Since we are in {\it Case (2)},
$$\{0\} \neq (\mathfrak {h}.q \cap \mathcal {H}_q)
 \subset E_1(q)$$
Let $H_{\bar q}$ be the isotropy group of  $H$ at the point
$\bar q = \bar \pi (q) \in \bar {M}$. Let $ {\mathfrak {h}}_{\bar q} =
\text {Lie}(H_{\bar q})$. Then one has that
$$ {\mathfrak {h}}_{\bar q } = \{X \in \mathfrak {h}:
X.q'  \in E_1 (q') \}$$
independent of  $q'\in S_1(q) =
(\bar {\pi})^{-1}(\bar {\pi} (\{q\}))$,
since a Killing field that is tangent to an integral manifold $S_1(q)$ of
$E_1$, at some point, must be always tangent to it
(since the action projects
down to the quotient).

If $\text {dim} ({\mathfrak {h}}_{\bar q })=3$. Then
$  {\mathfrak {h}}_{\bar q } = \mathfrak {h} $. Then
$H$ leaves invariant the $2$-dimensional integral manifold
$S_1(q)$ of $E_1$ by $q$. Then the isotropy $H_q$ has positive dimension.
But $H_q \subset H_{\pi (q)}$, where  $H_{\pi (q)}$ is the isotropy
group of $H$ at the point $\pi (q) \in M^3 = H.p$.
A contradiction, since $\text {dim}(H) = 3$.

Observe that  $\text {dim} ({\mathfrak {h}}_{\bar q })\neq  2$.
In fact, if this dimension is $2$, then
${\mathfrak {h}}_{\bar q }$ is an ideal of the $3$-dimensional
(compact type)
 Lie algebra
$\mathfrak {h}$. A contradiction, since
$\mathfrak {h}$ is simple.
We have used that a Lie subalgebra of codimension $1$ of a Lie algebra which
admits a bi-invariant metric must be an ideal. (Also, this $2$-dimensional
Lie subalgebra should
be abelian, in contradiction with $rank ({\mathfrak {h}}) =1$).

Then $\text {dim} ({\mathfrak {h}}_{\bar q })=1$
for all $q\in M_\xi$.
This implies that all $H$-orbits in $\bar M$ have dimension $2$.
 Since $H$ acts with cohomogeneity $1$ on $\bar M$
then, the universal cover of $\bar {M}$ cannot be compact. Otherwise, as it
is well-known,  there
would exist a singular orbit
(after lifting the action to the universal cover).

For the sake of self-completeness we will show the argument of this
assertion.

We  define an auxiliary Riemannian metric
 on $\bar {M}$, by changing, along the $H$-orbits, the  metric
$ \langle \, , \, \rangle $ induced by the immersion $\bar \rho$.

Since $H$ acts with cohomogeneity $1$ on $\bar M$, $H$ acts locally polarly.
In particular, the one dimensional distribution $\mathcal {D}$
on $\bar M$, perpendicular to
the $H$-orbits, is an autoparallel distribution.
If $\bar {q} \in \bar {M}$ then we put  on the orbit $H.\bar {q}$
the normal homogeneous
metric. That is, the metric associated to the reductive decomposition
$$\mathfrak {h} = \mathfrak {h}_{\bar {q}} \oplus
(\mathfrak {h}_{\bar {q}})^\perp$$
where the orthogonal complement is taken with respect to
a (fixed) bi-invariant metric on $\mathfrak {h}$.

We define $\langle \, , \, \rangle'$ by:

\noindent {\it a)}  $\langle \, , \, \rangle'_{\vert \mathcal {D}}
= \langle \, , \, \rangle_{\vert \mathcal {D}} $

 \noindent {\it b)} $\langle \mathcal  {U} , \mathcal {D} \rangle'
= 0$, where $\mathcal {U}$ is the distribution given by
the tangent spaces of the $H$-orbits on $\bar M$.

\noindent {\it c)} $\langle \, , \, \rangle'_{\vert \mathcal {U}_{\bar q}}$
coincides with the normal homogeneous metric  of
$H.\bar q$, for any $\bar {q}$ on $\bar {M}$

Since $\bar {M}$ is compact, the metric $\langle \, , \, \rangle'$
is complete. Let $\langle \, , \, \rangle'$ also denote the lift of
the Riemannian metric $\langle \, , \, \rangle'$ to the universal cover
$\tilde {M}$ of
$\bar {M} $. Then $(\tilde M , \langle \, , \, \rangle ')$ is a complete
Riemannian manifold. Let us denote by $\tilde {\mathcal {U}}$ and
$\tilde {\mathcal {D}}$ the lifts to $\tilde M$ of the distributions
$\mathcal {U}$ and $\mathcal {D}$, respectively.
Let us also lift the $H$-action on $\bar M$
to $\tilde M$
Then, since $\tilde M$
is simply connected,
the one dimension distribution $\tilde {\mathfrak {D}}$ is parallelizable.
Namely, there exists a nowhere vanishing vector field $\tilde X$ of $\tilde M$
such that $\mathbb {R}.\tilde X = \tilde {\mathfrak {D}}$.
Let $\tilde Z = \frac {1}{\Vert \tilde X \Vert} \tilde X$, where
the norm is with   the metric $\langle \, , \, \rangle '$.
Then, the flow $\phi _t $, associated to $\tilde {Z}$, is by isometries.
So  $\tilde Z$ is a Killing field.
Then $\langle \nabla _.\tilde Z, .\rangle ' $ is skew-symmetric.
So, in particular,  $\langle \nabla _v\tilde Z, v\rangle ' = 0 $,
for any  vector $v$ that lies in  $\tilde {\mathcal {U}}$.
But, if $\tilde A_{\tilde Z}$
is the shape operator of the orbit $H.x$, $x\in \tilde M$, then
$\langle \tilde A_{\tilde Z}v, v\rangle ' =
\langle \nabla _v\tilde Z, v\rangle ' = 0$. Then
$\tilde {\mathcal {U}} = \tilde {\mathcal {D}}^\perp $
is an  autoparallel distribution. The distribution
 $\tilde {\mathcal  {D}} $ is also autoparallel,
since the Killing fields induced by $H$ are always perpendicular
to it.
 But two complementary perpendicular
autoparallel distribution must be parallel.
Then, by the de Rham decomposition theorem, $\tilde M$ is a Riemannian product.
Since one of the parallel distributions is one dimensional then
$\tilde M = \mathbb {R}\times M'$.

\end {proof}

\

\begin {nota} \label {at most 4} In this paper, for dealing with
homogeneous submanifolds of dimension $3$, we need to know which are
the compact Lie groups $G$ of dimension at most $4$.
For the sake of self-completeness we will briefly
show, without using classification results,
 which are these compact Lie groups $G$ (up to covering spaces).

We will
use the following fact that it is well-known and standard
to show: \it a codimension $1$ subgroup,
of a Lie group with a bi-invariant metric,
must be a normal subgroup.\rm

\it (i) \rm $\text {dim} (G) \leq 2$.

\noindent In this case, from the above fact,
one has that $G$ must be abelian.

\it (ii) \rm $\text {dim} (G) = 3$.

\noindent If $\text {rank} (G) \geq 2$ then, from the above fact,
$G$ must be abelian.
If $\text {rank} (G) =1$, then $G$ is, up to a cover,
$\text {Spin} (3)$.
This well-known result  follows from a topological argument
that proves that a rank $1$ simply connected compact group is
isomorphic to
$\text {Spin} (3)$
(a proof can be found in Remark 2.6 of \cite {OR}).

\it (iii) \rm $\text {dim} (G) = 4$.

\noindent  If $G$ is neither  simple nor abelian,
then, from the previous
cases, we have that a finite cover of $G$
splits as $S^1\times \text {Spin}(3)$.

\noindent  If $G$ is simple then $\text {rank} (G)\leq 2$.
Otherwise $G$ would have
 a codimension $1$ (abelian) subgroup (which must be normal).

\noindent  If $G$ is simple, then $\text {rank}(G)> 1$.
Otherwise, $G= \text {Spin}(3)$
which has dimension
$3$.

\noindent  Let  $G$ be  simple and $\text {rank} (G) =  2$. Then the
${Ad}$-representation
of $G$ on $\mathfrak {g} = \text {Lie} (G)$ must have a focal (non-trivial)
orbit $G.v$. Such an orbit must have codimension $3$. The $3$-dimensional
 normal space
$\nu _v(G.v)$
  is Lie triple system, since it coincides with the commutator of $v$.
Then
 $\nu _v(G.v)$ is an ideal of
$\mathfrak {g}$.  A contradiction.

\end {nota}

\,

\begin {nota} \label {5-symmetric}
Let $X = G/K$ be an irreducible
simply connected symmetric space of the
non-compact type and rank at least $2$, where $G$ is the
connected component of the full isometry
group of $X$.  Assume that the dimension of $X$ is at most $5$.
Then, $X\simeq  \text {SL}(3)/\text {SO}(3)$.

We will next outline
a classification free proof of this fact.

 Observe,
 since $\text {rank} (X)\geq 2$, that the isotropy representation
of $K$ on $T_pX$ has a non-trivial focal orbit $M = K.v$
($p= [e]$). Such an orbit $M$ must have dimension $2$.
In fact, $M$ cannot have dimension $3$. Otherwise, a principal
$K$-orbit must have  dimension $4$ and so $K$ would act transitively
on the sphere. Observe also that the dimension of $M$ cannot be $1$.
In fact, since $K$ acts irreducibly on $T_pX$, then $K$ acts effectively on
any non trivial orbit. If $\text {dim}(M)=1$,
then $\text {dim}(K) =1$. Then, since
$\text {dim}(X) >2$,  $K$ does not act irreducibly on $T_pX$.
A contradiction.

Observe that the isotropy $K_v$ of the focal orbit $M^2 = K.v$ at
$v$ must have positive dimension (and so
$\text {dim}(K_v) = 1$). Moreover,
since $M$ is not a principal orbit,
 the image under  the slice representation
of $K_v$ is not trivial. So, by Corollary \ref {slice holonomy},
  the restricted normal holonomy group
$\Phi (v)$  of $M$ at $v$ is not trivial.
Then $\Phi (v)$ must act irreducibly on the $2$-dimensional space
$\bar {\nu}_v(M)  =
\{v\}^\perp \cap  {\nu}_v(M)$.
Observe that the codimension of $M^2$ is $3= \frac {1}{2} 2 (2 +1)$
 Then, by Proposition
\ref {n-Veronese}, $M$ is a Veronese submanifold, i.e. orthogonally equivalent
to a Veronese-type orbit $V^2$ of
$\text {SO}(3)$ on $Sim _0 (3)$ (the action is by conjugation).
So, may assume that
$Sim _0 (3) = T_pX$ and that
$M = V^2$. Then both $K$ and $\text {SO}(3)$ are Lie subgroups of
$\tilde {K} = \{ g \in \text {SO}(Sim _0 (3)): g.M = M\}$.
Observe that $\tilde {K}$ is not transitive on the unit sphere
of $Sim _0 (3)$ since the codimension of $M$ is $3$.
Let  $R'$ and $R$ be the  curvature tensors at $p=[e]$ of
$X$ and $\text {SL}(3)/\text {SO}(3)$. Then we have the following
irreducible
non-transitive holonomy systems:
$[Sim _ 0(3), R, \tilde {K}] \text { \ and \ }
[Sim _ 0(3), R', \tilde {K}]$.

Then by the holonomy theorem of Simons \ref {Simons}, $R$ is unique
up to scalar multiple and $\tilde {K} = K = \text {SO}(3)$,
since its Lie algebra is spaned by $R$.
This implies that the symmetric space $X$ is homothetical to
$\text {SL}(3)/\text {SO}(3)$.

\end {nota}

\begin {nota}\label {3-higher}

Let $M^3 = K.v \subset \mathbb {R}^N$
be a $3$-dimensional full and irreducible
 homogeneous (Euclidean) submanifold.
Assume that $\text {rank} (M)\geq 2$. In this case, by the
rank rigidity theorem,  $M$ is
an orbit of an $s$-representation. So, we may assume, that $K$-acts as
an $s$-representation.

Let $\xi$
be a $K$-invariant
parallel normal field to $M$ which is
not umbilical. If the shape operator $A_\xi$ has two different
(constant)
eigenvalues then its associated eigendistributions, let us say
$E_1$ and $E_2$ are autoparallel distributions that are invariant
under the shape operators of $M$ (recall that $A_\xi$ commutes with any
other shape operator due to Ricci equality). Then,  by the so-called Moore's
lemma \cite {BCO}, Lemma 2.7.1, $M$ is product of submanifolds. A contradiction.

If $A_\xi$ has three eigenvalues, then the multiplicities of any of them
are $1$. Since  $A_\xi$ commutes with any
other shape operator, all shape operators of $M$ must commute. Then, by the
Ricci identity, $M$ has flat normal bundle. Then $M$ is isoparametric, since it
is an orbit of an $s$-representation.

Therefore, a full irreducible and homogeneous Euclidean $3$-dimensional
submanifold $M^3$,
of higher rank, must be isoparametric with exactly
three curvature normals.
This implies that the irreducible Coxeter group associated
to $M$ \cite {Te, PT} has exactly
three reflection hyperplanes.
This is only possible if the dimension of the normal space is $2$.
Otherwise, the curvature normals must be
 mutually perpendicular and hence
$M$ would be  a product of circles.

This implies that $N=5$ and that
 $M$ is an isoparametric hypersurface of the sphere $S^4$.
Moreover, from Remark \ref {5-symmetric}, $M$ is a principal orbit
of the isotropy representation of $\text {Sl}(3)/\text {SO}(3)$.
\qed
\end {nota}

{\bf Proof of Theorem A. } If
$M^n$ is a (full) Veronese submanifold, $n\geq 3$, then
the normal holonomy, as a submanifold of the sphere,
 acts  irreducibly
 and non-transitively
(see Facts \ref {Veronese facts}, (iii)).

For the  converse observe that $M$ must be
a full and irreducible Euclidean
submanifold, since the normal holonomy group (as a submanifold
of the sphere)
 acts irreducibly (see the beginning of Section 3). Then, from
 Theorem \ref {n at least 3},
Theorem \ref   {3-Veronese}  and
Proposition \ref {n-Veronese}, $M$ is a Veronese submanifold.
\qed

\

{\bf Proof of Theorem B.}
From
Theorem \ref {3-Veronese} $M$ is an orbit of an $s$-representation.
  Assume that $\text {rank} (M) =1$.
Then, by Lemma \ref {aux}, the (restricted) normal holonomy group of $M$,
as a submanifold of the sphere,
 acts irreducibly and $N = 9 = 3 + \frac 1 2 3 (3+1)$.
Then, by Proposition \ref {n-Veronese}, $M$ is a Veronese submanifold.

If $M$ is of higher rank, then, by  Remark \ref {3-higher},
$M$ is a principal orbit
of the isotropy representation of $\text {Sl}(3)/\text {SO}(3)$.
\qed

\

\section {minimal submanifolds  with non-transitive normal holonomy}

In this section we prove Theorem C of the Introduction.

We use many of the ideas used for the homogeneous case, when
$n>3$. But now the situation is much more simple, for $n=3$.

\

{\it Proof of Theorem C.} Observe that $M$ must be full and irreducible
as a Euclidean submanifold (since the normal holonomy group,
as a submanifold of the sphere, acts irreducibly; see Section 3).
Note, by the minimality,
 that the traceless shape opertator coincides
with the shape operator (of vectors which are perpendicular to the
position vector).

We keep the notation in the proof of Theorem \ref {n at least 3}.

Let $p\in M$ be such that the adapted normal curvature tensor
$\mathcal {R}^\perp (p) \neq 0$,  or equivalently,
$R^\perp (p) \neq 0$. Let us consider the irreducible and
non-transitive holonomy systems
$[\bar {\nu}_p (M), \mathcal {R}^\perp (p), \Phi (p)] $
 and $[Sim _0 (T_pM), R , \text {SO}(T_pM)]$.

We have,
 from formula (****) of Section 3 and
 Proposition \ref {holonomy systems},
 that the shape operator at $p$, $A^p: \bar {\nu}_p(M) \to
Sim _0 (T_pM)$ is a homothecy and
$A^p\Phi (p)(A^p)^{-1} = \text {SO}(T_pM)$.
This implies, if $\phi \in \Phi (p)$,
 that the eigenvalues of $A^p_{\eta}$ coincide with
 the eigenvalues of  $A^p_{\phi (\eta)}$.

Let $U$ be a contractible neighbourhood of
$p$ in $M$ such that $\mathcal {R}^\perp $ never vanishes on $U$.

Let now $p'\in U$ be  arbitrary
and let $\gamma : [0,1] \to U$ be a
piece-wise differentiable
 curve from
$p$ to $p'$. Let $\tau _t $ be the $\nabla ^\perp$-parallel
transport along $\gamma _{[0,t]}$.

We have that $\tau _t \Phi (p)(\tau _t )^{-1}
=    \Phi (\gamma (t))   $.

Let us choose $\xi \in \bar {\nu}_p(M)$ such that $A^p_\xi
\in Sim _0 (T_pM)$ has  exactly two eigenvalues $\lambda _ 1 =
\frac {1}{2}$ of multiplicity $2$  and $\lambda _ 2= -\frac {1}{(n-2)}$
of multiplicity
$(n-2)$.

Recall that  the shape operator $A^{\gamma (t)}:
\bar {\nu }_{\gamma (t)} \to Sim _0 (T_{\gamma (t)})$ maps
$\Phi (\gamma (t))$ into $\text {SO}(T_{\gamma (t)})$.
Then, the homothecy
$g_t: = A^{\gamma (t)}\circ
\tau _t \circ (A^p)^{-1} :
Sim _0 (T_p M) \to Sim _0 (T_{\gamma (t)} M)$
maps the group
$\text {SO}(T_pM)$ into
$\text {SO}(T_{\gamma (t)}M)$. Then
$g_t$ maps the isotropy subgroup $\text {SO}(T_pM)_{A^p_\xi}
\simeq  S(\text {O}(2)\times \text {O}(n-2))$ into the isotropy
subgroup $\text {SO}(T_{\gamma (t)}M)_ {B(t)}$,
where $B(t) = A^{\gamma (t)}_{\tau _t (\xi)}$.
This implies, as it is not difficult to see,
 that $B(t)$ has two eigenvalues, let us say
$\lambda_1 ^t $ of multiplicity $2$ and
$\lambda _2 ^t$ of multiplicity
$n-2$. Since $B(t)\in Sim _0 (T_{\gamma (t)})$,
$\lambda _2 ^t = - \frac {2}{n-2}\lambda _1 ^t$.

Then the two eigenvalues of ${B(t)}$ are
constant up to the multiplication by  $a(t) = \lambda _1^t \neq 0$.
 Note, if $\gamma $ is a loop by $p$, that $\tau _1 \in
\Phi (p)$. Then, as we have previously observed, the eigenvalues
of $A^p_{\xi} $ are the same as those of ${B(1)}$.
Then $a(t)$ depends only on $\gamma (t)$. So there is
a non-vanishing
$f: U\to \mathbb {R}$ such that $a(t) = f(\gamma (t))$. It is
standard to show that $f$ must be $C^\infty$.
Note that $f(p) =
\frac {1}{2}$.

Let us consider (eventually,  by making $U$  smaller) the holonomy
tube $U_{\xi}$. We use the notation in the proof to Theorem \ref
{n at least 3}. We will modify  the arguments in this proof.

We have the parallel normal field $\hat {\xi}$ of $U_{\xi}$.
The eigenvalues of the   shape operator
$\hat {A}_{\hat {\xi}}$   at $q\in U_{\xi}$
are given by

$$\hat {\lambda} _1 (q) =
\frac {f(\pi (q))}{1 - f(\pi (q))} $$
associated to the (horizontal) eigendistribution $E_1$
of dimension $2$

$$\hat {\lambda} _2 (q) =
\frac {-\frac {f(\pi (q))}{n-2}}
{1 + \frac {f(\pi (q))}{n-2}}$$
associated to the (horizontal)
eigendistribution $E_2$
of dimension $n-2$.

The third eigenvalue of $\hat {A}_{\hat {\xi}}$, is
$\hat {\lambda}_3 = -1$, associated to the vertical
distribution $\nu$, tangent to the normal holonomy orbits.

By the Dupin condition, $\text {d}(\hat {\lambda}_1)( E_1) = 0 $
which implies that
  $$\text {d}(f\circ \pi) (E_1 ) = 0 \ \ \ \ \ \text {(J)}$$
 If $n>3$ this is
also true for the eigendistribution $E_2$, since it has dimension at
least $2$. But we will not assume this and the proof will also work
for $n=3$.

From the tube formula, as we have observed in the proof of
Theorem \ref {3-Veronese}, Case (2), (b), $\text {d} \pi (E_1 (q)) =
E_1 (q)$, as linear subspaces. Moreover,
$E_1(q)$ is an eigenspace of $A_{q-\pi (q)} = A_{\hat {\xi}(q)}$,
 where $A$ is the shape operator of
$M$ (we drop  the supra-index $\pi (q)$ of $A$). Let now
$q \in U_\xi$ with $\pi (q) = p$ and let $\mathbb {V}$ be the
subspace of $T_pM$ which is generated by
$E_1(q')$, with
$q' \in
\Phi (p).q = (\pi ^{-1}(\{p\}))_q$. If $\mathbb {V} = T_pM$,  then, from
formula (J), $\text {d}f(T_pM) = \{0\}$. If $\mathbb {V}$ is properly
contained in $T_pM$, then let $0\neq v \in \mathbb {V}^\perp$. We will derive,
in this case, a contradiction. In fact, since any shape operator
$A_{q'-p}$ has only two eigenvalues and $v$ is perpendicular to the eigenspace
$E_1 (q')$ of $A_{q'-p}$, then $v$ is an eigenvector of this shape operator, for any
$q'\in \Phi (p).q$. Observe that the linear span of
$\Phi (p).q$ is $\bar {\nu}_p (M)$, since $q'\neq 0$ and
$\Phi (p)$ acts irreducibly
on this normal space. Then $v$ is a common eigenvector for all shape operators
$A_\eta$, $\eta \in \nu _p (M)$. But $A: \bar {\nu}_p(M) \to
Sim _0(T_pM)$ is an isomorphism. This is a contradiction.
Then $\text {d}f(T_pM) = \{0\}$ and the same is valid for all
$p'\in U$. Then $f = f (p) = \frac {1}{2}$ is constant on $U$.

 Then the eigenvalues $\hat {\lambda }_1, \hat {\lambda }_2 ,
\hat {\lambda }_3$ are constant on $U_{\xi}$.  Then $\hat {\xi}$ is a
(non-umbilical) parallel
normal isoparametric field  of $U_{\xi}$. Then, by \cite {CO} (see
\cite {BCO}, Theorem 5.5.2),  $U_\xi$ and hence $U$ has
constant principal curvatures. But this is true provided one shows that
$U_{\xi}$ is full and locally irreducible around some point
$q\in \pi^
{-1}(\{p\})$.

\noindent Let us show  that the local normal holonomy group of
$M$ at $p$ coincides with the restricted normal holonomy group.
In fact, the holonomy system
$[\bar {\nu}_p (M), \mathcal {R}^\perp (p), \Phi (p)] $
is irreducible and non-transitive. Then,
by the holonomy  theorem of Simons \cite {S}, it is symmetric. Moreover,
$\text {Lie}(\Phi (p))$ is linearly generated by the
endomorphisms $\{\mathcal {R}_{\xi , \eta} ^\perp (p)\}$.
This implies that the local normal holonomy at $p$ coincides with
$\Phi (p)$. Then the local rank of $M$, as submanifold of the Euclidean
space, is $1$.
This implies that $M$ is full and locally irreducible around
$p$. Hence  $U_{\xi}$ is full and irreducible around
any point
$q\in \pi ^
{-1}(\{p\})$.
Then $U$ is a submanifold with constant principal curvatures.

Since the normal holonomy of $M$ is not transitive on the unit sphere,
of the normal space to the sphere, any principal holonomy tube
(which is isoparametric)
has codimension at least $3$ in the Euclidean space.
 Then, by the theorem of
Thorbergsson \cite {Th}, $U$ is locally an orbit of an $s$-representation.
Then $\Vert \mathcal {R}^\perp\Vert$ is constant on $U$. From this one
obtains that $\Vert \mathcal {R}^\perp\Vert$ is constant on $\Omega$, where
$\Omega$ is a
connected component
of the open subset $ \{p\in M: \mathcal {R}^\perp(p)  \neq 0\}$.
  But if $p'\in M$ is a limit point of $\Omega$ then,
$\mathcal {R}^\perp(p')  \neq 0$. This implies  that $\Omega $ can be enlarged
unless $p'\in \Omega$. This shows that the open subset $\Omega $ is also
closed in $M$.
Then $M$ has constant principal curvatures. Hence,
the image of $M$ (under the isometric immersion), is an embedded
submanifold with constant principal curvatures. Moreover, it is
 an orbit of an $s$-representation. From Proposition \ref {n-Veronese},
the image of $M$ is a Veronese submanifold.

The converse is true by Facts \ref {Veronese facts}, (i) and (iii).
\qed

\begin {nota}\label {itoh ogiue}

We keep the notation of the proof of Theorem C.
The fact that $f$ is constant can also be proved
in the following way. Let $p\in M$ be such that
$\mathcal {R}^\perp (p)\neq 0$. Then, since
the shape operator $A$ maps $\Phi (p)$-orbits
into $\text {SO}(T_pM)$-orbits of
$Sim _0 (T_pM)$, one obtains that
the second fundamental form is $\lambda$-isotropic.
That is, the length  of $\alpha (X,X)$ is $\lambda (p)$
independent
of $X$ in the unit sphere of $T_pM$, where $\alpha$ is
the second fundamental form. The function $\lambda$ must
be a constant multiple of $f$. Then, by Proposition
4.1. of \cite {IO}, $\lambda$, and hence $f$, must be
constant ($n\geq 3$).
\end {nota}

\,

\section {The number of
factors of the normal holonomy}

In this section we will prove a sharp linear bound, depending
on the dimension $n$ of the submanifold, of the number
of irreducible factors of the local normal holonomy
representations. This improves, substantially, the
quadratic bound
 $\frac {1}{2}n(n-1)$ given in
Theorem 4.5.1 of \cite {BCO}.

\begin {prop}\label {cota} Let $M^n$ be a submanifold of the
Euclidean space $\mathbb {R}^N$. Assume that at any point of $M$ the local
normal holonomy group and the restricted normal holonomy group coincide
(or, equivalently, the dimensions
of the local normal holonomy groups are constant on $M$).
 Let  $p\in M$ and let
 $r$ be the number of irreducible (non-abelian)
subspaces  of the representation of the restricted  normal holonomy
group $\Phi (p)$ on $\nu _p (M)$.
 Then $r\leq \frac {n}{2}$.
 Moreover, this bound is sharp for all $n\in \mathbb {N}$
(also in the class of irreducible  submanifolds).
\end {prop}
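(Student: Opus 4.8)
The plan is to work entirely at the fixed point $p$ and to produce, inside $\mathfrak{so}(T_pM)\cong\mathfrak{so}(n)$, a family of $r$ mutually commuting, linearly independent skew-symmetric endomorphisms, one attached to each non-abelian factor. Since $\mathfrak{so}(n)$ is compact, such a family generates an abelian subalgebra and therefore lies in a common maximal torus, whose dimension is exactly $\operatorname{rank}\mathfrak{so}(n)=[\frac n2]$; this forces $r\le[\frac n2]\le\frac n2$. First I would set up the decomposition: by the normal holonomy theorem \cite{O1}, $\Phi(p)$ acts on $\nu_p(M)$ as an $s$-representation up to a trivial summand, so I write $\nu_p(M)=V_0\oplus V_1\oplus\cdots\oplus V_r$ with $V_0$ the fixed subspace and $V_1,\dots,V_r$ the irreducible non-abelian factors, and let $\mathfrak h_i\subset\mathfrak k=\operatorname{Lie}\Phi(p)$ be the corresponding ideals. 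Here the hypothesis that the local and restricted normal holonomy coincide is essential: it guarantees that the holonomy system $[\nu_p(M),\mathcal R^\perp,\Phi(p)]$ is symmetric (Theorem \ref{Simons}) and that $\mathfrak k$ is already spanned by the curvature endomorphisms $\mathcal R^\perp_{\xi,\eta}$ at the single point $p$. Consequently $\mathfrak g=\mathfrak k\oplus\nu_p(M)$ is a product of symmetric pairs, so $\mathcal R^\perp_{\xi,\eta}=0$ whenever $\xi,\eta$ lie in different factors, and each $\mathfrak h_i$ is spanned by $\{\mathcal R^\perp_{\xi,\eta}:\xi,\eta\in V_i\}$, hence non-zero.

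The crucial computation uses formula (**) of Section 1.1, namely $\langle\mathcal R^\perp_{\xi_1,\xi_2}\xi_3,\xi_4\rangle=\langle[A_{\xi_1},A_{\xi_2}],[A_{\xi_3},A_{\xi_4}]\rangle$. For $\xi\in V_i$ and $\eta\in V_j$ with $i\ne j$ one has $\mathcal R^\perp_{\xi,\eta}=0$, so taking $(\xi_3,\xi_4)=(\xi,\eta)$ gives $\|[A_\xi,A_\eta]\|^2=\langle\mathcal R^\perp_{\xi,\eta}\xi,\eta\rangle=0$. Thus \emph{shape operators coming from different holonomy factors commute}. Setting $W_i=\operatorname{span}\{[A_\xi,A_{\xi'}]:\xi,\xi'\in V_i\}\subset\mathfrak{so}(n)$, the same formula, together with the fact that $\mathfrak h_i$ annihilates $V_j$, shows the $W_i$ are mutually orthogonal; and each $W_i$ is non-zero, since $\mathfrak h_i\ne0$ forces some $[A_\xi,A_{\xi'}]\ne0$ (otherwise $\mathcal R^\perp|_{V_i}=0$).

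A double application of the Jacobi identity then propagates the commuting property from the shape operators to the $W_i$: for $\xi,\xi'\in V_i$ and $\eta\in V_j$ ($i\ne j$) one gets $[[A_\xi,A_{\xi'}],A_\eta]=[[A_\xi,A_\eta],A_{\xi'}]+[A_\xi,[A_{\xi'},A_\eta]]=0$, and hence $[[A_\xi,A_{\xi'}],[A_\eta,A_{\eta'}]]=0$. Therefore the subspaces $W_1,\dots,W_r$ commute pairwise. Choosing $0\ne t_i\in W_i$, the elements $t_1,\dots,t_r$ are pairwise orthogonal, hence linearly independent, and pairwise commuting; they thus generate an abelian subalgebra of the compact algebra $\mathfrak{so}(n)$, which is contained in a maximal torus of dimension $[\frac n2]$. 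This gives $r\le[\frac n2]$, which is the inequality.

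For sharpness I would exhibit, for every $n$, a submanifold attaining $r=[\frac n2]$, realized as an orbit $M=K.v$ of a suitable $s$-representation so that, by Theorem \ref{slice holonomy}, its normal holonomy is the slice representation of $K_v$; the representation is chosen so that $K_v$ carries $[\frac n2]$ simple non-abelian factors acting on mutually orthogonal normal subspaces, placing the associated $t_i$ in orthogonal coordinate $2$-planes of a maximal torus of $\mathfrak{so}(n)$. I expect the genuine obstacle to lie here rather than in the inequality: the delicate point is to produce such an example inside the class of \emph{irreducible} submanifolds (reducible products being immediate), which I would arrange by taking a single group $K$ acting irreducibly on the ambient space while still inducing the desired splitting of the slice representation.
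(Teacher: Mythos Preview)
Your approach to the inequality is essentially the paper's: produce, for each irreducible factor $V_i$, a nonzero commutator $[A_{\xi_i},A_{\xi'_i}]\in\mathfrak{so}(T_pM)$ and observe that these $r$ skew-symmetric endomorphisms are linearly independent and pairwise commuting, hence $r\le\operatorname{rank}\mathfrak{so}(n)=[\tfrac n2]$. The Jacobi-identity computation and the use of (**) to show $[A_\xi,A_\eta]=0$ for $\xi,\eta$ in different factors are exactly right.

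There is, however, one genuine gap. You assert that the hypothesis ``local normal holonomy $=$ restricted normal holonomy'' guarantees that $\mathfrak k=\operatorname{Lie}\Phi(p)$ is spanned by the values $\mathcal R^\perp_{\xi,\eta}$ \emph{at the single point $p$}, and then deduce $W_i\ne0$ from $\mathfrak h_i\ne0$. This inference is not valid: the hypothesis only says that the dimension of the local holonomy is constant, and the local holonomy algebra at $p$ is generated (via Ambrose--Singer) by parallel transports of $\mathcal R^\perp$ from \emph{nearby} points, not by $\mathcal R^\perp(p)$ alone. It is perfectly possible that $\mathcal R^\perp(p)$ vanishes on some factor $V_i$ while $\mathfrak h_i\ne0$. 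The paper handles exactly this: using the hypothesis it extends the decomposition to parallel subbundles $\nu^i(M)$, and then argues that for each $i$ the bundle $\nu^i(M)$ is not flat, so there is a point where its curvature is nonzero; iterating over the factors produces a single point $q$ (as close to $p$ as one likes) with $\mathcal R^\perp_{\nu_q^i,\nu_q^i}\ne\{0\}$ for \emph{all} $i$ simultaneously. One then runs your argument at $q$ rather than at $p$; since $r$ is the same at every point, this suffices. So the fix is short, but the step cannot be skipped.

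On sharpness, your instinct that the reducible case is easy and the irreducible case is the issue is correct, but the mechanism you propose (a single $s$-representation whose slice representation has $[\tfrac n2]$ simple factors) is more delicate than necessary. The paper's device is much simpler: take an extrinsic product of $d$ copies of the Veronese surface $V^2$ (and one $V^3$ if $n$ is odd), which sits in a sphere and has exactly $[\tfrac n2]$ irreducible normal-holonomy factors; then apply a generic conformal transformation of that sphere. Normal holonomy is a conformal invariant, so the number of factors is preserved, while the image is now an irreducible Riemannian submanifold.
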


\begin {proof}

Let us decompose
$\nu _p (M) = \nu _p^0(M)  \oplus \nu _p^ 1(M) \oplus ...
\oplus \nu _p^r (M)$, where $\Phi (p)$ acts trivially
on $\nu _p^0 (M)$ and irreducibly on $\nu _p^i (M)$,
for $i=1, ... , r$. From the assumptions we obtain that
  $\nu _p^i (M)$ extends to
a $\nabla ^\perp$-parallel subbundle $\nu ^i$
of the normal bundle
$\nu (M)$, $i=0, ..., r$ (eventually, by making $M$ smaller around $p$).
Note that we have the decomposition
$\nu (M) = \nu ^0(M)  \oplus \nu ^ 1 (M) \oplus ...
\oplus \nu ^r (M)$. Moreover, we obtain from  the
assumptions, for any $q\in M$, that  the
local normal holonomy group $\Phi (q)$ acts
 trivially  on $\nu _q ^0(M)$  and irreducibly on
 $\nu _q ^i (M)$, for any $i= 1, ..., r$.

Let $\mathcal {R}^\perp_{\xi  , \xi ' }$ be the adapted
normal curvature tensor (see Section 1.1).
From the expression of $\mathcal {R}^\perp$ in terms of
shape operators $A$, one has that
$\mathcal {R}^\perp_{\xi  , \xi ' } =0 $
if and only if $[A_{\xi }, A_{\xi '}] = 0$.

 Observe, if $i\neq j$,  that
$ \mathcal {R}^\perp_{\xi _i  , \xi ' _j } = 0$ if
$\xi _i , \xi ' _j$ are  normal sections that lie in $\nu ^i(M)$ and
 $\nu ^j(M)$, respectively.

There must exist $q\in M$, arbitrary close to $p$, such that
$ \mathcal {R}^\perp_{ \nu _q ^i , \nu _q ^i } \neq \{ 0\}$,
for all $i=1, ..., r$. In fact, there exists $q_1 \in M$,
arbitrary close to $p$ such that
$ \mathcal {R}^\perp_{ \nu _{q_1} ^1 , \nu _{q_1} ^1 } \neq \{ 0\}$
(otherwise, $\nu ^1 (M)$ would be flat). The above inequality must be
true in a neighbourhood $V_1$ of $q_1$. Now choose $q_2 \in V_1$ such
that $ \mathcal {R}^\perp_{ \nu _{q_2} ^2 , \nu _{q_2} ^2 } \neq \{ 0\}$.
Continuing with this procedure we find $q : = q_r$ with
the desired properties.

Let us show that  for any
$i = 1, .... , r$ there exist $\xi _i , \xi '_i$  en $\nu _q ^i(M)$
such that
$[A_{\xi _i} , A_{\xi '_i}]$ does not belong to the algebra
of endomorphisms
generated by
$\{A_{\eta ^i}\}$, where $\eta ^i \in \nu _q (M)$ has no component in
$\nu _q^i (M)$. In fact, if this is not true, then,
 for any  $\xi _i , \xi '_i$  in $\nu _q ^i(M)$,
  $[A_{\xi _i} , A_{\xi '_i}]$ commutes with $A_{\xi _i}$
(since the shape operators of elements of the subspaces $\nu _q^j(M)$
commute with  con
$A_{\xi _i}$, if $j\neq i$). Then
$$\langle [ [A_{\xi _i} , A_{\xi '_i}], A_{\xi _i}] , A_{\xi '_i}\rangle = 0
  =-\langle  [A_{\xi _i} , A_{\xi '_i}], [A_{\xi _i} , A_{\xi '_i}] \rangle$$
and hence   $ [A_{\xi _i} , A_{\xi '_i}] = 0$.
A contradiction, since
$ \mathcal {R}^\perp_{ \nu _q ^i , \nu _q ^i } \neq \{ 0\}$.
This proves our assertion.

Observe that
$[A_{\xi _1} , A_{\xi '_1}], ... ,  [A_{\xi _r} , A_{\xi '_r}] $
are linearly independent and commuting
skew-symmetric endomorphisms of $T_qM$.
Then $r\leq rank (\text {SO}(T_pM)) = [\frac {n}{2}]$
(the integer part of $\frac n 2$).
This proves the inequality.

Let us see that it is sharp.
For $M^2 \subset S^{k_1 -1}  , \bar {M}^3 \subset S^{k_2-1}$
 be a surface and a  $3$-dimensional submanifold and such that
the  normal holonomies have one irreducible factor (for example, the Veronese
$V^2$ and $V^3$).
Let $n >3$ and write $n = 2d $ is $n$ is even  or $n = 2d +3 $ if $n$ is odd.

Let $M^n$ be the product of $d$ times $M^2$ or  $M^n$ be the product of $d$ times
$M^2$ by $\bar {M}^3$. Such submanifolds are contained in the product of
Euclidean ambient spaces.
Moreover, the number of irreducible factors of the normal holonomy
group (representation) of $M^n$
is exactly the upper bound $[\frac {n}{2}]$.
Moreover, since $M^n$ is contained in a sphere, we can apply
to $M^n$ a conformal transformation of the sphere
(the normal holonomy group is a conformal invariant) in such a way that
$M^n$ is an irreducible (Riemannian) submanifold of the Euclidean space.

 \end {proof}

\,

\section  {Further comments}

\begin {nota} \label {LP}  There is a beautiful  result of
Little and Phol  \cite {LP} which
characterizes Veronese
submanifolds $M^n$, modulo projective diffeomorphisms,
 by the {\it two-piece property}  and
the fact that the codimension is the maximal one
$\frac 1 2n(n+1)$ (for submanifolds with the two-piece
property). Note that a tight submanifold has the two-piece
property. This result generalizes the well-know result of
Kuiper for $n=2$. A projective transformation, in general,
does not preserve the normal holonomy (unless it induces a conformal
transformation of the ambient sphere).

\end {nota}

\,

\begin {nota}
\label {conformal}  A natural question that arises, since the normal
holonomy group is a conformal invariant, is the following:
{\it is a compact submanifold $M^ n \subset S^ {n-1 + \frac 1 2 n(n+1)}$,
with irreducible and non-transitive (restricted) normal holonomy,  equivalent,
modulo conformal transformations of the
sphere, to a Veronese  submanifold?.}
\end {nota}

\,

\begin {nota}\label {examples} The symmetric space
$X = \text {SU}(4)/\text {SO(4)}$,
dual to $\text {Sl}(4)/\text {SO(4)}$, is isometric to
the Grassmannian  $\text {SO(6)}/\text {SO(3)}\times \text {SO(3)}$.
In this last model, $T_{[e]}X = \mathbb {R}^{3\times 3}$ and the
 isotropy representation is given by  $(g,h).T = gT h^{-1}$,
$(g,h) \in \text {SO(3)}\times \text {SO(3)}$. The Veronese
submanifold $V^3$ is given by
$$\text {SO(3)}\times \text {SO(3)}.Id =
\text {SO(3)}\times \{Id\}.Id = \text {SO}(3) \subset
\mathbb {R}^{3\times 3}$$
Thus  $V^3$ is also
an orbit of the
smaller group $\text {SO}(3) \simeq \text {SO(3)}\times \{Id\}$.
The other orbits  $\text {SO}(3).A$, where $A$ is invertible and
near $Id$, must be
full and irreducible submanifolds of
$\mathbb {R}^{3\times 3}$, since $V^3$ is so.
Note that the action of $\text {SO}(3)$ on
$\mathbb {R}^{3\times 3}$ is reducible. In fact, it is
the sum of three times the standard representation
of $\text {SO}(3)$ on $\mathbb {R}^3$.
The orbit, $\text {SO}(3).A$ is not minimal in the sphere, for
$A$ generic. So, the normal holonomy holonomy group of this orbit
must be transitive on the unit sphere (of the normal space to
the sphere).

Observe that the linear isomorphism $r_{A^{-1}}$ of
$\mathbb {R}^{3\times 3}$,
$r_{A^{-1}}(T) =  TA^{-1}$, transforms $\text {SO}(3).A$
into $V^3$. In particular, since $V^3$ is a
tight submanifold,
that orbit is so. Hence, as it is well known,
  $\text {SO}(3).A$\,  is a taut submanifold,
since it lies in a sphere (see \cite {CR, G}).

\end {nota}

\newpage

\section  {Appendix}

\subsection {The Veronese embedding}

\

We recall here some  basic definitions and
facts about the well-known
 Veronese submanifolds.

Let $S^n$, $n\geq 2$, be the unit sphere of the Euclidean space
$\mathbb {R}^{n+1}$ and let $\mathbb {R}^{n+1}
 {\otimes}_s
\mathbb {R}^{n+1}$ be space of  symmetric $2$-tensors of
$\mathbb {R}^{n+1}$. Let $h: \mathbb {R}^{n+1}
 {\otimes}_s
\mathbb {R}^{n+1} \to Sim (n +1)$ the usual isomorphism
onto the symmetric matrices of $\mathbb {R}^{n+1}$.
Namely, let  $e_1, ... , e_{n+1}$ be the canonical basis
of $\mathbb {R}^{n+1}$.  Then,
$h (e_i\otimes e_j +  e_j\otimes e_i) $ is the matrix
whose  coefficients $a_{k,l}$ are all zero except:
$$a_{i,j} = a _{j,i} = 1, \text { if } i\neq j; \ \ \
a_{i,i} = 2, \text { if } i= j$$
\indent The Veronese map $Q: S^n \to Sim (n + 1)$ is defined by
$$\text {Q}(v) = h(v\otimes v)$$
 Observe that $(\text {Q}(v))_{i,j} =
v_iv_j$, where $v = (v_1, ... , v_{n+1})$.
Let $\langle \, , \,\rangle$ be the inner
product on $Sim (n +1)$ given by $\langle A , B\rangle
= \frac 1 2  {trace}(AB)$.
Then $\text {Q}$ is an isometric immersion. Observe that
$ {trace}(\text {Q}(v)) = 1$, for all $v\in S^n$. So, the image
of $Q$ is contained in the affine hyperplane of $Sim (n +1)$, given by
the linear equation
$$\langle \, \cdot \,  , Id\rangle = \frac 1 2$$

 Let $\tilde {\rho} : S^n \to Sim _0(n + 1)$ be defined by
$\tilde {\rho} (v) = Q (v) - \frac {1}{n+1}Id$, where
 $Sim _0(n + 1)$ are  the symmetric traceless matrices.
The map $\tilde {\rho}$ is called  the {\it Veronese
Riemannian immersion} of the
sphere $S^n$ into
$Sim _0(n + 1)$. One has that $\tilde {\rho}$,
 (as well as $\text {Q}$)
is $\text {O}(n+1)$-equivariant.
Namely, if $g\in \text {O}(n+1)$, then
$$\tilde {\rho} (g.v) = g.\tilde {\rho}(v).g^{-1}$$
 In fact, if we regard
$v\in \mathbb{R}^{n+1}$ as a column vector, then
$$\tilde {\rho} (v) =
v.v^t  - \frac {1}{n+1}Id$$

From the above formula it  follows easily the $\text {O}(n +1)$-equivariance of
$\tilde {\rho}$. It is also not difficult to verify, as it is well known,
that $\tilde {\rho}(v) = \tilde {\rho}(w)$ if and only if $w=\pm v$. Therefore,
$\tilde {\rho}$ projects down to an isometric $\text {O}(n+1)$-equivariant
embedding  $\rho : \mathbb {R}P^n \to Sim _0(n+1)$, the so-called
{\it Veronese Riemannian embedding}.
\

Let us consider the  simple symmetric  pair
$(\text {Sl}(n+1), \text {SO}(n+1))$ of the non-compact type.
The Cartan decomposition associated to such a pair is
$$\mathfrak {sl}(n+1) = \mathfrak {so}(n+1) \oplus Sim _0(n+1)$$
Then
the (irreducible) isotropy representation of
$X =  \text {Sl}(n+1)/ \text {SO}(n+1)$ is naturally identified with the
action, by conjugation,  of $\text {SO}(n+1)$ on $Sim _0(n+1)$.
Then, the image of the Veronese embedding, is the orbit
$$M = \text {SO}(n+1).S$$
where $S\in Sim_0(n+1)$ is the diagonal matrix with exactly two eigenvalues. Namely,
$1-\frac {1}{n+1}$ and $-\frac {1}{n+1}$. The first one,
with multiplicity $1$,  is associated to the
eigenspace $\mathbb {R}e_1$ and the second one,
with multiplicity $n$, is associated to the eigenspace
$(\mathbb {R}e_1)^\perp$.

Let $S'\in Sim_0(n+1)$ with exactly two eigenvalues $\lambda _1$ of multiplicity
$1$ and $\lambda _2$ with multiplicity $n$. Assume that
$\Vert S'\Vert =  \Vert S \Vert $ (i.e. $S$ and $S'$ have the same length).
It is easy to verify that either $\lambda _1 = 1-\frac {1}{n+1}, \lambda _2
= -\frac {1}{n+1}$ or $\lambda _1 = -1+\frac {1}{n+1}, \lambda _2
= \frac {1}{n+1}$. In the first case one has that
$S'\in S0(n+1).S = \rho (\mathbb {R}P^n)$. In the second case,
$-S'\in S0(n+1).S$.

Observe that $S'$ and $-S'$ cannot be both in the
image of the Veronese embedding, since the
respective eigenvalues of multiplicity
$1$ are different.
In general, if $\bar {S}\in Sim _0 (n+1)$ has
two different eigenvalues, one of multiplicity $1$ and the other of multiplicity $n$,
then $\bar {S} = \lambda S$, for some $0\neq \lambda \in \mathbb {R}$.
The orbit $\text {SO}(n+1).\bar {S}$ is called
a {\it Veronese-type orbit} (see Section 1.1).
Observe that there are exactly two Veronese-type
orbits in  any given sphere, centered at $0$, of $Sim _0(n+1)$. Moreover,
any of these two Veronese-type
orbits is isometric to the other,
 via the isometry $-Id_{Sim _0(n+1)}$ of $Sim _0 (n+1)$.

We have the following well-known fact.

\begin {lema}\label {minima}

Let $\text {SO}(r )$ acts by conjugation on  $Sim _0(r)$, the
 traceless symmetric
 $r\times r$-matrices,  and let $M = \text {SO}(r).A$ be an orbit,
$A\neq 0$.
Then $r -1\leq \text {dim}(M)$. Moreover, the equality holds
 if and only if $M$ is an
orbit of Veronese-type.

\end {lema}

\begin {proof} Let us assume that $M$ has minimal dimension.
We will  first prove that $A$ has exactly two eigenvalues.
If not, let $\lambda _1 , ... , \lambda _d$ be the different eigenvalues
of $A$ with associated eigenspaces $E_1, ... , E_d$
($d\geq 3$).
Then the isotropy subgroup $\text {SO}(r)_A =
\text {S}(\text {SO}(E_1)\times ... \times \text {SO}({E_d}))$
has less dimension than
 $\text {S}(\text {SO}(E_1) \times \text {SO} ({E_2 \oplus ... \oplus E_d}))$,
which is the isotropy group of some $\neq A'\in Sim _0 (r)$ with
two different eigenvalues
whose associated eigenspaces are $E_1$ and $E_2 \oplus ... \oplus E_d$. Then
$\text {dim} (M) > \text {dim} (\text {SO}(r).A')$.
A contradiction. Therefore, $d=2$. (Observe that $d=1$ implies that $A = 0$, since
it is traceless).

Let now $k=\text {dim} (E_1)$ and so $r-k=\text {dim} (E_2)$.

We have  the well known formula  for the dimension of the Grassmannians,
 $$\text {dim} (M) =
\text {dim}(\text {SO}(r)) - \text {dim}(\text {SO}(k)) -
\text {dim}(\text {SO}(r -k))= k(r-k)$$
But the quadratic $q(x)= x(r-x)$,
$x\in [0,r]$,  is increasing in the
interval
$[0, r/2)$ and it is
decreasing  in $(r/2, r]$. So, the minimum  of $q$,
restricted
to the finite set $\{1, ... , r-1\}$ is attained at both, $x=1$ and $x=r-1$.
Then $k=1$ or $k=r-1$, in which case $M$ is a Veronese-type orbit (of dimension
$r-1$).

\end {proof}

\end {document}